\documentclass[aos]{imsart}

\RequirePackage{amsthm,amsmath,amsfonts,amssymb}
\RequirePackage[numbers]{natbib}
\RequirePackage[colorlinks,citecolor=blue,urlcolor=blue]{hyperref}
\RequirePackage{graphicx}
\RequirePackage{xcolor}
\RequirePackage{booktabs}
\RequirePackage{tabularx}

\makeatletter
\def\journal@name{}%
\def\journal@url{}%
\makeatother

\startlocaldefs
\theoremstyle{plain}
\newtheorem{theorem}{Theorem}[section]
\newtheorem{lemma}[theorem]{Lemma}
\newtheorem{corollary}[theorem]{Corollary}
\newtheorem{proposition}[theorem]{Proposition}
\theoremstyle{definition}
\newtheorem{assumption}{Assumption}
\newtheorem{definition}{Definition}[section]
\theoremstyle{definition}
\newtheorem{remark}{Remark}[section]
\renewcommand{\appendixname}{Supplement}
\endlocaldefs

\begin{document}

\begin{frontmatter}
\title{When Does Dynamic Preconditioning Preserve the Polyak--Ruppert CLT? A Stabilization Threshold}
\runtitle{Stabilization Threshold for Dynamically Preconditioned SGD}
\runauthor{S.\ An and X.\ Huo}

\begin{aug}
\author[A]{\fnms{Sunyoung}~\snm{An}\ead[label=e1]{san49@gatech.edu}}
\author[A]{\fnms{Xiaoming}~\snm{Huo}\ead[label=e2]{huo@gatech.edu}}

\address[A]{H.\ Milton Stewart School of Industrial and Systems Engineering,
Georgia Institute of Technology\printead[presep={,\ }]{e1,e2}}
\end{aug}

\begin{abstract}
Polyak--Ruppert averaging yields an asymptotically normal estimator
with sandwich covariance $H^{-1}SH^{-1}$, the foundation of online
inference.  When the gradient step is preconditioned by a data-driven
matrix $P_t$, we ask how fast
$P_t$ must stabilize for the central limit theorem (CLT) to remain valid.

We resolve this via an exact \emph{preconditioner-isolating
decomposition} of the averaged error that confines $P_t$ to a dynamic
remainder $R_n$, leaving the martingale and Taylor terms
preconditioner-free.  Let $M_t=(P_tH)^{-1}$ denote the effective inverse
drift matrix, with $\|M_t-M_{t-1}\|_{\mathrm{op}}\lesssim t^{-\beta}$
and step-size exponent $\alpha\in(1/2,1)$.  We identify a
stabilization-rate threshold $\beta>(\alpha+1)/2$ and prove that,
within the class of polynomial rate hypotheses used in our upper bound,
it cannot be weakened: the dynamic
remainder $\sqrt{n}\,R_n$ vanishes in $L^2$ whenever
$\beta>(\alpha+1)/2$, and we exhibit sequences satisfying those
hypotheses for which it does not vanish when
$\beta\le(\alpha+1)/2$.

A single stabilization argument certifies three SA
variants---SA-AdaGrad, SA-RMSProp, and SA-ONS---with gain $\rho_t=c/t$,
each delivering one-step $L^2(\mathrm{op})$ stabilization of order
$t^{-1}$, yielding the CLT
$\sqrt{n}(\overline{x}_n-x^*)\xrightarrow{d}\mathcal{N}(0,H^{-1}SH^{-1})$;
under bounded inputs the pathwise rate $\beta=1$ further preserves the
$n^{-1/6}$ Wasserstein rate at $\alpha^*=2/3$.
Under standard regularity conditions, Wald-type online inference
remains valid for dynamically preconditioned averaged SGD whose
stabilization rate exceeds the threshold.
\end{abstract}

\begin{keyword}[class=MSC]
\kwd[Primary ]{62L20}
\kwd{62F12}
\kwd[; secondary ]{60F05}
\end{keyword}

\begin{keyword}
\kwd{Polyak--Ruppert averaging}
\kwd{asymptotic normality}
\kwd{stochastic approximation}
\kwd{central limit theorem}
\kwd{stabilization threshold}
\end{keyword}

\end{frontmatter}

\section{Introduction}
\label{sec:intro}

The central limit theorem (CLT) is a foundation of statistical inference:
it provides the asymptotic distribution needed for confidence intervals,
hypothesis tests, and efficiency comparisons
\citep{lehmann1998theory,vaart1998asymptotic}.
For iterate-averaged stochastic gradient methods, it specifies both
a Gaussian limit and its sandwich covariance in a single theorem
statement.

This foundation now underpins inference in streaming and online
settings---online A/B testing, continual monitoring of treatment
effects, and streaming M-estimation, for example---where the estimator
is updated one observation at a time and inference must be performed
in real time.  A line of recent work develops online inference procedures
for averaged SGD
\citep{chen2020statistical,lee2021fast,zhu2023online}.  In practice,
one-pass stochastic optimization is routinely combined with
adaptive preconditioning, which improves computational efficiency and
is believed to sharpen the resulting Gaussian approximation in finite
samples.
If the CLT fails or the asymptotic variance is altered by the adaptive preconditioning,
all downstream inference---coverage of confidence intervals, size of
hypothesis tests, consistency of plug-in covariance estimators---is
compromised.
A rigorous understanding of \emph{when adaptive preconditioning
preserves the CLT} is, therefore, a prerequisite for reliable inference
in these settings.

We formalize the setting.  Let $x \in \mathbb{R}^d$ denote the parameter to be
estimated, let $\zeta$ be a random data observation, and let
$f(x,\zeta)$ be a loss function.  The population risk (objective function) is
\[
  F(x) \;:=\; \mathbb{E}_{\zeta}[f(x,\zeta)],
  \qquad
  \nabla F(x) = \mathbb{E}_{\zeta}[\nabla f(x,\zeta)],
\]
where the interchange of differentiation and expectation is assumed
throughout.  The
estimation target is the unique minimizer $x^* := \arg\min_{x\in\mathbb{R}^d} F(x)$,
satisfying $\nabla F(x^*) = 0$.  We denote the population Hessian by
$H := \nabla^2 F(x^*)$ and the gradient covariance at the minimizer by
$S := \mathbb{E}[\nabla f(x^*,\zeta)\nabla f(x^*,\zeta)^\top]$,
which coincides with $\mathrm{Cov}(\nabla f(x^*,\zeta))$ since
$\nabla F(x^*) = 0$.

Stochastic gradient descent (SGD) \citep{robbins1951stochastic},
$x_{t+1} = x_t - \eta_t \nabla f(x_t, \zeta_t)$,
converges to~$x^*$ under standard step-size conditions, but its last
iterate is asymptotically suboptimal: to remove this inefficiency,
Polyak and Juditsky \citep{polyak1992acceleration} and Ruppert
\citep{ruppert1988efficient} introduced iterate averaging
$\overline{x}_n = n^{-1}\sum_{t=1}^{n} x_t$ and established
$\sqrt{n}(\overline{x}_n - x^*)
\xrightarrow{d}
\mathcal{N}(0, H^{-1}SH^{-1})$.
In regular well-specified likelihood models the information matrix
equality gives $S = H$, so the sandwich covariance reduces to $H^{-1}$,
the inverse Fisher information, and the averaged iterate is asymptotically
efficient \citep{vaart1998asymptotic}.

While Polyak--Ruppert averaging guarantees the target asymptotic distribution,
its finite-sample behavior can be poor when the objective is ill-conditioned.
When the eigenvalues of the Hessian $H$ span several orders of magnitude,
unpreconditioned SGD oscillates along high-curvature directions and
makes slow progress along low-curvature ones; the resulting transient
behavior delays convergence of the averaged iterate to its asymptotic regime
\citep{bottou2018optimization}.
Preconditioning addresses this through a matrix $P_t$ that rescales the
gradient step along different directions:
\[
  x_{t+1} = x_t - \eta_t P_t \nabla f(x_t, \zeta_t),
\]
where $P_t$ is built from information available before step~$t$
(a predictability condition formalized in Section~\ref{sec:problem}).
A well-chosen $P_t$ approximately equalizes curvature across directions,
so that each converges at a comparable rate, bringing the
finite-sample distribution of the averaged iterate closer to its Gaussian
limit.
From an inferential standpoint, one would expect that confidence intervals
based on the preconditioned average reach their nominal coverage at
smaller sample sizes than under unpreconditioned SGD---provided the CLT
itself is not disrupted by the evolving preconditioner.
Classical examples include AdaGrad \citep{duchi2011adaptive}, RMSProp
\citep{tieleman2012rmsprop}, and online Newton-type methods
\citep{hazan2007logarithmic}; adaptive preconditioning of this kind
reduces the effective condition number of the problem.

In all these methods, however, $P_t$ is itself estimated
from the data and, therefore, evolves with the iterates, introducing
additional time variation into the recursion.  This creates a fundamental
tension: preconditioning improves finite-sample convergence by reducing the
anisotropy of the problem geometry, but the time-varying preconditioner
could alter the leading term of the averaged error and invalidate
the CLT on which all downstream inference depends.
Indeed, online inference procedures developed for averaged SGD
\citep{chen2020statistical,lee2021fast,zhu2023online} are designed for
the unpreconditioned case and do not cover adaptively preconditioned
methods.

This tension leads to the central question of the paper:
\begin{quote}
\emph{How fast must a data-dependent preconditioner $P_t$ stabilize for
Polyak--Ruppert averaged SGD to preserve its CLT, and how tight is this
requirement under standard polynomial-rate proof techniques?}
\end{quote}
Existing non-asymptotic analyses of averaged SGD do not incorporate
dynamically evolving preconditioners
\citep{gadat2023optimal,mou2020linear}; conditioned-SGD CLTs
\citep{leluc2023asymptotic,boyer2023stochastic} do not derive the
stabilization rate required for the CLT; and optimization-oriented
analyses of adaptive methods do not study Polyak--Ruppert averaging
\citep{godichon2024adagrad,godichon2025adaptive}.
A detailed comparison is given in Section~\ref{sec:related}.

\paragraph*{Contribution~I: exact preconditioner-isolating decomposition}
Our starting point is an exact, pathwise decomposition of
the averaged error (Lemma~\ref{lem:decomp}):
\[
  \overline{x}_n - x^*
  \;=\;
  \underbrace{-\frac{1}{n}H^{-1}\sum_{t=1}^{n}\xi_t}_{\text{martingale term }\Xi_n}
  \;\underbrace{-\;\frac{1}{n}H^{-1}\sum_{t=1}^{n}u_t}_{\text{Taylor remainder }T_n}
  \;+\; R_n(\{P_t\}),
\]
where $\xi_t$ is the martingale innovation at step $t$ and $u_t$ is the
second-order Taylor remainder from linearizing $\nabla F$ at $x^*$.
The key property is that \emph{no preconditioner matrix appears in either
the martingale term $\Xi_n$ or the Taylor remainder $T_n$}; all explicit
dependence on the evolving preconditioner sequence is confined to the dynamic
remainder $R_n(\{P_t\})$.  (The terms $\Xi_n$ and $T_n$ can still depend
on~$P_t$ implicitly through the iterate trajectory~$\{x_t\}$, but no
preconditioner matrix enters their algebraic form.)
This is a stronger separation than existing analyses provide:
prior CLT proofs for preconditioned SGD
\citep{leluc2023asymptotic,boyer2023stochastic,godichon2024hessian} typically
retain the time-varying preconditioner inside the leading term and invoke
an almost-sure limit $P_t \to P$ to identify the Gaussian covariance.
Our decomposition yields the exact leading term
$-n^{-1}H^{-1}\sum\xi_t$ without requiring $P_t$ to converge.  It thereby
reduces the CLT problem under dynamic preconditioning to a concrete
analytic question---\emph{how fast must $P_t$ stabilize for
$\sqrt{n}\,R_n \to 0$?}---which is exactly what the next contribution answers.

\paragraph*{Contribution~II: a stabilization-rate threshold with a rate-saturating construction}
The decomposition above converts the CLT question into a rate condition
on the preconditioner sequence through the dynamic remainder $R_n$, and
our central contribution answers it with a threshold that determines
when Polyak--Ruppert averaging preserves its CLT---and, hence, its
inferential validity---under dynamic preconditioning.  The condition
is expressed through a \emph{stabilization rate} $\beta > 0$, defined by
$\|M_t - M_{t-1}\|_{\mathrm{op}} \lesssim t^{-\beta}$ where
$M_t = (P_tH)^{-1}$ is the effective inverse drift matrix, with
step-size exponent $\alpha \in (1/2,1)$ in $\eta_t \propto t^{-\alpha}$.
We prove:
\begin{enumerate}
\item[\emph{(i)}] \emph{Sufficiency.}\;
  Under bounded $\|M_t\|_{\mathrm{op}}$,
  $\beta > (\alpha+1)/2$ implies
  $\sqrt{n}\,R_n(\{P_t\}) \to 0$ in $L^2$
  (Theorem~\ref{thm:sharp_threshold}).
\item[\emph{(ii)}] \emph{A saturating construction.}\;
  We exhibit a deterministic sequence satisfying the boundedness and
  stabilization hypotheses with $\beta \le (\alpha+1)/2$ for which
  $\sqrt{n}\,R_n$ does not vanish (Proposition~\ref{thm:sharpness}),
  showing that the exponent $(\alpha+1)/2$ cannot be lowered within
  the rate-hypothesis class used in our proof.  The scope of this tightness, and the open
  question of whether algorithmic coupling in a genuine adaptive
  recursion can lower the threshold further, are stated precisely in
  Remark~\ref{rem:sharpness_scope}.
\end{enumerate}
The threshold, therefore, lies in $(3/4,1)$.  Together with the
decomposition, the two results
convert the abstract question ``does this preconditioner preserve the
CLT?'' into a concrete, checkable condition: verifying
$\beta > (\alpha+1)/2$ together with bounded $\|M_t\|_{\mathrm{op}}$
suffices---under the regularity conditions of
Theorem~\ref{thm:clt}---to guarantee that the sandwich covariance
$H^{-1}SH^{-1}$ is the asymptotic covariance of the averaged iterate
and that Wald-type inference retains its nominal properties.

\paragraph*{Contribution~III: stabilization of three SA preconditioner constructions}
We prove a single stabilization result (Theorem~\ref{thm:sa_stab}) for a
stochastic-approximation (SA) recursion of symmetric positive-definite
matrices (or, in the diagonal case, of positive vectors),
$Q_t = (1-\rho_t)Q_{t-1} + \rho_t W_t$ with $\rho_t = c/t$, composed with
either the matrix inverse $A \mapsto A^{-1}$ or the inverse square root
$A\mapsto A^{-1/2}$.  The same proof simultaneously certifies three
concrete online preconditioners---SA full-matrix AdaGrad, diagonal
SA-RMSProp, and SA online Newton step (ONS)---each obtained by specifying
the accumulator $Q_t$, the driving term $W_t$, and the spectral map.  The
result delivers $L^2$ one-step stabilization of order $t^{-1}$, which
upgrades to the pathwise rate $\beta = 1$ under almost-sure bounded inputs.
These constructions are SA \emph{reparametrizations} of the classical
methods---the $c/t$ gain replaces AdaGrad's cumulative-sum accumulator
and RMSProp's constant-EMA weight---so the threshold certifies these SA
variants rather than the originally published AdaGrad, RMSProp, or Adam;
constant-EMA RMSProp is revisited as a threshold-violation case in
Section~\ref{sec:simulation}.

Since all three constructions deliver one-step stabilization of
order $t^{-1}$---pathwise (giving $\beta = 1 > (\alpha+1)/2$ in the
sense of Definition~\ref{def:beta}) under bounded-input conditions,
or in $L^2$ without bounded inputs via
Proposition~\ref{prop:l2_remainder}---the CLT
$\sqrt{n}(\overline{x}_n-x^*) \xrightarrow{d} \mathcal{N}(0,H^{-1}SH^{-1})$
holds for all $\alpha \in (1/2,1)$ (Theorem~\ref{thm:clt}).  It further implies
a quantitative Gaussian approximation bound in Wasserstein distance
(Corollary~\ref{cor:gauss_approx}) that \emph{preserves} the $n^{-1/6}$
rate \citep{kong2026finite} for \emph{unpreconditioned} nonlinear SA at
the optimal step-size exponent $\alpha^* = 2/3$.  Note that the
$n^{-1/6}$ exponent itself is not our contribution---it is inherited
from the unpreconditioned nonlinear baseline; our contribution is
showing that dynamic preconditioning above the threshold incurs no
rate penalty relative to that baseline.

\paragraph*{Contribution~IV: finite-sample operator-factor comparison with a factor-$\kappa(H)$ improvement}
Propagating the one-step stabilization bounds through the decomposition
yields an explicit finite-sample $L^2$ bound on $\sqrt{n}\,R_n$
(Proposition~\ref{prop:tight}), whose constant contains the pathwise
sup $C_P = \sup_t \|M_t\|_{\mathrm{op}}$; passing to the limit isolates
the preconditioner-dependent \emph{limiting operator factor}
$\bar C_P = \|M_\infty\|_{\mathrm{op}}$ with
$M_\infty = \lim_{t\to\infty} M_t$.  Computing this limiting factor for
each of the three SA constructions and for the identity baseline
(Polyak--Ruppert SGD with $P_t \equiv I$) gives a quantitative
comparison across preconditioner choices
(Proposition~\ref{prop:comparison}): under the spectral
normalization $\lambda_{\max}(H) = 1$, the identity baseline carries
$\bar C_P^{\mathrm{Id}} = \kappa(H)$, whereas SA-ONS drives $M_t \to I$
and attains $\bar C_P^{\mathrm{ONS}} = 1$.  SA-ONS, therefore, reduces
the limiting operator factor by a factor of $\kappa(H)$ relative to
Polyak--Ruppert SGD, at no cost to the asymptotic covariance---all three SA constructions share the
same sandwich covariance $H^{-1}SH^{-1}$.  This is, to our knowledge, the
first result that separates the three SA preconditioners
quantitatively in the finite-sample regime while preserving asymptotic
equivalence, and it gives a principled reason, on this operator-factor
measure, to prefer SA-ONS on ill-conditioned problems despite the higher
per-iteration computational cost.

\paragraph*{Roadmap of the core results}
The technical contributions are concentrated in five results:
Lemma~\ref{lem:decomp} (the exact preconditioner-isolating decomposition),
Theorem~\ref{thm:sharp_threshold} (sufficiency of
$\beta > (\alpha+1)/2$), Proposition~\ref{thm:sharpness} (the saturating
construction for the sufficiency hypotheses),
Theorem~\ref{thm:sa_stab} (stabilization of the three SA preconditioner
constructions), and Proposition~\ref{prop:comparison} (the
operator-factor comparison with its factor-$\kappa(H)$ improvement for
SA-ONS).  The remaining downstream results---the CLT
(Theorem~\ref{thm:clt}), the quantitative Gaussian approximation in
Wasserstein distance (Corollary~\ref{cor:gauss_approx}), and the
finite-sample $L^2$ bound (Proposition~\ref{prop:tight})---are
consequences of these five.

\subsection{Related Work}
\label{sec:related}

Our contribution sits at the intersection of four threads: (i)~averaged SA
theory, (ii)~online inference, (iii)~adaptive preconditioning, and
(iv)~stochastic Newton methods.

\textit{Averaged SA.}\;
Polyak and Juditsky~\citep{polyak1992acceleration} established a central
limit theorem for the averaged iterate with asymptotic covariance
$H^{-1}SH^{-1}$; an optimal non-asymptotic upper bound on the
mean-square error with leading term
$\mathrm{Tr}(H^{-1}SH^{-1})/n$ was subsequently established
by Gadat and Panloup~\citep{gadat2023optimal}.
Subsequent refinements and extensions include non-asymptotic
$O(1/n)$ rates for least-squares averaged SGD
\citep{bach2013non}, CLTs for linear SA
\citep{mou2020linear,butyrin2025improved}, Gaussian approximation and
multiplier bootstrap for linear SA \citep{samsonov2024gaussian},
Berry--Esseen bounds for averaged SGD \citep{shao2022berry},
multiplier bootstrap for SGD inference \citep{sheshukova2025bootstrap},
and extensions to weighted \citep{wei2025weighted} and momentum
\citep{tang2023momentum} averaging.  None of these works analyzes
dynamic preconditioning.

\textit{Online inference.}\;
Covariance estimators and online inference tools for unpreconditioned
averaged SGD are developed in
\citep{chen2020statistical,lee2021fast,zhu2023online}, with online
bootstrap confidence intervals for the SGD estimator in
\citep{fang2018online}.  Our
work shows that the same limiting distribution carries over to the
dynamically preconditioned setting when $\beta > (\alpha+1)/2$ and
$\|M_t\|_{\mathrm{op}}$ stays bounded, so that sandwich-based inference
retains its nominal coverage properties.

\textit{Adaptive preconditioning.}\;
AdaGrad \citep{duchi2011adaptive} and Online Newton Step
\citep{hazan2007logarithmic} are the classical references; RMSProp
was introduced in \citep{tieleman2012rmsprop}, with a convergence
analysis in \citep{shi2021rmsprop}.
Further extensions include curvature-based, structured, and general
preconditioner designs
\citep{schraudolph2007stochastic,byrd2016stochastic,li2015preconditioned,scott2025designing,xie2025structured}
and convergence analyses of adaptive methods
\citep{godichon2024adagrad,godichon2025adaptive,surendran2024non}.
The adaptive-preconditioning literature focuses on optimization
convergence rather than inferential consequences.

\textit{Stochastic Newton.}\;
Boyer and Godichon-Baggioni~\citep{boyer2023stochastic} establish CLTs
for weighted-averaged stochastic Newton, and Leluc and
Portier~\citep{leluc2023asymptotic} prove a CLT for conditioned SGD
under $P_t \to P$ a.s.; our decomposition, non-asymptotic bounds, and
stabilization-rate analysis are complementary to both.
See also
\citep{bercu2020efficient,godichon2024hessian,godichon2025streaming,cenac2025gauss}
for related Newton-type analyses.
In a high-dimensional regime, Jagannath
et~al.~\citep{jagannath2025highdim} derive scaling limits---a parallel
line of work.

Table~\ref{tab:positioning} summarizes the positioning.  To the best of our
knowledge, no prior work provides a stabilization-rate threshold for
the CLT under dynamic preconditioning together with a tightness result
for the sufficiency hypotheses of the upper-bound proof.

\begin{table}[t]
\centering
\small
\caption{Positioning relative to the most closely related works.
Columns: stabilization-rate threshold with tightness of the sufficiency
hypotheses (Thresh.\ \& tight.); preconditioner-isolating
decomposition (Decomp.); non-asymptotic remainder bounds (Non-asymp.);
CLT for averaged iterates; SA preconditioner construction with a
stabilization rate matching the threshold $\beta > (\alpha+1)/2$
(SA constr.; ``partial'' = construction provided but the threshold
rate not fully established).}
\label{tab:positioning}
\begin{tabularx}{\textwidth}{@{}Xccccc@{}}
\toprule
\textbf{Reference} & \textbf{Thresh.\ \& tight.} & \textbf{Decomp.} & \textbf{Non-asymp.} & \textbf{CLT} & \textbf{SA constr.} \\
\midrule
Polyak \& Juditsky \citep{polyak1992acceleration}  & \texttimes & \texttimes & \texttimes & \checkmark & \texttimes \\
Gadat \& Panloup \citep{gadat2023optimal}          & \texttimes & \texttimes & \checkmark & \texttimes & \texttimes \\
Mou et~al.\ \citep{mou2020linear}                  & \texttimes & \texttimes & \checkmark & \checkmark & \texttimes \\
Leluc \& Portier \citep{leluc2023asymptotic}        & \texttimes & \texttimes & \texttimes & \checkmark & \texttimes \\
Boyer \& Godichon-B.\ \citep{boyer2023stochastic}  & \texttimes & \texttimes & \texttimes & \checkmark & \texttimes \\
Godichon-B.\ et~al.\ \citep{godichon2024hessian}   & \texttimes & \texttimes & \texttimes & \checkmark & partial \\
Surendran et~al.\ \citep{surendran2024non}          & \texttimes & \texttimes & \checkmark & \texttimes & \texttimes \\
\textbf{This paper}                                & \checkmark & \checkmark & \checkmark & \checkmark & \checkmark \\
\bottomrule
\end{tabularx}
\end{table}

The remainder of the paper is organized as follows.
Section~\ref{sec:problem} introduces the setup and assumptions.
Section~\ref{sec:decomp} develops the preconditioner-isolating decomposition
and establishes the stabilization-rate threshold.
Section~\ref{sec:construction} introduces three SA preconditioner
constructions and proves their stabilization.
Section~\ref{sec:main} presents the finite-sample and asymptotic
analysis, including the CLT, quantitative Gaussian approximation,
and the operator-factor comparison across preconditioner choices.
Section~\ref{sec:simulation} reports numerical experiments.
Section~\ref{sec:discussion} concludes with a discussion.
The supplement collects deferred proofs and additional experiments diagnostics.

\section{Setup and Assumptions}
\label{sec:problem}

This section introduces the probabilistic setup and the model
assumptions, and then specifies the preconditioned recursion that will be
analyzed throughout the paper.

We retain the notation introduced in Section~\ref{sec:intro}: $f$, $F$,
$x^*$, $H$, and $S$ denote the sample loss, the population risk, its unique
minimizer, the population Hessian, and the gradient covariance at $x^*$,
respectively.
We write $A \preceq B$ when $B-A$ is positive semidefinite, and
$\|A\|_{\mathrm{op}}$ for the operator norm of a matrix~$A$.

Let $\{\zeta_t\}_{t \ge 1}$ be a sequence of i.i.d.\ random variables
defined on a probability space $(\Omega, \mathcal{F}, \mathbb{P})$.
Throughout, $\{x_t\}_{t\ge1}$ denotes the iterate sequence generated by the
preconditioned update introduced below, with initialization $x_1$ assumed
deterministic; more generally, the same arguments apply when $x_1$ is
random and independent of $\{\zeta_t\}_{t \ge 1}$ with
$\mathbb{E}\|x_1 - x^*\|^2 < \infty$.  Define the natural filtration
$\mathcal{F}_t := \sigma(x_1,\zeta_1,\ldots,\zeta_t)$, $t \ge 0$.
Let $\xi_t(x) := \nabla f(x, \zeta_t) - \nabla F(x)$ denote the
stochastic gradient noise, and write
$\xi_t := \xi_t(x_t) = \nabla f(x_t,\zeta_t) - \nabla F(x_t)$
for its evaluation at the current iterate.

\subsection{Assumptions}

The assumptions here describe the basic stochastic approximation
model; additional moment or bounded-increment conditions used for
sharper stabilization-rate bounds and quantitative Gaussian
approximation results are introduced where they are invoked.
We adopt assumptions that parallel those of the original Polyak--Ruppert
averaged SGD analysis~\citep{polyak1992acceleration,ruppert1988efficient}
so that our central
claim---preservation of the Polyak--Ruppert CLT under dynamic
preconditioning---is directly comparable to the unpreconditioned
baseline.  Each of Assumptions~\ref{ass:mg}--\ref{ass:quadratic} is standard in
the averaged SGD and adaptive SA literature, as documented by the
per-assumption citations below; Assumption~\ref{ass:iterate} is the
(derived) iterate MSE rate that Proposition~\ref{prop:iterate_bound}
establishes under primitive conditions.

\begin{assumption}[Martingale Difference]
\label{ass:mg}
$\mathbb{E}[\xi_t \mid \mathcal{F}_{t-1}] = 0$ for all $t \geq 1$.
\end{assumption}

\noindent
Assumption~\ref{ass:mg} is the standard unbiased gradient noise condition
\citep{polyak1992acceleration,chen2020statistical,mou2020linear,zhu2023online,%
gadat2023optimal,boyer2023stochastic,leluc2023asymptotic,bach2013non,%
godichon2025adaptive,shao2022berry}.
Since $\zeta_t$ is independent of $\mathcal{F}_{t-1}$ and $x_t$ is
$\mathcal{F}_{t-1}$-measurable, Assumption~\ref{ass:mg} follows whenever
$\nabla f(x,\zeta)$ is an unbiased estimator of $\nabla F(x)$.

\begin{assumption}[Uniform Conditional Covariance Bound]
\label{ass:var}
There exists a deterministic positive semidefinite matrix $\overline{S}$ such
that $\mathbb{E}[\xi_t \xi_t^\top \mid \mathcal{F}_{t-1}] \preceq \overline{S}$
for all $t \geq 1$.
\end{assumption}

\noindent
Assumption~\ref{ass:var} imposes a standard conditional second-moment
control on the gradient noise, with analogous second-moment conditions
appearing in
\citep{polyak1992acceleration,chen2020statistical,mou2020linear,gadat2023optimal,%
boyer2023stochastic,leluc2023asymptotic,samsonov2024gaussian,%
wei2025weighted,godichon2025adaptive}.
Here $S = \mathrm{Cov}(\nabla f(x^*,\zeta))$ is the covariance at $x^*$,
and $\overline{S}$ is a uniform conditional upper bound used for
non-asymptotic control along the trajectory;
in general $S \preceq \overline{S}$, with equality when the conditional
covariance does not depend on the iterate.

\begin{assumption}[Strong Convexity]
\label{ass:convex}
$F$ is $\mu$-strongly convex for some $\mu > 0$.
\end{assumption}

\noindent
Assumption~\ref{ass:convex}, or the closely related local curvature
condition at $x^*$ (positive-definite Hessian at the optimum), is standard
in the averaged SGD and adaptive SA literature
\citep{polyak1992acceleration,chen2020statistical,boyer2023stochastic,%
gadat2023optimal,wei2025weighted,bercu2020efficient,%
godichon2024hessian,zhu2023online,leluc2023asymptotic}.
Assumption~\ref{ass:convex}, combined with the local second-order expansion
in Assumption~\ref{ass:quadratic}, implies $H \succeq \mu I \succ 0$ and
$\|H^{-1}\|_{\mathrm{op}} \leq \mu^{-1}$.
In the linear SA setting \citep{mou2020linear,samsonov2024gaussian}, the
analogous condition is that the driving matrix has eigenvalues with strictly
positive real part.

\begin{assumption}[Local Second-Order Expansion with Trajectory Confinement]
\label{ass:quadratic}
There exist a neighborhood $\mathcal{N}$ of $x^*$ and a constant $L_R > 0$
such that $F$ is twice continuously differentiable on $\mathcal{N}$,
$\nabla^2 F(x^*) = H$, and
\[
  \nabla F(x) = H(x - x^*) + r(x),
  \qquad
  \|r(x)\| \leq L_R \|x - x^*\|^2
\]
for all $x \in \mathcal{N}$; moreover, the iterate sequence satisfies
$x_t \in \mathcal{N}$ almost surely for all $t \ge 1$.
\end{assumption}

\noindent
The local-expansion part of Assumption~\ref{ass:quadratic} is standard
\citep{polyak1992acceleration,chen2020statistical,boyer2023stochastic,%
shao2022berry,bercu2020efficient}
and follows whenever $\nabla^2 F$ is locally Lipschitz near~$x^*$.
The same local quadratic expansion underlies the Berry--Esseen bounds
of Shao and Zhang~\citep{shao2022berry} for nonlinear $M$-estimators
and the stochastic Newton analysis of Bercu, Godichon-Baggioni,
and Portier~\citep{bercu2020efficient}; unlike implicit linearization
approaches, we retain the remainder $r(x)$ explicitly, which is what
makes the preconditioner-isolating decomposition of
Lemma~\ref{lem:decomp} exact rather than approximate.  The containment condition $x_t \in \mathcal{N}$ is
vacuous when $F$ is globally smooth ($\mathcal{N} = \mathbb{R}^d$).
For locally smooth objectives it can be justified a~posteriori: once
almost-sure iterate convergence is established by other means,
$x_t \in \mathcal{N}$ holds for all~$t$ beyond some (random)
time~$\tau_0$, and the CLT applies with~$\tau_0$ in place
of~$t=1$ via a shift-of-origin argument
\citep{polyak1992acceleration}; the quantitative Gaussian
approximation (Corollary~\ref{cor:gauss_approx}), being a
finite-sample bound, requires the containment to hold from
$t=1$.
Corollaries~\ref{cor:adaptive_iterate} and~\ref{cor:ons_iterate}
enforce the containment by positing a deterministic convex set
(inside $\mathcal{N}$) that contains all iterates almost surely
(hypothesis~(b) therein).

Under Assumption~\ref{ass:quadratic}, define
the Taylor remainder $u_t := r(x_t)$, so that
\begin{equation} \label{eq:expansion}
  \nabla f(x_t, \zeta_t) = H(x_t - x^*) + u_t + \xi_t,
  \qquad
  \|u_t\| \leq L_R \|x_t - x^*\|^2,
\end{equation}
where $\xi_t = \nabla f(x_t,\zeta_t) - \nabla F(x_t)$.

\begin{assumption}[Iterate Bound]
\label{ass:iterate}
There exist constants $C_\Delta > 0$ and $\alpha \in (1/2, 1)$ such that
$\mathbb{E}\|\Delta_t\|^2 \leq C_\Delta\, t^{-\alpha}$ for all $t \geq 1$,
where $\Delta_t = x_t - x^*$.
\end{assumption}

\noindent
This is a standard MSE rate achieved by step-size
$\eta_t = \eta_0 t^{-\alpha}$ under strong convexity
\citep{gadat2023optimal,godichon2025adaptive,chen2020statistical,%
boyer2023stochastic,godichon2024adagrad}.
Crucially for our setting, Godichon-Baggioni, Lu, and
Portier~\citep{godichon2024adagrad} establish convergence rates for an
AdaGrad-type adaptive method, confirming that adaptively preconditioned
schemes admit the kind of MSE decay we require here.
Proposition~\ref{prop:iterate_bound} gives a sufficient condition, and
Corollaries~\ref{cor:adaptive_iterate}--\ref{cor:ons_iterate} establish it for
all three SA preconditioners under their respective bounded-input hypotheses.

\subsection{Preconditioned Update Recursion}
\label{sec:update}

We consider the following preconditioned SGD recursion.  Let
$\eta_t = \eta_0 t^{-\alpha}$ with $\eta_0 > 0$ and
$\alpha \in (1/2, 1)$, and let
$P_t \in \mathbb{R}^{d \times d}$ be a time-varying symmetric positive
definite preconditioner that is $\mathcal{F}_{t-1}$-measurable
(equivalently, predictable with respect to the filtration
$\{\mathcal{F}_t\}_{t \ge 0}$):
\[
    x_{t+1} = x_t - \eta_t P_t \nabla f(x_t, \zeta_t).
\]
Subtracting $x^*$ from both sides of the update and substituting the
expansion~\eqref{eq:expansion} yields the error recursion
\begin{equation} \label{eq:recursion}
    \Delta_{t+1} = (I - \eta_t P_t H)\Delta_t - \eta_t P_t \xi_t
    - \eta_t P_t u_t.
\end{equation}

\begin{proposition}[A Sufficient Condition for Assumption~\ref{ass:iterate}]
\label{prop:iterate_bound}
Consider the preconditioned SGD update
\[
    x_{t+1} = x_t - \eta_t P_t \nabla f(x_t,\zeta_t),
    \qquad
    \eta_t = \eta_0 t^{-\alpha},
    \qquad
    \alpha \in (1/2,1),
\]
where $P_t$ is $\mathcal{F}_{t-1}$-measurable (predictable) and the
initialization $x_1$ satisfies $\mathbb{E}\|\Delta_1\|^2 < \infty$
(in particular, $x_1$ deterministic, or more generally $x_1$ random
and independent of $\{\zeta_t\}_{t\ge 1}$ with finite second moment).
Assume, in addition to Assumptions~\ref{ass:mg}--\ref{ass:convex}, that:
\begin{enumerate}
  \item[(i)] $F$ is $L$-smooth on a convex set
    $\mathcal{K} \subseteq \mathbb{R}^d$ that contains $x^*$ and satisfies
    $x_t \in \mathcal{K}$ almost surely for all $t \geq 1$;
  \item[(ii)] there exist deterministic constants
    $0 < p_- \leq p_+ < \infty$ such that
    \[
        p_- I \preceq P_t \preceq p_+ I
        \qquad \text{almost surely for all } t \geq 1.
    \]
\end{enumerate}
Then there exists a constant $C_\Delta > 0$ such that
\[
    \mathbb{E}\|\Delta_t\|^2 \leq C_\Delta\, t^{-\alpha}
    \qquad \text{for all } t \geq 1.
\]
\end{proposition}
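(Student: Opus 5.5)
The plan is to run a one-step mean-square recursion. Because $P_t$ need not commute with the Hessian of $F$ away from $x^*$, the cross term $\langle \Delta_t, P_t\nabla F(x_t)\rangle$ has no sign in general, so I will not use $\|\Delta_t\|^2$ as the Lyapunov function. Instead I will use the suboptimality gap $e_t := \mathbb{E}[F(x_t)-F(x^*)]$. For it the drift term is the quadratic form $\nabla F^\top P_t \nabla F$, which is controlled by hypothesis (ii). At the end I convert back via strong convexity (Assumption~\ref{ass:convex}): $\|\Delta_t\|^2 \le \tfrac{2}{\mu}(F(x_t)-F(x^*))$.

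\textbf{Step 1 (one-step descent).} Both $x_t$ and $x_{t+1}$ lie in the convex set $\mathcal{K}$ by (i), so the segment between them does too. $L$-smoothness on $\mathcal{K}$ then gives
\[
F(x_{t+1}) \le F(x_t) - \eta_t \nabla F(x_t)^\top P_t \nabla f(x_t,\zeta_t) + \tfrac{L}{2}\eta_t^2 \|P_t \nabla f(x_t,\zeta_t)\|^2 .
\]
Write $\nabla f(x_t,\zeta_t) = \nabla F(x_t)+\xi_t$ and condition on $\mathcal{F}_{t-1}$. Predictability of $P_t$ and Assumption~\ref{ass:mg} kill the cross term with $\xi_t$. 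The drift is then at most $-\eta_t p_-\|\nabla F(x_t)\|^2$ by (ii). The second-order term is at most $\tfrac{L}{2}\eta_t^2 p_+^2(\|\nabla F(x_t)\|^2 + \mathrm{tr}\,\overline{S})$ by (ii) and Assumption~\ref{ass:var}. To close the recursion I use two inequalities. Strong convexity gives the Polyak--{\L}ojasiewicz inequality $\|\nabla F\|^2 \ge 2\mu(F-F^*)$, which lower-bounds the drift. Smoothness on $\mathcal{K}$ (with $x^*\in\mathcal{K}$) gives $\|\nabla F\|^2 \le 2L(F-F^*)$, which upper-bounds the second-order term. Together these yield
\[
e_{t+1} \le \bigl(1 - 2\mu p_-\eta_t + L^2p_+^2\eta_t^2\bigr) e_t + \tfrac{L}{2}p_+^2\,\mathrm{tr}(\overline{S})\,\eta_t^2 .
\]
Since the drift term is only an upper bound of the right sign, it remains valid even if $2\mu p_- \eta_t > 1$. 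The initial condition is finite: $e_1 \le \tfrac{L}{2}\mathbb{E}\|\Delta_1\|^2 < \infty$.

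\textbf{Step 2 (Chung-type lemma).} Choose $t_0$ such that $L^2p_+^2\eta_t \le \mu p_-$ for $t\ge t_0$. Then the contraction factor is at most $1-\mu p_-\eta_0 t^{-\alpha}$ for $t \ge t_0$. For $t<t_0$ the factor is at most the finite constant $1+L^2p_+^2\eta_0^2$, so iterating the recursion gives $e_{t_0}\le C_0<\infty$. For $t \ge t_0$ I prove $e_t \le C t^{-\alpha}$ by induction, taking $C$ large. The inductive step requires
\[
(1-\mu p_-\eta_0 t^{-\alpha})\,C t^{-\alpha} + c\,t^{-2\alpha} \le C (t+1)^{-\alpha}.
\]
Using $(t+1)^{-\alpha} \ge t^{-\alpha}(1-\alpha/t)$, this reduces to $C(\mu p_-\eta_0 t^{-\alpha} - \alpha t^{-1}) \ge c\,t^{-\alpha}$. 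Since $\alpha<1$, the term $t^{-\alpha}$ dominates $\alpha t^{-1}$ for large $t$, so after enlarging $t_0$ the condition holds with $C \ge 2c/(\mu p_-\eta_0)$. Finally, absorb the finitely many indices $t\le t_0$, and $C_0 t_0^{\alpha}$, into the constant.

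\textbf{Step 3 and main obstacle.} Conclude that $\mathbb{E}\|\Delta_t\|^2 \le \tfrac{2}{\mu}e_t \le C_\Delta t^{-\alpha}$. The main obstacle is the non-commutativity noted above, which is why the analysis is built on $F$ rather than $\|\Delta\|^2$. The only other delicate point is the induction bookkeeping in Step 2, where $\alpha<1$ is used essentially.
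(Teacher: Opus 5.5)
Your proposal is correct in structure and follows essentially the paper's proof: the excess risk as Lyapunov function, the descent lemma on $\mathcal{K}$, the Polyak--{\L}ojasiewicz inequality for the drift, a Chung-type induction that uses $\alpha<1$, and conversion back via strong convexity. The only structural difference is that you keep the $\|\nabla F\|^2$ part of the second-order term in the contraction factor instead of absorbing it into the drift for $t\ge T_0$. This also gives you finiteness of $e_{t_0}$ directly from the recursion, rather than through the paper's separate second-moment induction.

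One step needs repair. The inequality $\|\nabla F(x)\|^2\le 2L(F(x)-F(x^*))$ does not follow from $L$-smoothness on a convex set $\mathcal{K}$ alone. Its standard proof evaluates $F$ at $x-L^{-1}\nabla F(x)$, which need not lie in $\mathcal{K}$, and the inequality genuinely fails for a strongly convex quadratic when $\mathcal{K}$ is a segment. Use instead $\|\nabla F(x_t)\|=\|\nabla F(x_t)-\nabla F(x^*)\|\le L\|\Delta_t\|$ together with $\|\Delta_t\|^2\le \tfrac{2}{\mu}(F(x_t)-F(x^*))$, as the paper does in its finiteness step. This replaces $2L$ by $2L^2/\mu$ and changes nothing else in your argument.
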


\begin{proof}
Deferred to the supplementary material.
\end{proof}

Proposition~\ref{prop:iterate_bound} follows from a standard descent
argument under $L$-smoothness and $\mu$-strong convexity, specialized
to the predictable-preconditioner setting.  Note that
Proposition~\ref{prop:iterate_bound} does not invoke
Assumption~\ref{ass:quadratic}: $L$-smoothness of $F$ on $\mathcal{K}$
together with $\mu$-strong convexity already suffice for the iterate
MSE rate, so the local second-order expansion is not needed here.
Condition~(i) is vacuous when $F$ is globally
smooth; for locally smooth objectives it is a pathwise containment
hypothesis that must be verified separately (e.g., via problem-specific
Lyapunov arguments).  This condition also implies the containment part
of Assumption~\ref{ass:quadratic} whenever $\mathcal{K} \subseteq \mathcal{N}$.
After the three SA preconditioners are introduced in
Section~\ref{sec:construction}, Corollaries~\ref{cor:adaptive_iterate}
and~\ref{cor:ons_iterate} apply
Proposition~\ref{prop:iterate_bound}: their hypothesis~(b) supplies
condition~(i), while condition~(ii) is established from the primitive
bounded-gradient or bounded-Hessian inputs, thereby securing both the
containment part of Assumption~\ref{ass:quadratic} and
Assumption~\ref{ass:iterate}.  With the assumption set and the
preconditioned recursion in place, Section~\ref{sec:decomp} develops
the exact telescoping identity that isolates the preconditioner
sequence in a single dynamic remainder and drives the analysis that
follows.

\section{Preconditioner-Isolating Decomposition}
\label{sec:decomp}

The analysis that follows is organized around an exact pathwise
decomposition of the averaged error
$\overline{\Delta}_n := n^{-1}\sum_{t=1}^n \Delta_t$
(Lemma~\ref{lem:decomp}) into three pieces: a martingale term $\Xi_n$,
a Taylor remainder $T_n$, and a dynamic remainder $R_n$ that carries
all explicit dependence on the preconditioner sequence.  Earlier CLT
proofs for preconditioned SGD invoke an almost-sure limit $P_t \to P$
to identify the Gaussian covariance; the decomposition~\eqref{eq:decomposition}
replaces this indirect step with an explicit pathwise formula.  Because $\Xi_n$ and
$T_n$ are controlled by standard martingale and iterate-rate estimates
(Propositions~\ref{prop:variance} and~\ref{prop:taylor}), the
preconditioner-specific obstacle to the CLT reduces to showing
$\sqrt{n}\,R_n \to 0$, and the stabilization threshold is obtained
from a direct analysis of $R_n$ alone
(Theorem~\ref{thm:sharp_threshold}).

\begin{lemma}[Exact Preconditioner-Isolating Decomposition]
\label{lem:decomp}
Under the recursion \eqref{eq:recursion} and
Assumptions~\ref{ass:convex}--\ref{ass:quadratic}, the Polyak--Ruppert
averaged error $\overline{\Delta}_n$
admits the exact pathwise decomposition
\begin{equation} \label{eq:decomposition}
    \overline{\Delta}_n
    = \underbrace{-\frac{1}{n}H^{-1}\sum_{t=1}^n \xi_t}_{\displaystyle \Xi_n}
    \;\underbrace{-\frac{1}{n}H^{-1}\sum_{t=1}^n u_t}_{\displaystyle T_n}
    \;+\; R_n(\{P_t\}),
\end{equation}
where $\Xi_n$ is the martingale term, $T_n$ is the Taylor
remainder, and the dynamic remainder is defined by
\begin{equation} \label{eq:remainder}
    R_n(\{P_t\})
    :=
    \frac{1}{n} \left[
        A_1\Delta_1 - A_n\Delta_{n+1}
        + \sum_{t=2}^n (A_t-A_{t-1})\Delta_t
    \right],
    \qquad
    A_t := \eta_t^{-1}(P_tH)^{-1}.
\end{equation}
In particular, no preconditioner matrix appears explicitly in $\Xi_n$ or
$T_n$; all explicit dependence on the preconditioner sequence is carried by
$R_n(\{P_t\})$.  The terms $\Xi_n$ and $T_n$ may still depend implicitly on the
preconditioned trajectory through the iterate sequence $\{x_t\}$.
\end{lemma}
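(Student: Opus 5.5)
Rearranging the error recursion \eqref{eq:recursion} so that each $\Delta_t$ is expressed through the increment $\Delta_t-\Delta_{t+1}$ and the noise and remainder terms, then summing by parts, should give the identity directly. First note that $P_tH$ is invertible: $P_t$ is symmetric positive definite, and $H\succeq\mu I\succ 0$ by Assumptions~\ref{ass:convex}--\ref{ass:quadratic}. Hence $A_t=\eta_t^{-1}(P_tH)^{-1}$ is well defined. Assumption~\ref{ass:quadratic} also guarantees that \eqref{eq:expansion}, and therefore \eqref{eq:recursion}, hold pathwise for every $t$, since $x_t\in\mathcal N$ almost surely.

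From \eqref{eq:recursion}, $\Delta_t-\Delta_{t+1}=\eta_tP_tH\Delta_t+\eta_tP_t\xi_t+\eta_tP_tu_t$. Left-multiplying by $A_t=\eta_t^{-1}H^{-1}P_t^{-1}$ gives
\[
A_t(\Delta_t-\Delta_{t+1}) = \Delta_t + H^{-1}\xi_t + H^{-1}u_t,
\]
so that $\Delta_t = A_t(\Delta_t-\Delta_{t+1}) - H^{-1}\xi_t - H^{-1}u_t$. The preconditioner cancels exactly in the $\xi_t$ and $u_t$ terms because $A_t\eta_tP_t=H^{-1}$. This cancellation is the whole reason $\Xi_n$ and $T_n$ carry no $P_t$.

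Next I sum over $t=1,\dots,n$ and divide by $n$. The last two terms give exactly $\Xi_n+T_n$. For the first term I apply Abel summation:
\[
\sum_{t=1}^n A_t(\Delta_t-\Delta_{t+1}) = \sum_{t=1}^nA_t\Delta_t-\sum_{t=2}^{n+1}A_{t-1}\Delta_t = A_1\Delta_1 - A_n\Delta_{n+1} + \sum_{t=2}^n (A_t-A_{t-1})\Delta_t .
\]
Dividing by $n$, this is precisely $R_n(\{P_t\})$ in \eqref{eq:remainder}, which yields \eqref{eq:decomposition}.

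There is no genuine obstacle, since the argument is purely algebraic and pathwise. The only points to check carefully are the invertibility of $P_tH$ and the fact that the identity holds almost surely, which is where trajectory confinement is used. I would also note that $\xi_t$ and $u_t$ are defined independently of $P_t$, which justifies the closing claim that $P_t$ enters $\Xi_n$ and $T_n$ only implicitly through $\{x_t\}$.
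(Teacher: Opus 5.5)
Your proposal is correct and follows the paper's proof essentially line for line: you left-multiply the rearranged recursion by $A_t$, use the cancellation $(P_tH)^{-1}P_t = H^{-1}$, sum over $t$, and apply Abel summation to $\sum_t A_t(\Delta_t-\Delta_{t+1})$. Your remarks that $P_tH$ is invertible and that the identity holds almost surely via trajectory confinement are sound, and slightly more explicit than the paper.
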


\begin{proof}
Deferred to the supplementary material.
\end{proof}

\begin{remark}[Role of the decomposition]
\label{rem:decomp_novelty}
The proof combines left-multiplication by $\eta_t^{-1}(P_tH)^{-1}$
(using the cancellation $(P_tH)^{-1}P_t = H^{-1}$) with Abel summation.
In prior CLT proofs, the preconditioner remains inside the leading
term ($C_t\xi_t$ in~\citep{leluc2023asymptotic}, $H_t^{-1}\xi_t$
in~\citep{boyer2023stochastic,godichon2024hessian}), so an explicit
threshold on the stabilization rate is not visible in their analysis.
Lemma~\ref{lem:decomp} yields the exact leading term
$-n^{-1}H^{-1}\sum_{t=1}^n \xi_t$ with no $P_t$, enabling the explicit
threshold analysis of \S\ref{sec:remainder}.
Lemma~\ref{lem:decomp} extends the fixed-matrix identity of
Mou et~al.~\citep{mou2020linear} to time-varying $P_t$.
\end{remark}

\begin{proposition}[Second-Moment Bound for the Statistical-Noise Term]
\label{prop:variance}
Under Assumptions~\ref{ass:mg}--\ref{ass:convex},
\begin{equation}
    \mathbb{E} \left\|\frac{1}{n}H^{-1}\sum_{t=1}^n \xi_t \right\|^2
    \;\leq\; \frac{1}{n}\,\mathrm{Tr}(H^{-1}\overline{S}H^{-1}).
\end{equation}
In particular, $\mathbb{E}\|\Xi_n\|^2 = \mathcal{O}(n^{-1})$.  When
$\overline{S} = S$, the right-hand side equals
$\frac{1}{n}\mathrm{Tr}(H^{-1}SH^{-1})$, the $n^{-1}$-scaled
trace of the classical Polyak--Ruppert asymptotic covariance
$H^{-1}SH^{-1}$~\citep{polyak1992acceleration}.
\end{proposition}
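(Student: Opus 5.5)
The bound follows from orthogonality of martingale increments. Set $S_n := \sum_{t=1}^n \xi_t$. Since $H^{-1}$ is deterministic and symmetric,
\[
\mathbb{E}\|H^{-1}S_n\|^2 = \mathrm{Tr}\bigl(H^{-1}\,\mathbb{E}[S_nS_n^\top]\,H^{-1}\bigr).
\]
Expanding gives $\mathbb{E}[S_nS_n^\top]=\sum_{s,t}\mathbb{E}[\xi_s\xi_t^\top]$. For $s<t$, $\xi_s$ is $\mathcal{F}_{t-1}$-measurable, because $x_s$ and $\zeta_s$ are. Assumption~\ref{ass:mg} and the tower property then give $\mathbb{E}[\xi_s\xi_t^\top]=\mathbb{E}[\xi_s\,\mathbb{E}[\xi_t\mid\mathcal{F}_{t-1}]^\top]=0$, and the case $s>t$ follows by transposition. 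Hence $\mathbb{E}[S_nS_n^\top]=\sum_{t=1}^n\mathbb{E}\bigl[\mathbb{E}[\xi_t\xi_t^\top\mid\mathcal{F}_{t-1}]\bigr]$.

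By Assumption~\ref{ass:var}, each conditional covariance is $\preceq\overline{S}$, so $\mathbb{E}[S_nS_n^\top]\preceq n\overline{S}$. Congruence by the symmetric matrix $H^{-1}$ preserves the Loewner order, and the trace is monotone on positive semidefinite matrices. Therefore
\[
\mathrm{Tr}\bigl(H^{-1}\,\mathbb{E}[S_nS_n^\top]\,H^{-1}\bigr)\le n\,\mathrm{Tr}(H^{-1}\overline{S}H^{-1}).
\]
Dividing by $n^2$ gives the claimed $n^{-1}\mathrm{Tr}(H^{-1}\overline{S}H^{-1})$. Assumption~\ref{ass:convex} is used only to guarantee that $H\succeq\mu I$ is invertible. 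Setting $\overline{S}=S$ yields the stated special case.

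The one technical point is integrability. The cross-term argument requires $\mathbb{E}\|\xi_t\|^2<\infty$. This follows from Assumption~\ref{ass:var} by taking the trace and then expectations: $\mathbb{E}\|\xi_t\|^2\le\mathrm{Tr}(\overline{S})$. With this bound, the products $\xi_s\xi_t^\top$ are integrable and the conditioning steps are justified. Beyond this check, the argument is the standard orthogonality-of-increments computation, and I expect no substantive obstacle.
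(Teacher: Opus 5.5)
Your proposal is correct and follows essentially the same route as the paper's proof: the cross terms vanish by the martingale-difference property and the tower rule, the conditional covariance bound $\preceq\overline{S}$ is pushed through congruence by $H^{-1}$, and traces are taken. The differences are cosmetic: the paper bounds each diagonal term $\mathbb{E}\|H^{-1}\xi_t\|^2$ separately before summing, whereas you sum first in the Loewner order, and your integrability check $\mathbb{E}\|\xi_t\|^2\le\mathrm{Tr}(\overline{S})$ makes explicit a step the paper leaves implicit.
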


\begin{proof}
Deferred to the supplementary material.
\end{proof}

\begin{proposition}[Negligibility of the Taylor Remainder]
\label{prop:taylor}
Under Assumptions~\ref{ass:convex}--\ref{ass:iterate},
\begin{equation}
    \mathbb{E}\|T_n\| = \mathcal{O}(n^{-\alpha}).
\end{equation}
Consequently, $\sqrt{n}\,T_n \to 0$ in $L^1$.
If, in addition, there exists a constant $C_{\Delta,4}>0$ such that
\[
    \mathbb{E}\|\Delta_t\|^4 \le C_{\Delta,4}\,t^{-2\alpha}
    \qquad \text{for all } t \ge 1,
\]
then
\begin{equation}
    \mathbb{E}\|T_n\|^2 = \mathcal{O}(n^{-2\alpha}),
\end{equation}
and, hence, $\sqrt{n}\,T_n \to 0$ in $L^2$.
\end{proposition}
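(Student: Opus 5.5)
The plan is to bound $T_n = -\frac{1}{n}H^{-1}\sum_{t=1}^n u_t$ directly, using only the pointwise Taylor control $\|u_t\| \le L_R\|\Delta_t\|^2$ from \eqref{eq:expansion}, which holds almost surely because Assumption~\ref{ass:quadratic} confines every $x_t$ to $\mathcal{N}$. The other ingredients are $\|H^{-1}\|_{\mathrm{op}} \le \mu^{-1}$ (from Assumptions~\ref{ass:convex}--\ref{ass:quadratic}) and the moment rates of Assumption~\ref{ass:iterate} or of the additional fourth-moment hypothesis. No martingale structure is needed: the Taylor remainder is a sum of nonnegative-size terms, each already of order $t^{-\alpha}$, and we simply sum them.

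\emph{The $L^1$ bound.} By the triangle inequality,
\[
\mathbb{E}\|T_n\| \le \frac{1}{n\mu}\sum_{t=1}^n \mathbb{E}\|u_t\| \le \frac{L_R}{n\mu}\sum_{t=1}^n \mathbb{E}\|\Delta_t\|^2 \le \frac{L_R C_\Delta}{n\mu}\sum_{t=1}^n t^{-\alpha}.
\]
Since $\alpha<1$, we have $\sum_{t\le n} t^{-\alpha} \le n^{1-\alpha}/(1-\alpha)$. Hence $\mathbb{E}\|T_n\| \le \frac{L_R C_\Delta}{\mu(1-\alpha)}\, n^{-\alpha}$. Multiplying by $\sqrt n$ gives $\sqrt n\,\mathbb{E}\|T_n\| = O(n^{1/2-\alpha}) \to 0$, because $\alpha>1/2$. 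This is the $L^1$ convergence claimed.

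\emph{The $L^2$ bound.} Take the $L^2(\mathbb{P})$ norm of the sum and apply Minkowski's inequality rather than expanding the square:
\[
\bigl(\mathbb{E}\|T_n\|^2\bigr)^{1/2} \le \frac{1}{n\mu}\sum_{t=1}^n \bigl(\mathbb{E}\|u_t\|^2\bigr)^{1/2} \le \frac{L_R}{n\mu}\sum_{t=1}^n \bigl(\mathbb{E}\|\Delta_t\|^4\bigr)^{1/2}.
\]
By the fourth-moment hypothesis, $(\mathbb{E}\|\Delta_t\|^4)^{1/2} \le C_{\Delta,4}^{1/2}\, t^{-\alpha}$. The same harmonic-type sum estimate then gives $(\mathbb{E}\|T_n\|^2)^{1/2} = O(n^{-\alpha})$, so $\mathbb{E}\|T_n\|^2 = O(n^{-2\alpha})$. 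Consequently $n\,\mathbb{E}\|T_n\|^2 = O(n^{1-2\alpha}) \to 0$, which is the $L^2$ convergence of $\sqrt n\,T_n$.

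\emph{Main obstacle.} There is no real obstacle. The one point needing care is that the $L^2$ statement cannot follow from Assumption~\ref{ass:iterate} alone: $\mathbb{E}\|u_t\|^2$ involves fourth moments of $\Delta_t$, which is exactly why the extra hypothesis appears. Using Minkowski instead of expanding the cross terms avoids any need for correlation structure among the $u_t$; the cross terms are not martingale differences, so they could not be handled that way anyway. Everything else reduces to the elementary estimate $\sum_{t\le n} t^{-\alpha} \lesssim n^{1-\alpha}$.
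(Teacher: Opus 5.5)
Your proposal is correct and follows the paper's proof essentially line by line. For the $L^1$ bound you use the triangle inequality, $\|u_t\|\le L_R\|\Delta_t\|^2$ and $\sum_{t\le n}t^{-\alpha}=\mathcal{O}(n^{1-\alpha})$, and for the $L^2$ bound you use Minkowski's inequality with the fourth-moment hypothesis, exactly as the paper does; the only cosmetic difference is that you replace $\|H^{-1}\|_{\mathrm{op}}$ by its bound $\mu^{-1}$ and make the constants explicit.
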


\begin{proof}
Deferred to the supplementary material.
\end{proof}

\subsection{Stabilization Threshold}
\label{sec:remainder}

With $\Xi_n$ and $T_n$ controlled by Propositions~\ref{prop:variance}
and~\ref{prop:taylor}, the dynamic remainder $R_n(\{P_t\})$ is analyzed
below.  Define
\[
    M_t := (P_t H)^{-1}.
\]
The matrix $A_t$ from Lemma~\ref{lem:decomp} can be written as
\[
    A_t = \eta_t^{-1} M_t,
\]
so all explicit dependence of the dynamic remainder on the preconditioner
sequence enters through~$M_t$ (the iterates $\Delta_t$ may still depend
implicitly on the preconditioned trajectory).  The increment $A_t-A_{t-1}$
appearing in $R_n(\{P_t\})$ decomposes as
\begin{equation} \label{eq:coupled_diff}
    A_t - A_{t-1}
    = (\eta_t^{-1} - \eta_{t-1}^{-1})M_t + \eta_{t-1}^{-1}(M_t - M_{t-1}).
\end{equation}
The step-size increment
$\eta_t^{-1} - \eta_{t-1}^{-1} = \eta_0^{-1}(t^\alpha - (t-1)^\alpha)
= O(t^{\alpha-1})$ is determined entirely by the schedule.  For a fixed
preconditioner, $M_t - M_{t-1} = 0$ and only the step-size term
in~\eqref{eq:coupled_diff} survives.  Under dynamic preconditioning, the
term $\eta_{t-1}^{-1}(M_t - M_{t-1})$ is also present; its sole new
ingredient is the matrix increment $M_t - M_{t-1}$, whose decay we
quantify through the following stabilization-rate condition.

\begin{definition}[Stabilization Rate]
\label{def:beta}
We say that a preconditioner sequence $\{P_t\}$ \emph{admits stabilization
rate} $\beta > 0$ if there exists a deterministic constant $C_M > 0$ such
that $\|M_t - M_{t-1}\|_{\mathrm{op}} \leq C_M\, t^{-\beta}$ almost surely
for all $t \geq 2$, where $M_t = (P_tH)^{-1}$.
\end{definition}

Theorem~\ref{thm:sharp_threshold} below establishes the sufficiency
direction of the threshold analysis; the matching tightness statement
appears as Proposition~\ref{thm:sharpness}.

\begin{theorem}[Stabilization Threshold (Sufficiency)]
\label{thm:sharp_threshold}
Under Assumptions~\ref{ass:mg}--\ref{ass:iterate}, if $\{P_t\}$ admits
stabilization rate $\beta$ and there exists a deterministic constant
$C_P < \infty$ such that
\[
    \|M_t\|_{\mathrm{op}} \le C_P
    \qquad \text{almost surely, for all } t \ge 1,
\]
then $\sqrt{n}\,R_n(\{P_t\}) \to 0$ in $L^2$ whenever
\begin{equation} \label{eq:beta_threshold}
    \beta \;>\; \frac{\alpha+1}{2}.
\end{equation}
Since $\alpha \in (1/2, 1)$, this threshold lies in $(3/4, 1)$.
\end{theorem}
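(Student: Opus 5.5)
We bound each of the three pieces of $\sqrt{n}\,R_n(\{P_t\})$ in \eqref{eq:remainder} separately in $L^2$, using Minkowski's inequality, and show each vanishes. Throughout we use $A_t=\eta_t^{-1}M_t$, the almost-sure bound $\|M_t\|_{\mathrm{op}}\le C_P$, and the iterate bound of Assumption~\ref{ass:iterate}. Because $C_P$ and $C_M$ are deterministic, operator-norm factors can be pulled out of expectations. We never need independence between $M_t$ and $\Delta_t$.

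\emph{Boundary terms.} The first boundary term is $n^{-1/2}A_1\Delta_1$. Its $L^2$ norm is at most $n^{-1/2}\eta_1^{-1}C_P(\mathbb{E}\|\Delta_1\|^2)^{1/2}=O(n^{-1/2})$. For the second, $\|A_n\Delta_{n+1}\|\le \eta_0^{-1}n^{\alpha}C_P\|\Delta_{n+1}\|$, and $(\mathbb{E}\|\Delta_{n+1}\|^2)^{1/2}\le C_\Delta^{1/2}(n+1)^{-\alpha/2}$. Hence $\|n^{-1/2}A_n\Delta_{n+1}\|_{L^2}=O(n^{\alpha/2-1/2})\to0$, since $\alpha<1$.

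\emph{Increment sum.} We split $A_t-A_{t-1}$ as in \eqref{eq:coupled_diff}. The step-size part gives
\[
\Big\|n^{-1/2}\sum_{t=2}^n(\eta_t^{-1}-\eta_{t-1}^{-1})M_t\Delta_t\Big\|_{L^2}\le n^{-1/2}\sum_{t=2}^n C\,t^{\alpha-1}C_P\,t^{-\alpha/2}=O(n^{-1/2}n^{\alpha/2})\to0,
\]
which is the same computation as in the fixed-preconditioner case. The dynamic part is bounded using Definition~\ref{def:beta}:
\[
\Big\|n^{-1/2}\sum_{t=2}^n\eta_{t-1}^{-1}(M_t-M_{t-1})\Delta_t\Big\|_{L^2}\le n^{-1/2}\sum_{t=2}^n \eta_0^{-1}(t-1)^{\alpha}C_M t^{-\beta}C_\Delta^{1/2}t^{-\alpha/2}\lesssim n^{-1/2}\sum_{t\le n}t^{\alpha/2-\beta}.
\]
We then split into cases. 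If $\beta>1+\alpha/2$, the sum converges and the bound is $O(n^{-1/2})$. If $\beta=1+\alpha/2$, the bound is $O(n^{-1/2}\log n)$. Otherwise the bound is $O(n^{1/2+\alpha/2-\beta})$, which tends to $0$ exactly when $\beta>(\alpha+1)/2$.

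The step where the threshold appears is this last one, and it is the only place where care is needed. We are using the crude triangle inequality summed over $t$, with no cancellation. Since the increments $M_t-M_{t-1}$ are arbitrary bounded predictable matrices, no martingale structure is available to exploit. Proposition~\ref{thm:sharpness} is presumably designed to show this crude bound is attained, so we do not attempt anything finer. We would also double-check that the $L^2$ Minkowski bound $\|X_tY_t\|_{L^2}\le \|X_t\|_{\mathrm{op},\infty}\|Y_t\|_{L^2}$ is applied with the deterministic almost-sure constants. Combining the four $L^2$ bounds then gives $\sqrt{n}\,R_n\to0$ in $L^2$. Finally, the threshold lies in $(3/4,1)$ because $\alpha\in(1/2,1)$.
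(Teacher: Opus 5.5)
Your proposal is correct and follows the paper's proof essentially line by line. Like the paper, you apply Minkowski to the three pieces of \eqref{eq:remainder}, split the increment via \eqref{eq:coupled_diff}, and pull the deterministic a.s.\ constants $C_P$, $C_M$ out of the $L^2$ norms. The only cosmetic difference is that you handle large $\beta$ with an explicit case split at $\beta=1+\alpha/2$. The paper instead truncates to $\tilde\beta=\min\{1,\beta\}$, using $t^{-\beta}\le t^{-1}$ for $\beta\ge 1$, and both routes give the same conclusion.
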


\begin{proof}
Deferred to the supplementary material.
\end{proof}

\subsubsection*{A saturating construction for the sufficiency hypotheses}
The following proposition exhibits a deterministic construction that
saturates the hypotheses of Theorem~\ref{thm:sharp_threshold} while
$\sqrt{n}\,R_n(\{P_t\}) \not\to 0$ in $L^2$;
Remark~\ref{rem:sharpness_scope} below delineates the precise scope.

\begin{proposition}[Saturating Construction for the Sufficiency Hypotheses]
\label{thm:sharpness}
Fix $\alpha \in (1/2,1)$ and let $0<\beta \le (\alpha+1)/2$.  Then there exist
deterministic scalar sequences
$\{M_t\}_{t\ge1}\subset(0,\infty)$ and $\{\Delta_t\}_{t\ge1}\subset\mathbb R$
with $d=1$, $H=1$, and $P_t := M_t^{-1}$, such that
\[
\|M_t-M_{t-1}\|_{\mathrm{op}} = \Theta(t^{-\beta}),\qquad
\|M_t\|_{\mathrm{op}} = \Theta(1),\qquad
\|\Delta_t\|_{L^2} = \Theta(t^{-\alpha/2}),
\]
while
\[
\sqrt{n}\,\|R_n(\{P_t\})\|_{L^2}
=
\begin{cases}
\Theta\!\left(n^{(\alpha+1)/2-\beta}\right),
& 0<\beta<(\alpha+1)/2,\\[1mm]
\Theta(1),
& \beta=(\alpha+1)/2.
\end{cases}
\]
\end{proposition}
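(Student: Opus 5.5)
The plan is to take $d=1$ and $H=1$, and to make $M_t$ oscillate with amplitude $t^{-\beta}$ around a constant while $\Delta_t$ oscillates in phase with the increments of $M_t$. Everything is deterministic, so $\|\cdot\|_{L^2}$ is just the absolute value. Concretely, I would set
\[
M_t := 2 + (-1)^t t^{-\beta}, \qquad P_t := M_t^{-1}, \qquad \Delta_t := (-1)^t t^{-\alpha/2}.
\]
Then $1\le M_t\le 3$, so $\|M_t\|_{\mathrm{op}}=\Theta(1)$ and $P_t>0$. Also $|\Delta_t|=t^{-\alpha/2}$. The increments are
\[
M_t-M_{t-1}=(-1)^t\bigl(t^{-\beta}+(t-1)^{-\beta}\bigr),
\]
so $|M_t-M_{t-1}|=\Theta(t^{-\beta})$, and this holds for every $\beta>0$.

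Next I would substitute into \eqref{eq:remainder}, with $A_t=\eta_0^{-1}t^\alpha M_t$, and split the increment via \eqref{eq:coupled_diff}. This gives $nR_n = I_1+I_2+I_3$, where
\[
I_1 = A_1\Delta_1 - A_n\Delta_{n+1},\quad
I_2 = \sum_{t=2}^n(\eta_t^{-1}-\eta_{t-1}^{-1})M_t\Delta_t,\quad
I_3 = \sum_{t=2}^n \eta_{t-1}^{-1}(M_t-M_{t-1})\Delta_t .
\]

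The key point is that the signs in $I_3$ align, since $(-1)^t\cdot(-1)^t=1$. Every summand is therefore positive:
\[
\eta_{t-1}^{-1}(M_t-M_{t-1})\Delta_t=\eta_0^{-1}(t-1)^{\alpha}\bigl(t^{-\beta}+(t-1)^{-\beta}\bigr)t^{-\alpha/2}=\Theta\bigl(t^{\alpha/2-\beta}\bigr).
\]
Because $\beta\le(\alpha+1)/2<1+\alpha/2$, we have $\alpha/2-\beta>-1$, and comparison with an integral gives $I_3=\Theta(n^{1+\alpha/2-\beta})$ with matching upper and lower constants. Hence $n^{-1/2}I_3=\Theta(n^{(\alpha+1)/2-\beta})$. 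This is a positive power of $n$ when $\beta<(\alpha+1)/2$ and is $\Theta(1)$ at equality.

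The remaining work is to show that the other pieces are $o(1)$ after multiplying by $n^{-1/2}$.
\begin{itemize}
\item The term $A_1\Delta_1$ is a constant, so $n^{-1/2}A_1\Delta_1\to0$.
\item The boundary term satisfies $|A_n\Delta_{n+1}|\le 3\eta_0^{-1}n^\alpha n^{-\alpha/2}$, so $n^{-1/2}|A_n\Delta_{n+1}|=O(n^{(\alpha-1)/2})\to0$.
\item For $I_2$ it suffices to bound absolutely. Since $\eta_t^{-1}-\eta_{t-1}^{-1}=O(t^{\alpha-1})$ and $|M_t\Delta_t|\le 3t^{-\alpha/2}$, we get $|I_2|\le C\sum_{t\le n} t^{\alpha/2-1}=O(n^{\alpha/2})$, so $n^{-1/2}I_2=O(n^{(\alpha-1)/2})\to0$. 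This uses $\alpha<1$.
\end{itemize}

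Combining, $\sqrt n\,|R_n| = n^{-1/2}I_3 + o(1)$. In both regimes $n^{-1/2}I_3$ is bounded below by a positive constant, so $\sqrt n\,|R_n|=\Theta(n^{(\alpha+1)/2-\beta})$ for $\beta<(\alpha+1)/2$ and $\Theta(1)$ for $\beta=(\alpha+1)/2$, as claimed. The only step needing care is this final lower bound: the $o(1)$ terms must not cancel the main term. That is immediate because $I_3$ is a sum of positive terms of exact order $t^{\alpha/2-\beta}$, while the other contributions decay polynomially.
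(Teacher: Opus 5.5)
Your proposal is correct and follows essentially the paper's argument. Choosing $\Delta_t$ and $M_t-M_{t-1}$ with in-phase alternating signs makes every summand of $\sum_t\eta_{t-1}^{-1}(M_t-M_{t-1})\Delta_t$ positive and of order $t^{\alpha/2-\beta}$, while the boundary and step-size pieces are $O(n^{(\alpha-1)/2})$ after the $\sqrt n$ scaling. The only difference is cosmetic: you take $M_t=2+(-1)^t t^{-\beta}$, with increments $(-1)^t\bigl(t^{-\beta}+(t-1)^{-\beta}\bigr)$, instead of the paper's alternating partial sums $m_0+c_M\sum_{s=2}^t(-1)^s s^{-\beta}$, and this spares you the Leibniz-test bound needed there to keep $M_t$ bounded away from zero.
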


\begin{proof}
Deferred to the supplementary material.
\end{proof}

\begin{remark}[Scope of the tightness result]
\label{rem:sharpness_scope}
Proposition~\ref{thm:sharpness} is a tightness statement about the
\emph{hypotheses} of Theorem~\ref{thm:sharp_threshold}, not a lower
bound for every adaptive algorithm.  The construction produces
deterministic scalar sequences $\{M_t\}$ and $\{\Delta_t\}$ that attain
the boundedness and polynomial-decay hypotheses of
Theorem~\ref{thm:sharp_threshold} with equality, so no argument using
only these hypotheses can lower the threshold $(\alpha+1)/2$.
Crucially, the construction is \emph{decoupled}: $\{M_t\}$ is prescribed
independently of $\{\Delta_t\}$, whereas an adaptive recursion couples
the two through the dynamics.  Whether this coupling can be exploited
to lower the threshold below $(\alpha+1)/2$ for some subclass of
adaptive algorithms is an open question; any such improvement would
require structural assumptions beyond the rate hypotheses considered
here.  Elsewhere in the paper, ``the threshold $(\alpha+1)/2$ is
tight'' refers to the statement given here.
\end{remark}

\section{SA Preconditioner Constructions and Stabilization}
\label{sec:construction}

We now introduce three concrete preconditioner constructions that will be
analyzed via the stabilization theorem (Theorem~\ref{thm:sa_stab}) of
Section~\ref{sec:stab}.  In each construction, we replace
the cumulative-sum or constant-EMA updates of classical AdaGrad, RMSProp,
and ONS by SA running averages with gain $\rho_t = c/t$, $0 < c \le 1$.
Under the assumptions stated below, this yields
$\|M_t - M_{t-1}\|_{L^2(\mathrm{op})}
:= (\mathbb{E}\|M_t - M_{t-1}\|_{\mathrm{op}}^2)^{1/2}
= \mathcal{O}(t^{-1})$, which
upgrades to the pathwise rate
$\|M_t - M_{t-1}\|_{\mathrm{op}} = \mathcal{O}(t^{-1})$---i.e.,
stabilization rate $\beta = 1$---when the SA inputs are almost surely bounded.

\subsection{Stochastic Approximation Full-Matrix AdaGrad}
\label{sec:sa_adagrad}

We maintain a running average $C_t$ of the gradient outer products:
\begin{equation} \label{eq:sa_adagrad}
    C_t = (1 - \rho_t) C_{t-1} + \rho_t \left(g_t g_t^\top + \epsilon I\right),
    \quad P_t = C_{t-1}^{-1/2},
\end{equation}
where $g_t := \nabla f(x_t,\zeta_t)$, $\rho_t = c/t$ with $0 < c \le 1$,
$C_0 = \epsilon I$, and $\epsilon > 0$.  Induction gives
$C_t \succeq \epsilon I$, so $P_t$ is well defined and
$\mathcal{F}_{t-1}$-measurable.
When $c=1$, $C_t = t^{-1}\sum_{s=1}^t(g_sg_s^\top + \epsilon I)$, the
time-normalized classical AdaGrad accumulator of Duchi
et~al.~\citep{duchi2011adaptive} with a fixed $\epsilon I$ regularization.

\subsection{Stochastic Approximation RMSProp}
\label{sec:sa_rmsprop}

To obtain a diagonal adaptive rule, we track only the coordinatewise gradient
second moments:
\begin{equation} \label{eq:sa_rmsprop}
    v_t = (1 - \rho_t) v_{t-1} + \rho_t \left(g_t \odot g_t + \epsilon \mathbf{1}\right),
    \qquad
    P_t = \mathrm{Diag}(v_{t-1})^{-1/2},
\end{equation}
where $\odot$ denotes the Hadamard product, $\rho_t = c/t$ with
$0 < c \le 1$, and $v_0 = \epsilon \mathbf{1}$.
Write $s_{\mathrm{diag}} := \mathrm{diag}(S) \in \mathbb{R}^d$ for the
diagonal of the noise covariance at~$x^*$.  Heuristically, once the iterates
concentrate near~$x^*$, the recursion drives $v_t$ toward
$s_{\mathrm{diag}} + \epsilon \mathbf{1}$; this limit is made rigorous in
Proposition~\ref{prop:comparison}, whose hypotheses include continuity of
the state-dependent noise covariance
$\Sigma(x) := \mathbb{E}[\xi(x,\zeta)\xi(x,\zeta)^\top]$ at~$x^*$ and are
not in force at this stage.
As in SA-AdaGrad, $v_t \ge \epsilon\mathbf{1}$ componentwise, so $P_t$ is
well defined and $\mathcal{F}_{t-1}$-measurable.
The classical RMSProp \citep{tieleman2012rmsprop} uses a fixed decay
$\gamma$; our SA variant replaces this with the SA gain $\rho_t = c/t$,
which is the schedule required by Theorem~\ref{thm:sa_stab} to deliver
the $L^2$ one-step stabilization bound
$\|M_t - M_{t-1}\|_{L^2(\mathrm{op})} = \mathcal{O}(t^{-1})$, upgrading
to the pathwise rate $\beta = 1$ under a.s.\ bounded gradients.

\subsection{Stochastic Approximation Online Newton Step}
\label{sec:sa_ons}

We maintain a recursive estimate $B_t$ of the target Hessian $H$
using symmetric noisy Hessian estimates $\hat{H}_t$
\citep{bercu2020efficient,godichon2024hessian}:
\begin{equation} \label{eq:sa_ons}
    B_t = (1 - \rho_t) B_{t-1} + \rho_t \hat{H}_t, \quad P_t = B_{t-1}^{-1},
\end{equation}
where $\rho_t = c/t$ with $0 < c \le 1$ and $B_0 = \epsilon I$; here
$\epsilon \in [h_-,h_+]$ is required for the uniform-ellipticity bound
of Corollary~\ref{cor:ons_iterate}.
Under $h_- I \preceq \hat{H}_t \preceq h_+ I$ a.s., $B_t$ remains
positive definite, so $P_t = B_{t-1}^{-1}$ is well defined and
$\mathcal{F}_{t-1}$-measurable.
Theorem~\ref{thm:sa_stab} delivers the $L^2$ one-step stabilization bound
$\|M_t - M_{t-1}\|_{L^2(\mathrm{op})} = \mathcal{O}(t^{-1})$, upgrading to
the pathwise rate $\beta = 1$ via Theorem~\ref{thm:sa_stab}(iv), since
the a.s.\ bound $\|\hat{H}_t\|_{\mathrm{op}} \le h_+$ supplied above
applies.
If $\mathbb{E}[\hat{H}_t \mid \mathcal{F}_{t-1}] = H$ and
$\sup_t \mathbb{E}\|\hat{H}_t\|_{\mathrm{op}}^2 < \infty$, then
$B_t \xrightarrow{a.s.} H$ and
$P_t H \xrightarrow{a.s.} I$.

\subsection{Verification of the Iterate Bound under Primitive Conditions}
\label{sec:iterate_verification}

Proposition~\ref{prop:iterate_bound} reduces the iterate bound
(Assumption~\ref{ass:iterate}) to two conditions: (i)~$L$-smoothness
of~$F$ on a deterministic convex set $\mathcal{K}$ containing all
iterates almost surely, and (ii)~uniform ellipticity of~$P_t$.  The
following corollaries verify~(ii) constructively for each SA
construction under bounded gradients (for SA-AdaGrad and SA-RMSProp)
or a two-sided spectral bound on the Hessian estimates (for SA-ONS);
condition~(i)---both the $L$-smoothness of~$F$ and the
iterate-containment property---is assumed via hypothesis~(b) in each
corollary.

\begin{corollary}[Primitive conditions for SA-AdaGrad and SA-RMSProp]
\label{cor:adaptive_iterate}
Under Assumptions~\ref{ass:mg}--\ref{ass:convex}, consider either the SA-AdaGrad update \eqref{eq:sa_adagrad} or the
SA-RMSProp update \eqref{eq:sa_rmsprop} with $\rho_t = c/t$ for some
$0 < c \le 1$, and initialization $C_0 = \epsilon I$ in the AdaGrad case and
$v_0 = \epsilon \mathbf{1}$ in the RMSProp case.  Suppose:
\begin{enumerate}
  \item[(a)] the stochastic gradients satisfy $\|g_t\| \le G$ almost surely for
    all $t \ge 1$;
  \item[(b)] $F$ is $L$-smooth on a deterministic
    convex set $\mathcal{K}$ containing $x^*$ and all iterates almost surely.
\end{enumerate}
Then, for both preconditioners:
\begin{enumerate}
  \item[\textnormal{(i)}] the preconditioner is uniformly elliptic with
    \[
        \frac{1}{\sqrt{G^2+\epsilon}}\,I
        \;\preceq\;
        P_t
        \;\preceq\;
        \frac{1}{\sqrt{\epsilon}}\,I
        \qquad \text{almost surely for all } t \ge 1;
    \]
  \item[\textnormal{(ii)}] Assumption~\ref{ass:iterate} holds with
    $\mathbb{E}\|\Delta_t\|^2 \le C_\Delta\, t^{-\alpha}$.
\end{enumerate}
\end{corollary}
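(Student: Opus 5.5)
The plan is to prove part~(i) by tracking the spectrum of the SA accumulator through an induction that uses only convexity, and then to obtain part~(ii) as a direct application of Proposition~\ref{prop:iterate_bound}.

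\textbf{Spectral bounds on the accumulator.} For SA-AdaGrad, every driving term satisfies $\epsilon I \preceq g_tg_t^\top + \epsilon I \preceq (\|g_t\|^2+\epsilon) I \preceq (G^2+\epsilon)I$ almost surely, using hypothesis~(a). The initialization obeys $\epsilon I \preceq C_0 = \epsilon I \preceq (G^2+\epsilon)I$. Because $\rho_t = c/t \in (0,1]$ for all $t\ge1$ (this uses $0<c\le1$), each $C_t$ is a convex combination of $C_{t-1}$ and the driving term. The set $\{A : \epsilon I \preceq A \preceq (G^2+\epsilon)I\}$ is convex, so induction gives $\epsilon I \preceq C_t \preceq (G^2+\epsilon)I$ almost surely for all $t\ge0$.

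\textbf{Transfer to $P_t$.} Since $A\mapsto A^{-1/2}$ is operator-monotone decreasing on positive definite matrices, $C_{t-1}\in[\epsilon,G^2+\epsilon]$ in the Loewner order yields
\[
  (G^2+\epsilon)^{-1/2}I \preceq P_t = C_{t-1}^{-1/2} \preceq \epsilon^{-1/2}I .
\]
An eigenvalue argument also suffices here, because $C_{t-1}$ is symmetric and $A^{-1/2}$ acts on its eigenvalues. The RMSProp case is the same argument applied coordinatewise. We have $\epsilon \le g_{t,i}^2+\epsilon \le \|g_t\|^2+\epsilon \le G^2+\epsilon$, and the same convex-combination induction gives $v_{t,i}\in[\epsilon,G^2+\epsilon]$. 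Hence the diagonal entries of $P_t=\mathrm{Diag}(v_{t-1})^{-1/2}$ lie in $[(G^2+\epsilon)^{-1/2},\epsilon^{-1/2}]$, which proves~(i).

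\textbf{Iterate bound and predictability.} For~(ii) we invoke Proposition~\ref{prop:iterate_bound} with $p_-=(G^2+\epsilon)^{-1/2}$ and $p_+=\epsilon^{-1/2}$. Its condition~(i) is hypothesis~(b), and its condition~(ii) is what we just proved. It remains to check that $P_t$ is $\mathcal F_{t-1}$-measurable. Here $C_{t-1}$ (respectively $v_{t-1}$) is a measurable function of $g_1,\dots,g_{t-1}$, and these are functions of $x_1,\zeta_1,\dots,\zeta_{t-1}$; the iterates $x_s$ for $s\le t-1$ are themselves $\mathcal F_{s-1}$-measurable by induction. The initialization condition holds because $x_1$ is deterministic. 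Proposition~\ref{prop:iterate_bound} then yields $\mathbb E\|\Delta_t\|^2\le C_\Delta t^{-\alpha}$.

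The main care point is the predictability and well-posedness bookkeeping, since $P_t$ uses $C_{t-1}$ rather than $C_t$. The only other subtlety is confirming $\rho_1=c\le1$, so that the first step is still a convex combination; this is exactly why $c\le1$ is required. There is no genuine analytic obstacle beyond this.
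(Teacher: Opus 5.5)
Your proposal is correct and follows the paper's proof essentially step for step: the convex-combination induction placing $C_t$ (resp.\ the coordinates of $v_t$) in $[\epsilon, G^2+\epsilon]$, order reversal under $A\mapsto A^{-1/2}$, and then Proposition~\ref{prop:iterate_bound} with $p_-=(G^2+\epsilon)^{-1/2}$, $p_+=\epsilon^{-1/2}$, with hypothesis~(b) supplying its condition~(i). The differences are cosmetic. The paper additionally uses hypothesis~(a) to get $\|\xi_t\|\le 2G$ and hence the explicit choice $\overline S = 4G^2 I$ in Assumption~\ref{ass:var}; this is redundant, since that assumption is already in force. You instead spell out the predictability of $P_t$, which the paper records when it introduces the constructions.
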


\begin{proof}
Deferred to the supplementary material.
\end{proof}

The analogous result for SA-ONS replaces the bounded-gradient condition with
a two-sided spectral bound $h_- I \preceq \hat{H}_t \preceq h_+ I$ on the
Hessian estimates and uses the matrix inverse rather than the inverse square
root.

\begin{corollary}[Primitive conditions for SA-ONS]
\label{cor:ons_iterate}
Under Assumptions~\ref{ass:mg}--\ref{ass:convex}, consider the SA-ONS update
\eqref{eq:sa_ons} with $\rho_t = c/t$ for some $0 < c \le 1$ and
$B_0 = \epsilon I$.  Suppose:
\begin{enumerate}
  \item[(a)] the stochastic Hessian estimates are symmetric and satisfy
    $h_- I \preceq \hat{H}_t \preceq h_+ I$ almost surely for all $t \geq 1$
    and some deterministic constants $0 < h_- \leq h_+$, and the initialization
    is spectrally compatible: $\epsilon \in [h_-,h_+]$;
  \item[(b)] $F$ is $L$-smooth on a deterministic
    convex set $\mathcal{K}$ containing $x^*$ and all iterates almost surely.
\end{enumerate}
Then:
\begin{enumerate}
  \item[\textnormal{(i)}] the Hessian-estimate recursion satisfies
    $h_- I \preceq B_t \preceq h_+ I$ almost surely for every $t \geq 0$, so
    $P_t = B_{t-1}^{-1}$ is uniformly elliptic with
    \[
        h_+^{-1} I \;\preceq\; P_t \;\preceq\; h_-^{-1} I;
    \]
  \item[\textnormal{(ii)}] Assumption~\ref{ass:iterate} holds with
    $\mathbb{E}\|\Delta_t\|^2 \leq C_\Delta\, t^{-\alpha}$.
\end{enumerate}
\end{corollary}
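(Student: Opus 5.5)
The plan is to verify the two hypotheses of Proposition~\ref{prop:iterate_bound} directly. Part~(i) carries essentially all the content, and part~(ii) is then a one-line invocation of that proposition.

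For part~(i), I would argue by induction on $t\ge 0$ that $h_- I \preceq B_t \preceq h_+ I$ almost surely and that $B_t$ is symmetric.
\begin{itemize}
\item The base case is immediate, since $B_0=\epsilon I$ with $\epsilon\in[h_-,h_+]$ by the spectral-compatibility part of hypothesis~(a).
\item For the inductive step, note that $\rho_t=c/t\in(0,1]$ for every $t\ge1$, because $0<c\le 1$; this covers $\rho_1=c$.
\item Hence $B_t=(1-\rho_t)B_{t-1}+\rho_t\hat H_t$ is a convex combination of two symmetric matrices, each lying in the Loewner interval $[h_- I,h_+ I]$. The first does so by the inductive hypothesis, the second by hypothesis~(a).
\item The Loewner order is preserved under nonnegative linear combinations: if $h_-I\preceq X,Y\preceq h_+I$ and $\lambda\in[0,1]$, then $(1-\lambda)X+\lambda Y - h_-I=(1-\lambda)(X-h_-I)+\lambda(Y-h_-I)\succeq0$, and similarly at the upper end. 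This closes the induction.
\end{itemize}
Since $B_{t-1}$ is symmetric with spectrum in $[h_-,h_+]$, it is invertible. Its inverse $P_t=B_{t-1}^{-1}$ is symmetric with spectrum in $[h_+^{-1},h_-^{-1}]$, which is exactly $h_+^{-1}I\preceq P_t\preceq h_-^{-1}I$. One can also see this via operator monotonicity of $A\mapsto A^{-1}$ on positive definite matrices, though the spectral argument suffices.

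I would also record predictability. $B_{t-1}$ is a deterministic measurable function of $\hat H_1,\dots,\hat H_{t-1}$, and each $\hat H_s$ is built from $(x_s,\zeta_s)$, so it is $\mathcal F_s$-measurable. Hence $P_t$ is $\mathcal F_{t-1}$-measurable, as the update in Proposition~\ref{prop:iterate_bound} requires.

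For part~(ii), I would apply Proposition~\ref{prop:iterate_bound} with $p_-:=h_+^{-1}$ and $p_+:=h_-^{-1}$.
\begin{itemize}
\item Its condition~(ii) is exactly the conclusion of part~(i).
\item Its condition~(i), $L$-smoothness on a deterministic convex set $\mathcal K$ containing $x^*$ and all iterates almost surely, is hypothesis~(b) verbatim.
\item Assumptions~\ref{ass:mg}--\ref{ass:convex} are in force by hypothesis, and the initialization $x_1$ is deterministic (or has finite second moment) per the standing setup.
\end{itemize}
The proposition then yields a constant $C_\Delta$ with $\mathbb E\|\Delta_t\|^2\le C_\Delta t^{-\alpha}$ for all $t\ge1$, which is Assumption~\ref{ass:iterate}.

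I do not expect a genuine obstacle here. The only points needing care are the bookkeeping facts that make the convex-combination argument legitimate: that $\rho_1=c\le1$ keeps the first step a convex combination, and that the initialization $\epsilon$ lies inside $[h_-,h_+]$. Without the latter, the lower or upper Loewner bound could fail at early times, and would be restored only asymptotically.
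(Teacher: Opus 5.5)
Your proposal is correct and follows the paper's proof essentially step for step: the same convex-combination induction for part~(i), using $\rho_t=c/t\le 1$ and $\epsilon\in[h_-,h_+]$, then passage to $P_t=B_{t-1}^{-1}$, and for part~(ii) an application of Proposition~\ref{prop:iterate_bound} with $p_-=h_+^{-1}$ and $p_+=h_-^{-1}$. Your spectral argument for the inverse (where the paper cites order reversal under inversion) and your predictability remark are minor, harmless elaborations.
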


\begin{proof}
Deferred to the supplementary material.
\end{proof}

\noindent
Condition~(a) can always be enforced by spectrally clipping any symmetric
Hessian estimator to the interval $[h_-,h_+]$ via eigenvalue thresholding.
It also holds automatically for Tikhonov-regularized minibatch Hessians in
generalized linear models whenever the covariates are bounded and the
link-function curvature is bounded on the iterate domain, with~$h_-$
determined by the regularization strength and~$h_+$ by the uniform
curvature bound.

With the iterate bound established for all three
constructions (under bounded inputs and hypothesis~(b)),
the remaining question is whether the preconditioners
stabilize fast enough---that is, whether the rate $\beta$ in
Definition~\ref{def:beta} exceeds the threshold $(\alpha+1)/2$ identified
in Section~\ref{sec:remainder}.  The next subsection answers this
affirmatively.

\subsection{Stabilization Analysis of the SA Preconditioners}
\label{sec:stab}

Having introduced the SA preconditioners, we next study whether they stabilize
rapidly enough for the dynamic remainder in Lemma~\ref{lem:decomp} to be
negligible.  For the SA matrix recursions considered below, the gain schedule
$\rho_t = c/t$ yields an $L^2$ stabilization bound of order $t^{-1}$ for the
one-step increments of the underlying recursion, namely
$\|Q_t - Q_{t-1}\|_{L^2(\mathrm{op})} = \mathcal{O}(t^{-1})$, where $Q_t$
denotes the generic SA running average introduced in
Theorem~\ref{thm:sa_stab} below.
Under the uniform ellipticity conditions established in
Section~\ref{sec:construction}, a Lipschitz spectral map relating $Q_t$ to
$M_t = (P_tH)^{-1}$ transfers this rate to the effective inverse drift
matrix~$M_t$.  The following assumption and theorem formalize this
mechanism for a generic SA matrix recursion.

\begin{assumption}[Uniform Fourth Moment for the Gradient-Driven SA Inputs]
\label{ass:g4}
For the SA-AdaGrad and SA-RMSProp constructions, there exists a constant
$C_{g,4} > 0$ such that
\[
    \mathbb{E}\|g_t\|^4 \le C_{g,4}
    \qquad \text{for all } t \ge 1.
\]
\end{assumption}

\noindent
A uniform fourth-moment (or bounded-gradient) hypothesis on the
gradient driving the SA recursion is a standard moment condition that
appears in various forms throughout the adaptive/preconditioned SA
literature
\citep{duchi2011adaptive,bercu2020efficient,boyer2023stochastic,%
godichon2024adagrad,godichon2024hessian}; analogous bounds are used
there to control the fluctuations of the running averages that arise in
related matrix recursions, and the condition holds automatically
whenever $\|g_t\| \le G$ a.s.

\begin{theorem}[Stabilization of SA Matrix Recursions under Spectral Maps]
\label{thm:sa_stab}
Let $\{Q_t\}_{t \ge 0}$ be an SA recursion of the form
\begin{equation}\label{eq:sa_general_stab}
    Q_t = (1-\rho_t)\,Q_{t-1} + \rho_t\,W_t, \qquad \rho_t = c/t,
\end{equation}
taking values in $\mathbb{S}_{++}^d$ (or, in the diagonal-vector case,
in $\mathbb{R}_{++}^d$), where $0 < c \le 1$, $Q_0 \succeq \epsilon I$ is
deterministic, and the driving term $W_t$ is $\mathcal{F}_t$-measurable
and satisfies $W_t \succeq \epsilon I$ almost surely and
$\sup_{t}\|W_t\|_{L^2(\mathrm{op})} \le C_W < \infty$.
Suppose the preconditioner is $P_t = \varphi(Q_{t-1})$, where
$\varphi \colon \mathbb{S}_{++}^d \to \mathbb{S}_{++}^d$ is either
(a)~the matrix inverse $\varphi(A) = A^{-1}$, or
(b)~the inverse square root $\varphi(A) = A^{-1/2}$.
In the diagonal-vector case, all statements are interpreted coordinatewise
after identifying a positive vector with the corresponding diagonal matrix.
Define $M_t = (P_t H)^{-1}$ (well defined since $H \succ 0$ by Assumption~\ref{ass:convex}).

Then:
\begin{enumerate}
  \item[\textnormal{(i)}]
    $Q_t \succeq \epsilon I$ almost surely for all $t$,
    $\sup_t \|Q_t\|_{L^2(\mathrm{op})} < \infty$.
  \item[\textnormal{(ii)}]
    The one-step increment bound satisfies
    $\|Q_t - Q_{t-1}\|_{L^2(\mathrm{op})} = \mathcal{O}(t^{-1})$.
  \item[\textnormal{(iii)}]
    For either $\varphi = A^{-1}$ (used by SA-ONS) or
    $\varphi = A^{-1/2}$ (used by SA-AdaGrad and SA-RMSProp),
    $\|M_t - M_{t-1}\|_{L^2(\mathrm{op})} = \mathcal{O}(t^{-1})$ and
    $\|M_t\|_{L^2(\mathrm{op})} = \mathcal{O}(1)$.
  \item[\textnormal{(iv)}]
    If $\|W_t\|_{\mathrm{op}} \le C_{\max}$ almost surely, then
    all bounds in (ii)--(iii) hold with $\|\cdot\|_{L^2(\mathrm{op})}$ replaced by
    $\|\cdot\|_{\mathrm{op}}$, so the pathwise stabilization rate
    (Definition~\ref{def:beta}) is $\beta = 1$.
\end{enumerate}
If, in addition, there exists a deterministic matrix
$\bar W \in \mathbb{S}_{++}^d$ such that
\[
    \mathbb{E}[W_t \mid \mathcal{F}_{t-1}] = \bar W
    \qquad \text{almost surely for all } t \ge 1,
\]
then $Q_t \xrightarrow{a.s.} \bar W$.  Writing
$P_\infty := \varphi(\bar W)$ and $M_\infty := (P_\infty H)^{-1}$, we also
have $M_t \xrightarrow{a.s.} M_\infty$.
\end{theorem}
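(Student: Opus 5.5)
The approach is to work directly with the SA recursion \eqref{eq:sa_general_stab}, written as the convex combination $Q_t=(1-\rho_t)Q_{t-1}+\rho_t W_t$ with weight $\rho_t=c/t\in(0,1]$. Bounds on $Q_t$ are transferred to $P_t=\varphi(Q_{t-1})$ and $M_t=(P_tH)^{-1}=H^{-1}\varphi(Q_{t-1})^{-1}$ through Lipschitz estimates for the spectral map on $\{A\succeq\epsilon I\}$.

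\textbf{Part (i).} Since $Q_t$ is a convex combination of $Q_{t-1}\succeq\epsilon I$ and $W_t\succeq\epsilon I$, induction gives $Q_t\succeq\epsilon I$; the diagonal case is coordinatewise. By Minkowski in $L^2(\mathrm{op})$,
\[
\|Q_t\|_{L^2(\mathrm{op})}\le(1-\rho_t)\|Q_{t-1}\|_{L^2(\mathrm{op})}+\rho_t C_W ,
\]
so $\|Q_t\|_{L^2(\mathrm{op})}\le\max(\|Q_0\|_{\mathrm{op}},C_W)$ by induction.

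\textbf{Part (ii).} The one-step increment is $Q_t-Q_{t-1}=\rho_t(W_t-Q_{t-1})$, so
\[
\|Q_t-Q_{t-1}\|_{L^2(\mathrm{op})}\le \frac{c}{t}\bigl(C_W+\sup_s\|Q_s\|_{L^2(\mathrm{op})}\bigr)=\mathcal O(t^{-1}).
\]

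\textbf{Part (iii).} Here $M_t=H^{-1}\psi(Q_{t-1})$ with $\psi(A)=\varphi(A)^{-1}$. In case (a), $\psi(A)=A$. In case (b), $\psi(A)=A^{1/2}$. Hence $\|M_t\|_{\mathrm{op}}\le\mu^{-1}\|Q_{t-1}\|_{\mathrm{op}}$ in case (a) and $\|M_t\|_{\mathrm{op}}\le\mu^{-1}\|Q_{t-1}\|_{\mathrm{op}}^{1/2}$ in case (b), and either bound is $\mathcal O(1)$ in $L^2$ by (i). The only genuinely nontrivial step, and the one I expect to be the main obstacle, is the increment in case (b). It needs an operator-norm Lipschitz bound for $A\mapsto A^{1/2}$ on $\{A\succeq\epsilon I\}$:
\[
\|A^{1/2}-B^{1/2}\|_{\mathrm{op}}\le \tfrac{1}{2\sqrt\epsilon}\|A-B\|_{\mathrm{op}} .
\]
I would prove it from the integral representation $A^{1/2}=\pi^{-1}\int_0^\infty s^{-1/2}A(A+sI)^{-1}\,ds$, or more simply by writing $X=A^{1/2}-B^{1/2}$ and using the Sylvester identity $A^{1/2}X+XB^{1/2}=A-B$. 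Taking $v$ to be a top eigenvector of the symmetric matrix $X$, the identity gives $|v^\top(A-B)v|=|\lambda|\,v^\top(A^{1/2}+B^{1/2})v\ge2\sqrt\epsilon\,|\lambda|$. Applying this with $A=Q_{t-1}$, $B=Q_{t-2}$ and combining with (ii) yields $\|M_t-M_{t-1}\|_{L^2(\mathrm{op})}\le(2\sqrt\epsilon\,\mu)^{-1}\|Q_{t-1}-Q_{t-2}\|_{L^2(\mathrm{op})}=\mathcal O(t^{-1})$. Case (a) is immediate since $M_t-M_{t-1}=H^{-1}(Q_{t-1}-Q_{t-2})$. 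The index shift from $t$ to $t-1$ is harmless.

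\textbf{Part (iv).} Repeat the same inductions pathwise: $\|Q_t\|_{\mathrm{op}}\le\max(\|Q_0\|_{\mathrm{op}},C_{\max})$ almost surely, and every subsequent bound becomes deterministic. This gives $\|M_t-M_{t-1}\|_{\mathrm{op}}\le C_M t^{-1}$ almost surely, i.e.\ $\beta=1$ in Definition~\ref{def:beta}.

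\textbf{Convergence.} Set $D_t=Q_t-\bar W$ and $\varepsilon_t=W_t-\bar W$, a matrix martingale difference bounded in $L^2$. Then $D_t=(1-\rho_t)D_{t-1}+\rho_t\varepsilon_t$. For $c=1$, $Q_t=t^{-1}\bigl(\sum_{s\le t}W_s\bigr)$ up to the initial term, and the martingale SLLN (using $\sum_s s^{-2}\mathbb E\|\varepsilon_s\|^2<\infty$) gives $D_t\to0$ almost surely. For general $c$, unroll $D_t=\Pi_{0,t}D_0+\sum_s\Pi_{s,t}\rho_s\varepsilon_s$ with $\Pi_{s,t}=\prod_{s<r\le t}(1-c/r)\asymp(s/t)^c$. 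The deterministic term vanishes because $\sum\rho_t=\infty$. For the stochastic term, the series $\sum_s s^{-1}\varepsilon_s$ converges almost surely, and Kronecker's lemma with weights $t^c$ gives $t^{-c}\sum_s s^{c-1}\varepsilon_s\to0$. Equivalently, one can invoke the Robbins--Siegmund theorem on $\|D_t\|_F^2$.

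Finally, $\varphi$ is continuous on $\mathbb S_{++}^d$, so $M_t=H^{-1}\varphi(Q_{t-1})^{-1}\to H^{-1}\varphi(\bar W)^{-1}=(P_\infty H)^{-1}=M_\infty$ almost surely by the continuous mapping theorem.
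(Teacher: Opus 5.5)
Your proposal is correct and follows the paper's proof essentially step for step. The shared structure is: convex-combination induction and the $L^2(\mathrm{op})$ contraction for (i); the identity $Q_t-Q_{t-1}=\rho_t(W_t-Q_{t-1})$ for (ii); transfer through $M_t=H^{-1}\varphi(Q_{t-1})^{-1}$ with the $(2\sqrt{\epsilon})^{-1}$-Lipschitz bound for $A\mapsto A^{1/2}$ on $\{A\succeq\epsilon I\}$ for (iii), where your Sylvester-identity/eigenvector argument is a clean self-contained replacement for the paper's citation of Bhatia; the same bounds run pathwise for (iv); and Robbins--Siegmund on the squared error for the almost-sure limit, which the paper applies entrywise.

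If you prefer your Kronecker route for general $c$, note that $\Pi_{s,t}\asymp(s/t)^c$ is only a two-sided comparison, so it cannot be substituted into a signed sum. Instead use the exact weights $\gamma_s:=\prod_{r=2}^{s}(1-c/r)^{-1}\uparrow\infty$, for which $\Pi_{s,t}=\gamma_s/\gamma_t$ when $1\le s\le t$. Then apply Kronecker's lemma with $b_s=\gamma_s$ to the almost surely convergent series $\sum_s c\,\varepsilon_s/s$.
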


\begin{proof}
Deferred to the supplementary material.
\end{proof}

Theorem~\ref{thm:sa_stab} is stated for a generic SA recursion.  To apply
it, one simply identifies the recursion variable $Q_t$, the driving term $W_t$,
and the map $\varphi$ for each of the three constructions introduced in
Sections~\ref{sec:sa_adagrad}--\ref{sec:sa_ons}.  The following corollary
records the result.

\begin{corollary}[Stabilization of All Three SA Preconditioners]
\label{cor:sa_stab}
Each of the three SA preconditioners satisfies
$\|M_t - M_{t-1}\|_{L^2(\mathrm{op})} = \mathcal{O}(t^{-1})$ and
$\|M_t\|_{L^2(\mathrm{op})} = \mathcal{O}(1)$, under the following
conditions:
\begin{itemize}
  \item \textbf{SA-AdaGrad} \eqref{eq:sa_adagrad}: Assumptions~\ref{ass:convex}
    and~\ref{ass:g4}, with $Q_t = C_t$, $W_t = g_tg_t^\top + \epsilon I$,
    $\varphi = A^{-1/2}$.
  \item \textbf{SA-RMSProp} \eqref{eq:sa_rmsprop}: same assumptions, applied coordinatewise
    with $Q_t = \mathrm{Diag}(v_t)$, $W_t = \mathrm{Diag}(g_t \odot g_t + \epsilon \mathbf{1})$, $\varphi = A^{-1/2}$.
  \item \textbf{SA-ONS} \eqref{eq:sa_ons}: Assumption~\ref{ass:convex} with
    $\hat{H}_t \succeq \epsilon I$ a.s.,
    $\mathbb{E}[\hat{H}_t \mid \mathcal{F}_{t-1}] = H$,
    $\sup_{t}\mathbb{E}\|\hat{H}_t\|_{\mathrm{op}}^2 \le C_H$;
    here $Q_t = B_t$, $W_t = \hat{H}_t$, $\varphi = A^{-1}$, and
    $M_t \xrightarrow{a.s.} I$.
\end{itemize}
If, in addition, the SA inputs are almost surely bounded
($\|g_t\| \le G$ for AdaGrad/RMSProp, or
$\|\hat{H}_t\|_{\mathrm{op}} \le h_+$ for ONS), then
Theorem~\ref{thm:sa_stab}(iv) yields the pathwise stabilization rate
$\beta = 1$.
\end{corollary}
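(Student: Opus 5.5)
The corollary follows by instantiating Theorem~\ref{thm:sa_stab} three times. For each construction I would identify $(Q_t,W_t,\varphi)$ as in the statement and check the theorem's three hypotheses: $Q_0\succeq\epsilon I$ deterministic, $W_t\succeq\epsilon I$ a.s.\ and $\mathcal{F}_t$-measurable, and $\sup_t\|W_t\|_{L^2(\mathrm{op})}<\infty$. Parts (ii)--(iii) of the theorem then give $\|M_t-M_{t-1}\|_{L^2(\mathrm{op})}=\mathcal{O}(t^{-1})$ and $\|M_t\|_{L^2(\mathrm{op})}=\mathcal{O}(1)$. Part~(iv) gives the pathwise rate $\beta=1$ once an a.s.\ bound on $\|W_t\|_{\mathrm{op}}$ is exhibited.

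\emph{SA-AdaGrad.} Take $Q_t=C_t$, $C_0=\epsilon I$ and $W_t=g_tg_t^\top+\epsilon I$.
\begin{itemize}
\item $W_t\succeq\epsilon I$ holds trivially, and $W_t$ is $\mathcal{F}_t$-measurable because $x_t$ is $\mathcal{F}_{t-1}$-measurable and $g_t=\nabla f(x_t,\zeta_t)$.
\item Since $\|g_tg_t^\top\|_{\mathrm{op}}=\|g_t\|^2$, we get $\mathbb{E}\|W_t\|_{\mathrm{op}}^2\le 2\,\mathbb{E}\|g_t\|^4+2\epsilon^2\le 2C_{g,4}+2\epsilon^2$ by Assumption~\ref{ass:g4}.
\item $P_t=C_{t-1}^{-1/2}=\varphi(Q_{t-1})$ with $\varphi(A)=A^{-1/2}$, and Assumption~\ref{ass:convex} makes $M_t$ well defined.
\item Under $\|g_t\|\le G$ we have $\|W_t\|_{\mathrm{op}}\le G^2+\epsilon$, so part~(iv) applies.
\end{itemize}

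\emph{SA-RMSProp.} This is identical, working coordinatewise with $Q_t=\mathrm{Diag}(v_t)$ and $W_t=\mathrm{Diag}(g_t\odot g_t+\epsilon\mathbf{1})$.
\begin{itemize}
\item $\|W_t\|_{\mathrm{op}}=\max_i g_{t,i}^2+\epsilon\le\|g_t\|^2+\epsilon$, so the same moment bound and the same a.s.\ bound hold.
\item The diagonal-vector case is explicitly covered by Theorem~\ref{thm:sa_stab}.
\end{itemize}

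\emph{SA-ONS.} Take $Q_t=B_t$, $B_0=\epsilon I$, $W_t=\hat H_t$ and $\varphi(A)=A^{-1}$, so that $P_t=B_{t-1}^{-1}$.
\begin{itemize}
\item The hypotheses $\hat H_t\succeq\epsilon I$ and $\sup_t\mathbb{E}\|\hat H_t\|_{\mathrm{op}}^2\le C_H$ are exactly the theorem's conditions.
\item $\hat H_t$ is $\mathcal{F}_t$-measurable by construction, since it is built from $x_t$ and $\zeta_t$.
\item The unbiasedness $\mathbb{E}[\hat H_t\mid\mathcal{F}_{t-1}]=H$ supplies the conditional-mean clause of Theorem~\ref{thm:sa_stab} with $\bar W=H\in\mathbb{S}_{++}^d$. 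Hence $P_\infty=H^{-1}$ and $M_\infty=(H^{-1}H)^{-1}=I$, so $M_t\to I$ a.s.
\item Under $\|\hat H_t\|_{\mathrm{op}}\le h_+$, part~(iv) gives $\beta=1$.
\end{itemize}

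The only step needing care is the measurability and moment bookkeeping. In particular, $\mathcal{F}_t$-measurability of $W_t$ relies on the iterate being predictable, and the fourth-moment assumption must be converted into the $L^2(\mathrm{op})$ bound on $W_t$. Beyond that, the argument is a direct specialization of Theorem~\ref{thm:sa_stab}.
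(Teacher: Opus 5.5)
Your proposal is correct and follows the paper's proof essentially verbatim. Both instantiate Theorem~\ref{thm:sa_stab} for each construction and bound $\|W_t\|_{L^2(\mathrm{op})}$ through Assumption~\ref{ass:g4}; the paper uses Minkowski to get $C_{g,4}^{1/2}+\epsilon$ where you use $(a+b)^2\le 2a^2+2b^2$. Both then invoke the deterministic-target clause with $\bar W = H$ for SA-ONS and apply part~(iv) under almost-surely bounded inputs, with your explicit $\mathcal{F}_t$-measurability check being a minor addition the paper leaves implicit.
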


\begin{proof}
Deferred to the supplementary material.
\end{proof}

Corollary~\ref{cor:sa_stab} shows that each SA preconditioner achieves
$L^2$ one-step stabilization of order $t^{-1}$, which upgrades to the
pathwise rate $\beta = 1$ under bounded inputs---exceeding the threshold
$(\alpha+1)/2$ of Theorem~\ref{thm:sharp_threshold} for every
$\alpha \in (1/2,1)$.  The next corollary combines the one-step bounds
with the step-size schedule to control the coupled quantity
$\eta_t^{-1}M_t - \eta_{t-1}^{-1}M_{t-1}$ needed for the finite-sample
remainder bound (Proposition~\ref{prop:tight}).

\begin{remark}[Generic features of the stabilization argument]
\label{rem:generic_scope}
The proof of Theorem~\ref{thm:sa_stab} uses only the recursion form
$Q_t = (1-\rho_t)Q_{t-1} + \rho_t W_t$ with $\rho_t = c/t$, the
spectral maps $\varphi \in \{A\mapsto A^{-1},\; A\mapsto A^{-1/2}\}$,
and the ellipticity and moment hypotheses on $W_t$ stated in
Theorem~\ref{thm:sa_stab}; it makes no further structural use of the
particular adaptive method from which $Q_t$ arises.
\end{remark}

\begin{corollary}[Coupled One-Step Variation of $\eta_t^{-1}M_t$]
\label{cor:threshold}
Under the respective hypotheses of Corollary~\ref{cor:sa_stab}, each of the three SA preconditioners satisfies
\begin{align*}
    \bigl\|\eta_t^{-1}M_t - \eta_{t-1}^{-1}M_{t-1}\bigr\|_{L^2(\mathrm{op})}
    &\le
    \bigl|\eta_t^{-1} - \eta_{t-1}^{-1}\bigr|\,\|M_t\|_{L^2(\mathrm{op})}\\
    &\quad + \eta_{t-1}^{-1}\|M_t - M_{t-1}\|_{L^2(\mathrm{op})},
\end{align*}
and the right-hand side is $\mathcal{O}(t^{\alpha-1})$.
Under the additional almost-sure boundedness assumptions in
Corollary~\ref{cor:sa_stab}, the bound
$\|M_t - M_{t-1}\|_{\mathrm{op}} = \mathcal{O}(t^{-1})$ holds pathwise, so
$M_t$ attains the stabilization rate $\beta=1$ in the sense of
Definition~\ref{def:beta}.
\end{corollary}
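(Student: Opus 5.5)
The proof splits \(A_t - A_{t-1}\) exactly as in \eqref{eq:coupled_diff} and bounds each piece in \(L^2(\mathrm{op})\) with Minkowski's inequality. Since \(\|\cdot\|_{L^2(\mathrm{op})}\) is a norm on random matrices, the first displayed inequality follows directly:
\[
\bigl\|\eta_t^{-1}M_t - \eta_{t-1}^{-1}M_{t-1}\bigr\|_{L^2(\mathrm{op})}
\le \bigl\|(\eta_t^{-1}-\eta_{t-1}^{-1})M_t\bigr\|_{L^2(\mathrm{op})}
+ \bigl\|\eta_{t-1}^{-1}(M_t-M_{t-1})\bigr\|_{L^2(\mathrm{op})}.
\]
The scalars \(\eta_t^{-1}-\eta_{t-1}^{-1}\) and \(\eta_{t-1}^{-1}\) are deterministic, so they come out of the norm. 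Positivity is not needed, because we take absolute values, and in fact \(\eta_t^{-1}-\eta_{t-1}^{-1}>0\) anyway.

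Next comes the rate. We have \(\eta_t^{-1}-\eta_{t-1}^{-1} = \eta_0^{-1}(t^\alpha-(t-1)^\alpha)\). By the mean value theorem this is at most \(\eta_0^{-1}\alpha (t-1)^{\alpha-1}\le \eta_0^{-1}\alpha\,2^{1-\alpha} t^{\alpha-1}\) for \(t\ge2\). Corollary~\ref{cor:sa_stab} gives \(\|M_t\|_{L^2(\mathrm{op})}=\mathcal{O}(1)\) for each construction, so the first term is \(\mathcal{O}(t^{\alpha-1})\). For the second term, \(\eta_{t-1}^{-1}=\eta_0^{-1}(t-1)^\alpha\le \eta_0^{-1}t^\alpha\). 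Corollary~\ref{cor:sa_stab} (via Theorem~\ref{thm:sa_stab}(iii)) gives \(\|M_t-M_{t-1}\|_{L^2(\mathrm{op})}=\mathcal{O}(t^{-1})\), so the product is \(\mathcal{O}(t^{\alpha-1})\). Summing the two pieces yields the claimed \(\mathcal{O}(t^{\alpha-1})\), with constants depending only on \(\eta_0,\alpha\) and the constants of Corollary~\ref{cor:sa_stab}.

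For the pathwise statement, under the bounded-input hypotheses we set \(W_t\) to the respective driving term: \(g_tg_t^\top+\epsilon I\), \(\mathrm{Diag}(g_t\odot g_t+\epsilon\mathbf 1)\), or \(\hat H_t\). This satisfies \(\|W_t\|_{\mathrm{op}}\le G^2+\epsilon\) or \(\le h_+\) almost surely, so Theorem~\ref{thm:sa_stab}(iv) applies. It gives a deterministic \(C_M\) with \(\|M_t-M_{t-1}\|_{\mathrm{op}}\le C_M t^{-1}\) a.s. for \(t\ge 2\), which is Definition~\ref{def:beta} with \(\beta=1\). We should check that the constants produced in (iv) are indeed deterministic, since Definition~\ref{def:beta} requires this. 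They are built from \(\epsilon\), \(C_{\max}\), \(c\) and \(\|H\|_{\mathrm{op}}\), \(\|H^{-1}\|_{\mathrm{op}}\) through the Lipschitz constants of \(\varphi\) on \([\epsilon,C_{\max}]\)-spectrum matrices, so this holds.

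There is no real obstacle, because everything is inherited from Corollary~\ref{cor:sa_stab}. The only point needing care is the index \(t=1\), where \(\eta_0^{-1}\) and \(M_0\) appear. We either restrict to \(t\ge2\) or define \(M_0\) from \(Q_{-1}:=Q_0\) so that the increment vanishes; either way the \(\mathcal{O}(\cdot)\) statement is unaffected.
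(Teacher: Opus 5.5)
Your proposal is correct and follows the paper's proof essentially step for step: the triangle inequality applied to the split \eqref{eq:coupled_diff}, the mean value theorem for $\eta_t^{-1}-\eta_{t-1}^{-1}$, the $\mathcal{O}(1)$ and $\mathcal{O}(t^{-1})$ bounds from Corollary~\ref{cor:sa_stab}, and Theorem~\ref{thm:sa_stab}(iv) for the pathwise rate $\beta=1$. Your extra care about the constant $C_M$ being deterministic and about restricting to $t\ge 2$ is useful precision that the paper leaves implicit. One small correction: the map whose Lipschitz constant matters is $A\mapsto A^{1/2}$ (constant $1/(2\sqrt{\epsilon})$ on $A\succeq\epsilon I$), not $\varphi$ itself.
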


\begin{proof}
Deferred to the supplementary material.
\end{proof}

The one-step stabilization of Corollary~\ref{cor:sa_stab} and the
coupled variation $\mathcal{O}(t^{\alpha-1})$ of
Corollary~\ref{cor:threshold} feed into the analysis of
Section~\ref{sec:main} through two pathways:
\begin{itemize}
  \item \emph{$L^2$ pathway (no bounded-input assumption).}
    The $L^2(\mathrm{op})$ bounds from Corollaries~\ref{cor:sa_stab}
    and~\ref{cor:threshold} suffice to prove
    $\sqrt{n}\,R_n \to 0$ in $L^1$ via a Cauchy--Schwarz argument
    (Proposition~\ref{prop:l2_remainder} below), which is enough for the CLT
    (Theorem~\ref{thm:clt}).
  \item \emph{Pathwise pathway (bounded inputs).}
    Under the additional bounded-input conditions in
    Corollary~\ref{cor:sa_stab} ($\|g_t\| \le G$ or
    $\|\hat{H}_t\|_{\mathrm{op}} \le h_+$), the $L^2$ bounds upgrade
    to pathwise (a.s.)\ bounds, which feed into
    Proposition~\ref{prop:tight} and yield the explicit finite-sample
    constant~$C_R$.  The operator-factor comparison
    (Proposition~\ref{prop:comparison}) then builds on these pathwise
    bounds together with additional construction-specific hypotheses,
    including continuity of the noise covariance $\Sigma(x)$ at~$x^*$.
\end{itemize}

\section{Finite-Sample and Asymptotic Analysis}
\label{sec:main}

\subsection{Finite-Sample Control of the Dynamic Remainder}
\label{sec:tight}

This section propagates the one-step stabilization bounds of
Section~\ref{sec:stab} through the exact decomposition of
Lemma~\ref{lem:decomp} to control $\sqrt{n}\,R_n$.  The analysis
splits into two tiers with distinct downstream purposes, which we
state upfront.

\begin{itemize}
  \item \textbf{Default tier ($L^2$ pathway).}
    Proposition~\ref{prop:l2_remainder} establishes
    $\mathbb{E}\|\sqrt{n}\,R_n\| = \mathcal{O}(n^{(\alpha-1)/2}) \to 0$
    using only the $L^2(\mathrm{op})$ stabilization bounds of
    Corollaries~\ref{cor:sa_stab}--\ref{cor:threshold}.  \emph{No
    bounded-input hypothesis is required.}  This is the route used by
    Theorem~\ref{thm:clt} (CLT) and it covers all three SA
    preconditioners, including in settings with Gaussian covariates or
    other unbounded inputs, provided the SA-input conditions of each
    construction in Corollary~\ref{cor:sa_stab} hold
    (Assumption~\ref{ass:g4} for SA-AdaGrad and SA-RMSProp; the
    ellipticity floor $\hat{H}_t \succeq \epsilon I$, the
    conditional-mean constancy
    $\mathbb{E}[\hat{H}_t \mid \mathcal{F}_{t-1}] = H$, and the
    second-moment bound
    $\sup_t \mathbb{E}\|\hat{H}_t\|_{\mathrm{op}}^2 \le C_H$ for
    SA-ONS).
  \item \textbf{Refinement tier (pathwise hypotheses).}
    Proposition~\ref{prop:tight} is a \emph{separate, conditional}
    strengthening of Proposition~\ref{prop:l2_remainder}: under the
    additional \emph{pathwise} (a.s.)\ bounded-input hypotheses
    ($\|g_t\| \le G$ for SA-AdaGrad and SA-RMSProp; the two-sided
    spectral bound $h_- I \preceq \hat{H}_t \preceq h_+ I$ for SA-ONS,
    which deterministically implies both the ellipticity floor
    $\hat{H}_t \succeq h_- I$ and the operator-norm bound
    $\|\hat{H}_t\|_{\mathrm{op}} \le h_+$, and, thus, supersedes the
    second-moment bound of the default tier), which upgrade the
    $L^2(\mathrm{op})$ stabilization of
    Corollary~\ref{cor:sa_stab} to pathwise operator bounds and supply
    the pathwise ellipticity of $P_t$ through
    Corollaries~\ref{cor:adaptive_iterate}
    and~\ref{cor:ons_iterate}, it upgrades the first-moment
    $\mathcal{O}(n^{(\alpha-1)/2})$ bound to
    the $L^2$ bound
    $\mathbb{E}\|\sqrt{n}\,R_n\|^2 = \mathcal{O}(n^{\alpha-1})$
    with an \emph{explicit finite-sample constant}~$C_R$, an
    $L^1\!\to\! L^2$ moment upgrade at the same effective rate.
    The constant $C_R$ drives the operator-factor comparison
    across preconditioners (Proposition~\ref{prop:comparison}) and the
    quantitative Wasserstein bound (Corollary~\ref{cor:gauss_approx}).
    Neither of these downstream consequences is needed for the CLT.
\end{itemize}
In particular, when the bounded-input hypotheses fail (e.g., Gaussian
covariates in Section~\ref{sec:simulation}), the default tier still
delivers the CLT; only the refinement tier's explicit constant $C_R$
and its two downstream consequences (operator-factor comparison and
the Wasserstein bound) are unavailable.

\begin{proposition}[$L^2$ Pathway: First-Moment Control of $\sqrt{n}\,R_n$]
\label{prop:l2_remainder}
Under Assumptions~\ref{ass:mg}--\ref{ass:iterate}, suppose the effective
inverse drift matrix satisfies the $L^2$ stabilization bounds
\[
    \|M_t\|_{L^2(\mathrm{op})} = \mathcal{O}(1),
    \qquad
    \bigl\|\eta_t^{-1}M_t - \eta_{t-1}^{-1}M_{t-1}\bigr\|_{L^2(\mathrm{op})}
    = \mathcal{O}(t^{\alpha-1}).
\]
Then
\[
    \mathbb{E}\bigl\|\sqrt{n}\,R_n(\{P_t\})\bigr\|
    \;=\;
    \mathcal{O}\!\left(n^{(\alpha-1)/2}\right)
    \;\longrightarrow\; 0
    \qquad \text{as } n \to \infty,
\]
so $\sqrt{n}\,R_n \to 0$ in $L^1$ and, hence, in probability.  The
hypotheses hold for all three SA preconditioners of
Section~\ref{sec:construction} under the $L^2$ conditions of
Corollaries~\ref{cor:sa_stab}--\ref{cor:threshold}, with no
bounded-input assumption.
\end{proposition}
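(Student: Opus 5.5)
We start from the explicit form of the dynamic remainder in Lemma~\ref{lem:decomp}. Writing $A_t=\eta_t^{-1}M_t$,
\[
\sqrt{n}\,R_n=\frac{1}{\sqrt n}\Bigl[A_1\Delta_1-A_n\Delta_{n+1}+\sum_{t=2}^n (A_t-A_{t-1})\Delta_t\Bigr],
\]
and we bound the first moment of each of the three pieces separately by the triangle inequality. The basic tool is the Cauchy--Schwarz inequality in the form $\mathbb{E}\|B\Delta\|\le \|B\|_{L^2(\mathrm{op})}\,\|\Delta\|_{L^2}$. This requires no independence between the matrix factor and the iterate, which is essential because $M_t$ and $\Delta_t$ are coupled through the adaptive recursion. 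Assumption~\ref{ass:iterate} supplies $\|\Delta_t\|_{L^2}\le C_\Delta^{1/2}t^{-\alpha/2}$.

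\emph{Boundary terms.} $A_1\Delta_1$ is a fixed $L^1$ quantity times $n^{-1/2}$. Here $\mathbb{E}\|A_1\Delta_1\|\le \eta_1^{-1}\|M_1\|_{L^2(\mathrm{op})}\|\Delta_1\|_{L^2}<\infty$, so this term contributes $O(n^{-1/2})$, which is $o(n^{(\alpha-1)/2})$ since $\alpha>0$. For the terminal term,
\[
\mathbb{E}\|A_n\Delta_{n+1}\|\le \eta_0^{-1}n^{\alpha}\,\|M_n\|_{L^2(\mathrm{op})}\,C_\Delta^{1/2}(n+1)^{-\alpha/2}=O(n^{\alpha/2}),
\]
and after multiplying by $n^{-1/2}$ it contributes $O(n^{(\alpha-1)/2})$. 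This is exactly the claimed rate, so the terminal term is rate-determining.

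\emph{Abel sum.} For each $t$ we apply the same Cauchy--Schwarz step with $B=A_t-A_{t-1}$. The hypothesis $\|A_t-A_{t-1}\|_{L^2(\mathrm{op})}=O(t^{\alpha-1})$ gives $\mathbb{E}\|(A_t-A_{t-1})\Delta_t\|=O(t^{\alpha-1}t^{-\alpha/2})=O(t^{\alpha/2-1})$. Summing over $t$,
\[
\sum_{t=2}^n t^{\alpha/2-1}=O(n^{\alpha/2}),
\]
because $\alpha/2-1>-1$. Multiplying by $n^{-1/2}$ again yields $O(n^{(\alpha-1)/2})$. Since $\alpha<1$, all three contributions tend to $0$. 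This proves $L^1$ convergence, and convergence in probability then follows from Markov's inequality.

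\emph{Applicability to the SA preconditioners.} For the final sentence of the proposition we invoke Corollaries~\ref{cor:sa_stab} and~\ref{cor:threshold}. They give precisely $\|M_t\|_{L^2(\mathrm{op})}=O(1)$ and the coupled $O(t^{\alpha-1})$ bound for all three SA constructions, using only moment hypotheses and no bounded inputs.

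The main obstacle is the coupling between $M_t$ and $\Delta_t$: a product bound in $L^1$ cannot be split into a product of expectations. We avoid this by paying one $L^2$ norm on each factor, at the cost of a first-moment rather than second-moment conclusion. A second-moment bound would require $L^4$ control of the iterates or pathwise operator bounds, which is deferred to Proposition~\ref{prop:tight}. The only remaining point to check is that the step-size part $(\eta_t^{-1}-\eta_{t-1}^{-1})M_t$ is already absorbed in the assumed coupled bound, so no separate treatment of it is needed.
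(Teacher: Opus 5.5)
Your proposal is correct and follows the paper's proof essentially line by line. You use the same three-term split of $nR_n$, the same Cauchy--Schwarz step $\mathbb{E}\|B\Delta\|\le\|B\|_{L^2(\mathrm{op})}\|\Delta\|_{L^2}$ to handle the coupling between $M_t$ and $\Delta_t$, and the same power-sum estimate $\sum_{t\le n}t^{\alpha/2-1}=O(n^{\alpha/2})$, which gives the rate-determining $O(n^{(\alpha-1)/2})$ contributions from the terminal and Abel-sum terms.
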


\begin{proof}
Deferred to the supplementary material.
\end{proof}

\begin{proposition}[Finite-Sample $L^2$ Bound under Pathwise Hypotheses]
\label{prop:tight}
Under Assumptions~\ref{ass:mg}--\ref{ass:iterate}, suppose an SA construction
satisfies
\[
    \|M_t\|_{\mathrm{op}} \leq C_P
    \qquad \text{almost surely for all } t \ge 1,
\]
and there exists an index $t_0 \ge 2$ such that
\[
    \bigl\|\eta_t^{-1}M_t - \eta_{t-1}^{-1}M_{t-1}\bigr\|_{\mathrm{op}}
    \leq C_{\mathrm{coup}}\, t^{\alpha-1}
\]
almost surely for all $t \ge t_0$.  Then there exists a constant
$C_R > 0$, depending on
$\eta_0$, $\alpha$, $C_\Delta$, $C_P$, $C_{\mathrm{coup}}$, $t_0$,
such that
\begin{equation}
    \mathbb{E}\|R_n(\{P_t\})\|^2 \;\leq\; \frac{C_R}{n^{2-\alpha}}.
\end{equation}
Equivalently, $\|R_n\|_{L^2} = \mathcal{O}(n^{\alpha/2 - 1})$.
Consequently,
\[
    \sqrt{n}\,\|R_n\|_{L^2}
    =
    \mathcal{O}(n^{(\alpha-1)/2})
    \to 0
    \qquad \text{for all } \alpha < 1.
\]
\end{proposition}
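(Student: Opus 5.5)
We start from the explicit formula \eqref{eq:remainder} for the dynamic remainder, written with $A_t=\eta_t^{-1}M_t$:
\[
  nR_n = A_1\Delta_1 - A_n\Delta_{n+1} + \sum_{t=2}^n (A_t-A_{t-1})\Delta_t .
\]
By Minkowski's inequality in $L^2$, it suffices to bound the three pieces separately and show that each is $\mathcal{O}(n^{\alpha/2})$ in $L^2$. That gives $\|nR_n\|_{L^2}=\mathcal{O}(n^{\alpha/2})$, hence $\mathbb{E}\|R_n\|^2\le C_R n^{\alpha-2}$, which is the claim.

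\emph{Boundary terms.} The first term is deterministic up to $\Delta_1$: $\|A_1\Delta_1\|_{L^2}\le \eta_0^{-1}C_P\,(\mathbb{E}\|\Delta_1\|^2)^{1/2}$, which is $\mathcal{O}(1)$. For the second term, the almost-sure bound $\|M_n\|_{\mathrm{op}}\le C_P$ together with Assumption~\ref{ass:iterate} gives
\[
  \|A_n\Delta_{n+1}\|_{L^2}\le \eta_0^{-1}n^\alpha C_P\, C_\Delta^{1/2}(n+1)^{-\alpha/2}=\mathcal{O}(n^{\alpha/2}).
\]
This term already saturates the target rate, so no better bound is needed elsewhere.

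\emph{Sum term.} Split the sum at $t_0$.
\begin{itemize}
\item For $2\le t<t_0$: bound $\|A_t-A_{t-1}\|_{\mathrm{op}}\le 2\eta_0^{-1}t_0^{\alpha}C_P$ almost surely. This contributes a constant depending on $t_0$, $\eta_0$, $C_P$ and $C_\Delta$.
\item For $t\ge t_0$: the coupled hypothesis holds pathwise, so we can pull the deterministic bound out. Minkowski's inequality gives
\[
  \Bigl\|\sum_{t=t_0}^n (A_t-A_{t-1})\Delta_t\Bigr\|_{L^2}
  \le \sum_{t=t_0}^n C_{\mathrm{coup}}\,t^{\alpha-1}\,\|\Delta_t\|_{L^2}
  \le C_{\mathrm{coup}}C_\Delta^{1/2}\sum_{t\le n} t^{\alpha/2-1}
  \le \frac{2C_{\mathrm{coup}}C_\Delta^{1/2}}{\alpha}\,n^{\alpha/2}.
\]
\end{itemize}
Pulling the operator norm out before taking expectations is exactly where pathwise (rather than $L^2$) control of $M_t$ is needed. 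With only $L^2(\mathrm{op})$ bounds one would be forced into Cauchy--Schwarz, which only yields the first-moment statement of Proposition~\ref{prop:l2_remainder}.

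\emph{Combining.} Squaring the sum of the three bounds and using $(a+b+c)^2\le 3(a^2+b^2+c^2)$ gives
\[
  \mathbb{E}\|R_n\|^2\le n^{-2}\cdot 3\bigl(\mathcal{O}(1)+\mathcal{O}(n^{\alpha})\bigr)\le C_R\,n^{\alpha-2}.
\]
The constant $C_R$ is explicit in $\eta_0,\alpha,C_\Delta,C_P,C_{\mathrm{coup}},t_0$ (absorbing $\mathcal{O}(1)\le n^\alpha$ for $n\ge1$, and using $(n+1)^{-\alpha/2}\le n^{-\alpha/2}$). Multiplying by $n$ gives $\sqrt n\,\|R_n\|_{L^2}=\mathcal{O}(n^{(\alpha-1)/2})\to0$ since $\alpha<1$.

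The main point of care is the sum term: naively applying Cauchy--Schwarz to products of random matrices and random iterates would lose the clean rate. The argument instead hinges on the coupled bound being deterministic after $t_0$, with the finitely many early increments absorbed via the pathwise $C_P$ bound.
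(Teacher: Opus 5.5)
Your proof is correct and follows the paper's argument essentially step for step. Both apply Minkowski to the three-term representation of $nR_n$, bound the boundary terms by $\mathcal{O}(1)$ and $\mathcal{O}(n^{\alpha/2})$, and split the sum at $t_0$, using the crude bound $2\eta_0^{-1}C_P t_0^{\alpha}$ on the head and the pathwise coupled bound with $\sum_{t\le n} t^{\alpha/2-1}\le (2/\alpha)n^{\alpha/2}$ on the tail. The only cosmetic difference is that you square via $(a+b+c)^2\le 3(a^2+b^2+c^2)$, whereas the paper squares the combined bound $(K_0+K_1+K_2+K_3)\,n^{\alpha/2}$ directly.
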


\begin{proof}
Deferred to the supplementary material.
\end{proof}

The pathwise hypotheses of Proposition~\ref{prop:tight} are verified
for each of the three SA constructions under the bounded-input
conditions ($\|g_t\|\le G$ for SA-AdaGrad and SA-RMSProp;
$h_- I \preceq \hat{H}_t \preceq h_+ I$ for SA-ONS): the uniform
pathwise bound $\|M_t\|_{\mathrm{op}}\le C_P$ follows from the pathwise
ellipticity of $P_t$ supplied by
Corollaries~\ref{cor:adaptive_iterate} and~\ref{cor:ons_iterate}, and
the pathwise coupled-increment rate
$\|\eta_t^{-1}M_t-\eta_{t-1}^{-1}M_{t-1}\|_{\mathrm{op}}
=\mathcal{O}(t^{\alpha-1})$ a.s.\ follows by combining that uniform
bound with the pathwise stabilization rate $\beta=1$ of
Corollary~\ref{cor:sa_stab} through the triangle inequality of
Corollary~\ref{cor:threshold}.  In both tiers the dynamic remainder is
of lower order than the $n^{-1/2}$ martingale term, so the dynamic
preconditioning contributes only an asymptotically negligible
correction under the $\sqrt{n}$ normalization.  The next result makes
the preconditioner dependence of~$C_R$ explicit.

\begin{proposition}[Operator-Factor Comparison across Preconditioner Constructions]
\label{prop:comparison}
Assume Assumptions~\ref{ass:mg}--\ref{ass:iterate}, the pathwise
containment hypothesis $x_t \in \mathcal{K}$ for all~$t$, and continuity
of the state-dependent noise covariance
$\Sigma(x) := \mathbb{E}[\xi(x,\zeta)\xi(x,\zeta)^\top]$ at~$x^*$.  For
each SA construction, assume the full hypotheses~(a)--(b) of the
respective primitive corollary (which subsume both the $L$-smoothness
of~$F$ on~$\mathcal{K}$ and the iterate containment
$x_t\in\mathcal{K}$), and additionally for SA-ONS the conditional-mean
constancy $\mathbb{E}[\hat{H}_t \mid \mathcal{F}_{t-1}] = H$ a.s.  Then
Theorem~\ref{thm:sa_stab}(iv) yields the pathwise stabilization rate
$\beta=1$ in each case.  Moreover, $M_t \to M_\infty$ a.s.\@ for each of
the three SA preconditioner constructions: for SA-ONS this follows from
the final clause of Theorem~\ref{thm:sa_stab} (with $\bar W = H$), while
for SA-AdaGrad and SA-RMSProp it is established by a direct pathwise
argument given in the supplement.  The uniform operator bound on
$\|M_t\|_{\mathrm{op}}$ follows from the pathwise uniform ellipticity
of $P_t$ supplied by the respective primitive corollaries
(Corollaries~\ref{cor:adaptive_iterate} and~\ref{cor:ons_iterate})
combined with $H\succ 0$, via $M_t = (P_tH)^{-1}$.
Proposition~\ref{prop:tight}, therefore, applies, yielding the
finite-sample constant $C_R$ with some uniform pathwise bound $C_P$.
Since $M_t \to M_\infty$ almost surely in each construction, we
compare the constructions at the level of the \emph{asymptotic
operator factor}
$\bar C_P := \|M_\infty\|_{\mathrm{op}}$---the $t\to\infty$ limit of
$\|M_t\|_{\mathrm{op}}$, whose pathwise uniform bound supplies the
constant $C_P$ in Proposition~\ref{prop:tight}---which takes the
following explicit form in each case:
\begin{itemize}
  \item \textbf{Identity (PR-SGD)}: $P_t \equiv I$, so $M_t = H^{-1}$ and
    \[
      \bar C_P^{\mathrm{Id}}
      :=
      \|H^{-1}\|_{\mathrm{op}}
      =
      \lambda_{\min}(H)^{-1}
      =
      \frac{\kappa(H)}{\lambda_{\max}(H)}.
    \]
  \item \textbf{SA-ONS}: $M_t = H^{-1}B_{t-1} \to I$, so
    \[
      \bar C_P^{\mathrm{ONS}} := 1.
    \]
  \item \textbf{SA-AdaGrad}: $M_t = H^{-1}C_{t-1}^{1/2} \to H^{-1}(S+\epsilon I)^{1/2}$, so
    \[
      \bar C_P^{\mathrm{AdaGrad}}
      :=
      \|H^{-1}(S+\epsilon I)^{1/2}\|_{\mathrm{op}},
    \]
    whose magnitude depends on the joint spectrum and alignment of $H$ and $S$.
  \item \textbf{SA-RMSProp}: writing $s_{\mathrm{diag}} = \mathrm{diag}(S)$,
    $M_t = H^{-1}\mathrm{Diag}(v_{t-1})^{1/2}
    \to H^{-1}\mathrm{Diag}(s_{\mathrm{diag}}+\epsilon \mathbf{1})^{1/2}$, so
    \[
      \bar C_P^{\mathrm{RMSProp}}
      =
      \left\|H^{-1}\mathrm{Diag}(s_{\mathrm{diag}}+\epsilon \mathbf{1})^{1/2}\right\|_{\mathrm{op}}.
    \]
\end{itemize}
In particular,
\[
  \frac{\bar C_P^{\mathrm{ONS}}}{\bar C_P^{\mathrm{Id}}}
  =
  \lambda_{\min}(H)
  =
  \frac{\lambda_{\max}(H)}{\kappa(H)}.
\]
Under the natural spectral normalization $\lambda_{\max}(H)=1$, one has
$\bar C_P^{\mathrm{Id}} = \kappa(H)$ and $\bar C_P^{\mathrm{ONS}} = 1$, so the
SA-ONS operator factor is smaller than the identity-baseline factor by the
multiplicative ratio $1/\kappa(H)$.
\end{proposition}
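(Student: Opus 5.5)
The proposition bundles several claims, and I will verify them in sequence, reusing earlier results wherever possible.

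\textbf{Step 1: rates and uniform bound.} For each construction, the primitive corollary (Corollary~\ref{cor:adaptive_iterate} or Corollary~\ref{cor:ons_iterate}) gives a.s.\ ellipticity $p_- I\preceq P_t\preceq p_+ I$. Then
\[
\|M_t\|_{\mathrm{op}}=\|H^{-1}P_t^{-1}\|_{\mathrm{op}}\le \frac{1}{\mu\,p_-}=:C_P .
\]
The same hypotheses bound the SA inputs a.s. For AdaGrad/RMSProp, $\|W_t\|_{\mathrm{op}}\le G^2+\epsilon$. For ONS, $\|\hat H_t\|_{\mathrm{op}}\le h_+$, and $\hat H_t\succeq h_-I$ supplies the ellipticity floor; I use $h_-$ in the role of $\epsilon$ in Theorem~\ref{thm:sa_stab}, since $B_0=\epsilon I\succeq h_-I$. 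Theorem~\ref{thm:sa_stab}(iv) then gives $\beta=1$. Corollary~\ref{cor:threshold} turns this into the coupled increment bound $C_{\mathrm{coup}}t^{\alpha-1}$ a.s. Hence Proposition~\ref{prop:tight} applies, producing $C_R$.

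\textbf{Step 2: almost-sure limits.} For PR-SGD the limit is trivial: $M_t\equiv H^{-1}$. For SA-ONS, take $\bar W=H$ in the final clause of Theorem~\ref{thm:sa_stab}. This gives $B_t\to H$ a.s., so $M_t=H^{-1}B_{t-1}\to I$.

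For SA-AdaGrad the conditional mean $\mathbb E[g_tg_t^\top\mid\mathcal F_{t-1}]=\nabla F(x_t)\nabla F(x_t)^\top+\Sigma(x_t)$ is not constant, so the final clause of Theorem~\ref{thm:sa_stab} does not apply directly. This is the main obstacle. I would handle it in four moves.
\begin{itemize}
\item Write $W_t-(S+\epsilon I)=D_t+(\nabla F(x_t)\nabla F(x_t)^\top+\Sigma(x_t)-S)$, where $D_t$ is a bounded martingale difference.
\item The recursion unrolls as $C_t-(S+\epsilon I)=\Pi_{1:t}\,(C_0-S-\epsilon I)+\sum_s \rho_s\Pi_{s+1:t}(\cdots)$, with $\Pi_{s:t}=\prod_{r=s}^{t}(1-\rho_r)\asymp (s/t)^c$.
\item The martingale part $\sum_s\rho_s\Pi_{s+1:t}D_s$ tends to $0$ a.s. I would show this via a Robbins--Siegmund / Kronecker argument, using $\sum_s\rho_s^2<\infty$ and boundedness of $D_s$.
\item The drift part is a weighted Cesàro average of $\delta_s:=\nabla F(x_s)\nabla F(x_s)^\top+\Sigma(x_s)-S$, with weights $\rho_s\Pi_{s+1:t}$ that sum to at most $1$ and vanish on any fixed initial segment. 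It therefore tends to $0$ whenever $\delta_s\to0$.
\end{itemize}

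It remains to show $\delta_s\to0$ a.s., which follows from $x_s\to x^*$ a.s. together with continuity of $\Sigma$ at $x^*$ and of $\nabla F$ ($L$-smoothness). To get $x_s\to x^*$ a.s., I would combine the supermartingale inequality underlying the proof of Proposition~\ref{prop:iterate_bound} with Robbins--Siegmund, using $\sum\eta_t=\infty$ and $\sum\eta_t^2<\infty$. If only the $L^2$ rate were available, it would give only a.s.\ convergence along a subsequence, which is the weak point here. Once $C_t\to S+\epsilon I$, continuity of $A\mapsto A^{1/2}$ on $\{A\succeq\epsilon I\}$ gives $M_t=H^{-1}C_{t-1}^{1/2}\to H^{-1}(S+\epsilon I)^{1/2}$. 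RMSProp is identical coordinatewise, with limit $\mathrm{Diag}(s_{\mathrm{diag}}+\epsilon\mathbf 1)^{1/2}$.

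\textbf{Step 3: explicit factors and ratio.} The explicit forms of $\bar C_P$ follow from continuity of the operator norm. In particular, $\|H^{-1}\|_{\mathrm{op}}=\lambda_{\min}(H)^{-1}=\kappa(H)/\lambda_{\max}(H)$ and $\|I\|_{\mathrm{op}}=1$. The ratio is therefore $\lambda_{\min}(H)$, and setting $\lambda_{\max}(H)=1$ yields $\bar C_P^{\mathrm{Id}}=\kappa(H)$ and the factor $1/\kappa(H)$.
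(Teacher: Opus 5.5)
Your proposal is correct and follows the paper's proof in all essentials. Ellipticity of $P_t$ gives the uniform bound on $\|M_t\|_{\mathrm{op}}$. For SA-ONS, Theorem~\ref{thm:sa_stab}(iv) (with $h_-$ playing the role of $\epsilon$) and its final clause with $\bar W=H$ do the work. A Robbins--Siegmund argument on the objective gap gives $\Delta_t\to 0$ a.s. The SA-AdaGrad/SA-RMSProp accumulators are split into a bounded martingale part and a vanishing bias part, each averaged with Toeplitz-type weights.

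The only deviation is in the martingale component. You would run Robbins--Siegmund on $m_t=(1-\rho_t)m_{t-1}+\rho_t D_t$, effectively reusing the final-clause argument of Theorem~\ref{thm:sa_stab}. The paper instead uses Kronecker's lemma for $c=1$, and for general $c$ it uses Abel summation against $N_t=\sum_{s\le t}\zeta_s/s$ together with a pathwise Toeplitz lemma. Both routes are valid.
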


\begin{proof}
Deferred to the supplementary material.
\end{proof}

\noindent
The comparison isolates the dominant geometric effect: SA-ONS drives
$M_t \to I$, eliminating the anisotropy of~$H^{-1}$ and removing the
condition-number penalty carried by the identity baseline at the
level of the asymptotic operator factor $\bar C_P$.  Among the
full-matrix rules, SA-ONS achieves the Newton-metric limit
$P_tH \to I$ unconditionally, whereas SA-AdaGrad achieves it only in
the non-generic case $H^2 = S + \epsilon I$; SA-RMSProp replaces full
covariance adaptation by a diagonal surrogate.  No universal ordering
between $\bar C_P^{\mathrm{RMSProp}}$ and the full-matrix factors
follows without additional structure on $H$ and $S$, and a full
comparison of the complete constants $C_R$ would also require matching
the coupled-difference terms.

\subsection{Asymptotic Normality and Quantitative Gaussian Approximation}
\label{sec:clt}

We now establish the inferential consequences.  The decomposition of
Lemma~\ref{lem:decomp} confines all preconditioner dependence
to~$R_n$, and Proposition~\ref{prop:l2_remainder} guarantees
$\sqrt{n}\,R_n \to 0$ in $L^1$ (hence, in probability) under only the
$L^2(\mathrm{op})$ stabilization bounds of
Corollaries~\ref{cor:sa_stab}--\ref{cor:threshold}, with no
bounded-input hypothesis required.  The CLT, therefore, follows from a
standard martingale central limit theorem.  Corollary~\ref{cor:gauss_approx} then gives
a quantitative Gaussian approximation bound in Wasserstein distance;
its framing relative to the unpreconditioned nonlinear baseline of
Kong and Srikant~\citep{kong2026finite} is discussed after the theorem
statement.

\begin{theorem}[Asymptotic Normality under Dynamically Preconditioned ASGD]
\label{thm:clt}
Let Assumptions~\ref{ass:mg}--\ref{ass:iterate} hold.  Assume additionally that
the state-dependent noise covariance map
$\Sigma(x) := \mathbb{E}[\xi(x,\zeta)\xi(x,\zeta)^\top]$
is continuous at $x^*$ and that
$\sup_{t \ge 1}\mathbb{E}\|\xi_t\|^{2+\delta} < \infty$ for some $\delta > 0$.
Assume in addition that $\sqrt{n}\,R_n(\{P_t\}) \to 0$ in probability;
this is supplied by Proposition~\ref{prop:l2_remainder} whenever the
$L^2(\mathrm{op})$ stabilization bounds of
Corollaries~\ref{cor:sa_stab}--\ref{cor:threshold} hold, with no
bounded-input hypothesis required.
Then
\begin{equation}
    \sqrt{n}\,(\overline{x}_n - x^*) \;\xrightarrow{d}\;
    \mathcal{N}\!\left(0,\; H^{-1}S H^{-1}\right).
\end{equation}
\end{theorem}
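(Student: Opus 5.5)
The plan is to start from the exact decomposition of Lemma~\ref{lem:decomp} and write
\[
  \sqrt{n}\,(\overline{x}_n - x^*) = \sqrt{n}\,\Xi_n + \sqrt{n}\,T_n + \sqrt{n}\,R_n(\{P_t\}).
\]
The Taylor term satisfies $\sqrt{n}\,T_n \to 0$ in $L^1$ by Proposition~\ref{prop:taylor}, since $\mathbb{E}\|T_n\| = \mathcal{O}(n^{-\alpha})$ and $\alpha > 1/2$. The dynamic remainder satisfies $\sqrt{n}\,R_n \to 0$ in probability by hypothesis, supplied in practice by Proposition~\ref{prop:l2_remainder}. By Slutsky's lemma it therefore suffices to show
\[
  \sqrt{n}\,\Xi_n = -H^{-1}\, n^{-1/2}\sum_{t=1}^n \xi_t \xrightarrow{d} \mathcal{N}(0, H^{-1}SH^{-1}).
\]
Because $H^{-1}$ is a fixed matrix, this reduces further to $n^{-1/2}\sum_{t\le n}\xi_t \xrightarrow{d} \mathcal{N}(0,S)$.

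For this I will apply the martingale CLT for triangular arrays (e.g.\ Hall--Heyde) to the array $X_{n,t} := n^{-1/2}\xi_t$, after a Cramér--Wold reduction to $a^\top\xi_t$ for fixed $a\in\mathbb{R}^d$. By Assumption~\ref{ass:mg} the $\xi_t$ are $\mathcal{F}_t$-martingale differences. Two conditions must be verified.

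\textbf{Conditional Lindeberg.} This follows from the uniform $(2+\delta)$-moment:
\[
  \sum_t \mathbb{E}\bigl[\|X_{n,t}\|^2\mathbf 1\{\|X_{n,t}\|>\varepsilon\}\bigr] \le \varepsilon^{-\delta} n^{-1-\delta/2}\sum_{t\le n}\mathbb{E}\|\xi_t\|^{2+\delta} = \mathcal{O}(n^{-\delta/2}),
\]
which gives convergence to $0$ in $L^1$, hence in probability.

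\textbf{Conditional variance.} Since $x_t$ is $\mathcal{F}_{t-1}$-measurable and $\zeta_t$ is independent of $\mathcal{F}_{t-1}$, we have $\mathbb{E}[\xi_t\xi_t^\top\mid\mathcal{F}_{t-1}] = \Sigma(x_t)$. So I need to show
\[
  \frac1n\sum_{t\le n}\Sigma(x_t) \to \Sigma(x^*) = S \quad\text{in probability}.
\]
Assumption~\ref{ass:iterate} gives $x_t \to x^*$ in $L^2$, hence in probability. Continuity of $\Sigma$ at $x^*$ then gives $\Sigma(x_t)\to S$ in probability. Assumption~\ref{ass:var} bounds $0 \preceq \Sigma(x_t) \preceq \overline{S}$ uniformly, so bounded convergence upgrades this to $\mathbb{E}\|\Sigma(x_t)-S\|_{\mathrm{op}}\to 0$. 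A Cesàro argument then yields
\[
  \mathbb{E}\Bigl\|\frac1n\sum_{t\le n}\Sigma(x_t)-S\Bigr\|_{\mathrm{op}} \le \frac1n\sum_{t\le n}\mathbb{E}\|\Sigma(x_t)-S\|_{\mathrm{op}} \to 0,
\]
which is convergence in $L^1$ and hence in probability.

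The main obstacle is the conditional-variance step. It requires care because $\Sigma$ is only assumed continuous at the single point $x^*$, not uniformly continuous, and because the iterate convergence is in mean square rather than almost sure. The route above (convergence in probability, then the uniform bound $\overline{S}$ from Assumption~\ref{ass:var}, then Cesàro averaging in $L^1$) avoids needing almost-sure convergence of $x_t$. Everything else is standard once Lemma~\ref{lem:decomp} has removed $P_t$ from the leading term. In particular, the limiting covariance is $H^{-1}SH^{-1}$ regardless of whether $P_t$ converges, since no preconditioner appears in $\Xi_n$.
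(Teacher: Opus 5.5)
Your proposal is correct and follows the paper's proof essentially step for step. You use the decomposition of Lemma~\ref{lem:decomp}, handle the remainders with Proposition~\ref{prop:taylor} and the hypothesis on $R_n$, and apply a Cram\'er--Wold/Hall--Heyde martingale CLT with Lindeberg from the $(2+\delta)$-moment bound and $\Sigma(x_t)\to S$ upgraded to $L^1$ and Ces\`aro-averaged. The only minor difference is the $L^1$ upgrade: you get it by bounded convergence from $0\preceq\Sigma(x_t)\preceq\overline{S}$ (Assumption~\ref{ass:var}), while the paper derives uniform integrability from the $(2+\delta)$-moment bound via conditional Jensen; both are valid, and yours is slightly more direct.
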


\begin{proof}
Deferred to the supplementary material.
\end{proof}

The limiting distribution is identical to that of unpreconditioned
Polyak--Ruppert averaging
\citep{polyak1992acceleration,ruppert1988efficient}: the dynamic
preconditioner $P_t$ affects only the higher-order remainder $R_n$, not the
asymptotic covariance $H^{-1}SH^{-1}$.  Consequently, first-order
Wald-type inference based on a consistent estimator of $H^{-1}SH^{-1}$
applies without modification relative to the unpreconditioned case.

\begin{corollary}[Asymptotic Efficiency under Information Equality]
\label{cor:efficiency}
When $S = H$---the information-equality case associated with correctly
specified likelihood models, including well-specified canonical GLMs---the
asymptotic covariance $H^{-1}SH^{-1} = H^{-1}$ equals the inverse Fisher
information matrix, attaining the asymptotic Fisher-information lower bound
(under the usual likelihood regularity conditions).  Hence, all three
SA-preconditioned Polyak--Ruppert averages are asymptotically efficient.
\end{corollary}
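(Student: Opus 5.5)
The corollary is a direct specialization of Theorem~\ref{thm:clt}, so the plan is mostly algebra plus bookkeeping of hypotheses. First, I substitute $S = H$ into the limiting covariance of Theorem~\ref{thm:clt}. Because $H \succ 0$ (Assumption~\ref{ass:convex}), $H^{-1}$ exists, and
\[
H^{-1}SH^{-1} = H^{-1}HH^{-1} = H^{-1}.
\]
This gives $\sqrt{n}(\overline{x}_n - x^*) \xrightarrow{d} \mathcal{N}(0,H^{-1})$.

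Second, I identify $H^{-1}$ with the inverse Fisher information. In a correctly specified likelihood model, take $f(x,\zeta) = -\log p_x(\zeta)$. Then $S$ is the variance of the score at the true parameter $x^*$, and $H$ is the expected negative Hessian of the log-likelihood. The information-matrix equality, which holds under the usual regularity conditions allowing differentiation under the integral twice, gives $S = H = I(x^*)$. For canonical GLMs I would verify this directly. There the Hessian of the negative log-likelihood is $b''(z^\top x)zz^\top$, which does not involve the response. The score outer product has conditional expectation $\mathrm{Var}(y\mid z)\,zz^\top = b''(z^\top x^*)zz^\top$ at the truth, so taking expectations yields $S = H$.

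Third, I invoke the local asymptotic minimax (Hájek–Le Cam) bound, or the convolution theorem for regular estimators, as in \citep{vaart1998asymptotic}. Under differentiability in quadratic mean, any regular estimator sequence has limiting covariance $\succeq I(x^*)^{-1}$. Attaining this bound is the definition of asymptotic efficiency.

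Fourth, I check that Theorem~\ref{thm:clt} applies to each of the three SA constructions. The input $\sqrt{n}R_n \to 0$ in probability comes from Proposition~\ref{prop:l2_remainder}, whose hypotheses are supplied by Corollaries~\ref{cor:sa_stab}–\ref{cor:threshold}. For this step I would also need the moment and covariance-continuity conditions of Theorem~\ref{thm:clt}.

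The main obstacle is a subtle point in the third step. The convolution theorem's efficiency statement concerns \emph{regular} estimators, whereas Theorem~\ref{thm:clt} establishes only pointwise asymptotic normality at the fixed $x^*$. I would therefore interpret "attaining the bound" in the sense stated in the corollary: the limiting covariance equals $I(x^*)^{-1}$. If full regularity along local alternatives $x^* + h/\sqrt{n}$ were required, I would appeal to the asymptotic linear representation
\[
\sqrt{n}\,\overline{\Delta}_n = -n^{-1/2}H^{-1}\sum_{t=1}^n \xi_t + o_P(1)
\]
from Lemma~\ref{lem:decomp}. Under the likelihood model the leading term is asymptotically equivalent to the efficient influence function $I^{-1}\dot\ell$ evaluated at $x^*$, and asymptotic linearity in the efficient influence function implies regularity by Le Cam's third lemma.
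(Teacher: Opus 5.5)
Your proposal is correct and follows the paper, whose entire proof is the specialization of Theorem~\ref{thm:clt} to $S=H$ (so that $H^{-1}SH^{-1}=H^{-1}$). Your identification of $H^{-1}$ with the inverse Fisher information, and your check that Theorem~\ref{thm:clt} covers all three constructions, are bookkeeping the paper leaves implicit, with one caveat on your optional regularity branch: replacing $\xi_t=\xi_t(x_t)$ by the score term $\nabla f(x^*,\zeta_t)$ in the leading sum needs mean-square continuity $\mathbb{E}\|\nabla f(x,\zeta)-\nabla f(x^*,\zeta)\|^2\to 0$ as $x\to x^*$, which is a standard likelihood regularity condition but does not follow from continuity of $\Sigma$ alone.
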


\begin{proof}
Immediate from Theorem~\ref{thm:clt} with $S = H$.
\end{proof}

\noindent
Without information equality, the asymptotic covariance retains the
general Huber--White sandwich form $H^{-1}SH^{-1}$.

\subsubsection{Rate Preservation in Wasserstein Distance}

Writing
$\widetilde{\Xi}_n := -n^{-1/2}H^{-1}\sum_{t=1}^n \xi_t$,
the Wasserstein Gaussian approximation problem for
$\sqrt{n}(\overline{x}_n-x^*)$ reduces---via the decomposition of
Lemma~\ref{lem:decomp}---to a Gaussian approximation for
$\widetilde{\Xi}_n$ plus explicit first-moment control of the Taylor
and dynamic remainders.  The Wasserstein metric is convenient for this
reduction because additive remainders enter through first moments, a
structural feature exploited in Wasserstein CLT rate bounds
\citep{rio2009upper,bonis2020stein}.

Throughout this subsection we write $V = H^{-1}SH^{-1}$ for the limiting
covariance and $\Phi_V$ for the $\mathcal{N}(0,V)$ distribution.  For
probability measures $\mu,\nu$ on $\mathbb{R}^d$ with finite first moments, we
use the \emph{$1$-Wasserstein distance}
\begin{equation}\label{eq:wasserstein_dist}
  d_W(\mu,\nu)
  \;=\;
  \sup_{\|h\|_{\mathrm{Lip}} \le 1}
  \left|
    \int h \, d\mu - \int h \, d\nu
  \right|,
\end{equation}
where $\|h\|_{\mathrm{Lip}}$ denotes the Euclidean Lipschitz seminorm of~$h$.

\begin{corollary}[Wasserstein Rate Preservation under Dynamic Preconditioning]
\label{cor:gauss_approx}
Let Assumptions~\ref{ass:mg}--\ref{ass:iterate} hold with step size
$\eta_t = \eta_0 t^{-\alpha}$ for $\alpha \in (1/2,1)$, and suppose the
dynamic remainder satisfies
\[
    \mathbb{E}\|R_n(\{P_t\})\|^2 \le C_R n^{\alpha-2}.
\]
Let
$\widetilde{\Xi}_n := -\frac{1}{\sqrt{n}}H^{-1}\sum_{t=1}^n \xi_t$, and let
$\Delta_{\mathrm{mart},W}(n)$ denote the Wasserstein approximation error for
the rescaled martingale term:
\[
    \Delta_{\mathrm{mart},W}(n)
  \;=\;
  d_W\!\left(
    \mathcal{L}(\widetilde{\Xi}_n),
    \Phi_V
  \right).
\]
Then there exist constants $C_1, C_2 > 0$, depending only on
$(H, C_R, L_R, C_\Delta, \alpha, \eta_0)$, such that
\begin{equation}\label{eq:gauss_approx}
  d_W\!\left(
    \mathcal{L}\bigl(\sqrt{n}\,\overline{\Delta}_n\bigr),
    \Phi_V
  \right)
  \;\leq\;
  \Delta_{\mathrm{mart},W}(n)
  \;+\; C_1\, n^{(\alpha-1)/2}
  \;+\; C_2\, n^{1/2-\alpha}.
\end{equation}
In particular, for each of the three SA preconditioners (SA-AdaGrad,
SA-RMSProp, SA-ONS), the pathwise uniform ellipticity of $P_t$
supplied by Corollaries~\ref{cor:adaptive_iterate} (for SA-AdaGrad and
SA-RMSProp) and~\ref{cor:ons_iterate} (for SA-ONS), together with the
stabilization results of
Corollaries~\ref{cor:sa_stab} and~\ref{cor:threshold} under the
bounded-input conditions stated there ($\|g_t\| \le G$ for SA-AdaGrad
and SA-RMSProp; the two-sided spectral bound
$h_- I \preceq \hat{H}_t \preceq h_+ I$ together with the
conditional-mean constancy
$\mathbb{E}[\hat{H}_t \mid \mathcal{F}_{t-1}] = H$ for SA-ONS),
establish the pathwise hypotheses of Proposition~\ref{prop:tight},
which in turn imply the dynamic-remainder bound assumed above.
\end{corollary}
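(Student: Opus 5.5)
We plan to prove Corollary~\ref{cor:gauss_approx} by coupling through the exact decomposition of Lemma~\ref{lem:decomp}. Multiplying \eqref{eq:decomposition} by $\sqrt{n}$ gives
\[
  \sqrt{n}\,\overline{\Delta}_n = \widetilde{\Xi}_n + \sqrt{n}\,T_n + \sqrt{n}\,R_n(\{P_t\}),
\]
and all three terms live on the same probability space. For any $h$ with $\|h\|_{\mathrm{Lip}}\le1$ we have the pointwise bound $|h(\sqrt{n}\,\overline{\Delta}_n)-h(\widetilde{\Xi}_n)|\le \sqrt{n}\|T_n\|+\sqrt{n}\|R_n\|$. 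Taking expectations and then the supremum over $h$ in \eqref{eq:wasserstein_dist} yields
\[
  d_W\bigl(\mathcal{L}(\sqrt{n}\,\overline{\Delta}_n),\Phi_V\bigr)
  \le d_W\bigl(\mathcal{L}(\widetilde{\Xi}_n),\Phi_V\bigr)
  + \sqrt{n}\,\mathbb{E}\|T_n\| + \sqrt{n}\,\mathbb{E}\|R_n\|.
\]
This is just the triangle inequality for $d_W$ combined with the fact that the Wasserstein distance between two laws is at most the $L^1$ distance of any coupling of them. Finite first moments are guaranteed by Proposition~\ref{prop:variance}, Assumption~\ref{ass:iterate}, and the assumed bound on $R_n$.

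The remainder terms are then handled separately. For the dynamic remainder, Jensen/Cauchy--Schwarz together with the hypothesis give $\mathbb{E}\|R_n\|\le(\mathbb{E}\|R_n\|^2)^{1/2}\le C_R^{1/2}n^{\alpha/2-1}$, so $\sqrt{n}\,\mathbb{E}\|R_n\|\le C_R^{1/2}n^{(\alpha-1)/2}$, which fixes $C_1=C_R^{1/2}$. For the Taylor term, we use the explicit form of Proposition~\ref{prop:taylor}: $\|u_t\|\le L_R\|\Delta_t\|^2$ by \eqref{eq:expansion}, so
\[
  \mathbb{E}\|T_n\|\le \frac{\|H^{-1}\|_{\mathrm{op}}L_R}{n}\sum_{t=1}^n C_\Delta t^{-\alpha}
  \le \frac{\|H^{-1}\|_{\mathrm{op}}L_RC_\Delta}{1-\alpha}\,n^{-\alpha}.
\]
Hence $\sqrt{n}\,\mathbb{E}\|T_n\|\le C_2 n^{1/2-\alpha}$, with $C_2$ depending only on $(H,L_R,C_\Delta,\alpha)$. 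This establishes \eqref{eq:gauss_approx} without requiring any fourth-moment condition, because only first moments enter.

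For the ``in particular'' clause, we verify the hypothesis $\mathbb{E}\|R_n\|^2\le C_Rn^{\alpha-2}$ via Proposition~\ref{prop:tight}, whose conclusion is exactly this bound. The check has two parts. First, Corollaries~\ref{cor:adaptive_iterate} and~\ref{cor:ons_iterate} give pathwise ellipticity $p_-I\preceq P_t\preceq p_+I$. Combined with $H\succeq\mu I$, this yields $\|M_t\|_{\mathrm{op}}\le\|H^{-1}\|_{\mathrm{op}}\|P_t^{-1}\|_{\mathrm{op}}\le p_-^{-1}\mu^{-1}=:C_P$ almost surely. Second, Corollary~\ref{cor:sa_stab} under the bounded inputs, via Theorem~\ref{thm:sa_stab}(iv), gives $\|M_t-M_{t-1}\|_{\mathrm{op}}\le C_Mt^{-1}$ almost surely. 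Inserting this into the triangle inequality of Corollary~\ref{cor:threshold}, and using $|\eta_t^{-1}-\eta_{t-1}^{-1}|\lesssim t^{\alpha-1}$ and $\eta_{t-1}^{-1}\lesssim t^{\alpha}$, gives the coupled bound $C_{\mathrm{coup}}t^{\alpha-1}$ almost surely for $t\ge2$.

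The main obstacle is bookkeeping rather than analysis: the Wasserstein argument itself is routine, so the only delicate point is checking that the pathwise hypotheses of Proposition~\ref{prop:tight} really hold almost surely with deterministic constants. In particular, for SA-ONS the ellipticity must come from the two-sided clipping $h_-I\preceq\hat H_t\preceq h_+I$ with $\epsilon\in[h_-,h_+]$, not from the weaker $L^2$ conditions used elsewhere. Once those constants are in place, the constant dependence claimed in the statement follows directly.
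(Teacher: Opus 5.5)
Your proposal is correct and follows essentially the same route as the paper: couple through the decomposition of Lemma~\ref{lem:decomp}, bound the Lipschitz test-function difference by $\sqrt{n}\,\mathbb{E}\|T_n\|+\sqrt{n}\,\mathbb{E}\|R_n\|$, and control these by Proposition~\ref{prop:taylor} and by Cauchy--Schwarz applied to the assumed second-moment bound on $R_n$. Your explicit constants ($C_1=C_R^{1/2}$, $C_2=\|H^{-1}\|_{\mathrm{op}}L_RC_\Delta/(1-\alpha)$) and your check of the pathwise hypotheses of Proposition~\ref{prop:tight} (with $C_P=(p_-\mu)^{-1}$ and the coupled increment obtained via Theorem~\ref{thm:sa_stab}(iv)) match the paper's discussion after Proposition~\ref{prop:tight} and in the proof of Proposition~\ref{prop:comparison}.
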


\begin{proof}
Deferred to the supplementary material.
\end{proof}

\paragraph*{Framing: rate preservation, not a new rate}
Corollary~\ref{cor:gauss_approx} is a preservation statement, not a
new rate.  Kong and Srikant~\citep{kong2026finite} established
the $n^{-1/6}$ Wasserstein rate for \emph{unpreconditioned} nonlinear
SA (under their non-asymptotic CLT condition on the driving noise);
our contribution is to show that the stabilization threshold carries that baseline rate
to the preconditioned setting, so that imposing a data-driven preconditioner on top of averaged SGD incurs no additional
penalty in the quantitative Gaussian approximation.  We state the
result as a corollary to reflect this inheritance: the exponent is
inherited from the unpreconditioned analysis, while the threshold
identifies the class of preconditioners for which the inheritance is
legitimate.

The
inequality~\eqref{eq:gauss_approx} involves two competing remainder
contributions: the Taylor term at rate $n^{1/2-\alpha}$---a consequence
of nonlinearity that is present even without preconditioning---and the
dynamic remainder at rate $n^{(\alpha-1)/2}$, which captures the cost
of time-varying preconditioning.  Since $\alpha \in (1/2,1)$, both
exponents are negative, so both contributions vanish as $n \to \infty$.
The optimal choice of~$\alpha$ balances them:
$(\alpha-1)/2 = 1/2 - \alpha$, giving $\alpha^* = 2/3$.
At this value, both remainders contribute at rate $n^{-1/6}$, yielding
\[
  d_W\!\left(
    \mathcal{L}\bigl(\sqrt{n}\,\overline{\Delta}_n\bigr),
    \Phi_V
  \right)
  \le
  \Delta_{\mathrm{mart},W}(n) + \mathcal{O}(n^{-1/6}).
\]
The decisive observation is that the dynamic remainder $n^{(\alpha-1)/2}$
matches the Taylor remainder $n^{1/2-\alpha}$ at $\alpha^*$---it does
not dominate it.  Consequently, the overall exponent is governed by the
Taylor term, which is \emph{not} attributable to preconditioning, and the
preconditioner-specific contribution disappears into the same order as
the nonlinearity cost.  When the martingale term itself satisfies
$\Delta_{\mathrm{mart},W}(n) = \mathcal{O}(n^{-1/2})$---as holds for
i.i.d.\ summands with finite third moments
\citep{rio2009upper,bonis2020stein}---the right-hand side reduces to
$\mathcal{O}(n^{-1/6})$, reproducing the unpreconditioned nonlinear
rate of Kong and Srikant~\citep{kong2026finite}.

The Wasserstein metric is natural for this preservation statement
because
\[
  d_W(\mathcal{L}(X+R),\mathcal{L}(Y)) \le
  d_W(\mathcal{L}(X),\mathcal{L}(Y)) + \mathbb{E}\|R\|,
\]
so additive remainders enter through first moments---exactly the form
produced by the preconditioner-isolating decomposition in
Lemma~\ref{lem:decomp}.  The preservation argument, therefore, goes
through: our decomposition produces an additive remainder whose
first-moment rate is controlled by the stabilization threshold, and
the Wasserstein metric converts this first-moment control directly into
Gaussian-approximation error.

For context, in linear SA ($u_t = 0$) the Taylor remainder vanishes
identically, and correspondingly sharper rates are available:
$n^{-1/4}$ \citep{samsonov2024gaussian} and $n^{-1/3}$
\citep{butyrin2025improved} in convex distance (a different metric, so
the comparison is qualitative); Sheshukova et~al.\
\citep{sheshukova2025bootstrap} further develop a multiplier bootstrap
that avoids explicit covariance estimation.  For nonlinear averaged
SGD, Shao and Zhang~\citep{shao2022berry} establish multivariate
Berry--Esseen bounds via Stein's method.  The gap between the linear
and nonlinear rates is an artifact of the Taylor remainder, not of
preconditioning, and our bound inherits this gap without enlarging it: within the
Wasserstein-$1$ framework of Corollary~\ref{cor:gauss_approx}, dynamic
preconditioning above the threshold tracks the rate admitted by the
underlying unpreconditioned nonlinear problem.

\begin{remark}[Prospects for sharper rates]
\label{rem:rate_gap_open}
The proof bounds $\{u_t\}$ adversarially.  Because $u_t$ is approximately
quadratic in $\Delta_t$, the average $n^{-1}\sum u_t$ may admit partial
cancellation.  Exploiting this, or applying Stein's method directly to
$\sqrt{n}\,\overline{\Delta}_n$, could sharpen the rate beyond
$n^{-1/6}$.
\end{remark}

\section{Numerical Study}
\label{sec:simulation}

We test three predictions of the theory.  Identity ($P_t = I$,
classical PR-SGD) is included throughout as the established baseline;
the new claims concern the three SA-preconditioned variants
SA-AdaGrad, SA-RMSProp, and SA-ONS.  (i)~The CLT of
Theorem~\ref{thm:clt} holds for all three SA-preconditioned
Polyak--Ruppert averages, so that sandwich-based Wald intervals are
asymptotically calibrated.  (ii)~The dynamic remainder $\sqrt{n}\,R_n$
vanishes in $L^1$ via the $L^2$ pathway of
Proposition~\ref{prop:l2_remainder}, with the coupled-difference
constant $C_{\mathrm{coup}}$ in the bound of
Proposition~\ref{prop:tight} providing a heuristic for the transient
finite-sample ordering across SA preconditioners.
(iii)~The stabilization threshold $\beta > (\alpha+1)/2$ of
Theorem~\ref{thm:sharp_threshold} is sharp in the sense of
Proposition~\ref{thm:sharpness}, so that constant-EMA preconditioners
(for which no positive stabilization rate is expected) fail to drive
$\sqrt{n}\,R_n$ to zero over the observed sample-size range.
Our diagnostics are \emph{coverage}, the fraction of coordinates for
which the sandwich-based 95\% marginal confidence interval contains
the truth, and \emph{NMSE},
$n\|\overline{x}_n - x^*\|^2 / \mathrm{Tr}(H^{-1}SH^{-1})$, whose
limit in distribution has mean~$1$ under the CLT.  Throughout
Section~\ref{sec:simulation}, both diagnostics are computed against
the oracle sandwich $H^{-1}SH^{-1}$ formed from the population (or
full-dataset) Hessian and gradient covariance, isolating the
preconditioner effects from plug-in estimation error; the resulting
intervals, therefore, probe the asymptotic calibration claim of
Theorem~\ref{thm:clt} rather than feasible inference with an online
sandwich estimator.  Three experiments are reported:
a synthetic linear-regression benchmark (Figure~\ref{fig:overview}),
a real-data logistic regression (Figure~\ref{fig:logistic}), and a
threshold-violation experiment (Figure~\ref{fig:threshold}).
Additional diagnostics are in the supplementary material.

\paragraph*{Experimental design}
We consider streaming linear regression in dimensions $d \in \{5, 20, 50\}$ with
Gaussian covariates $a_t \sim \mathcal{N}(0, H)$, Toeplitz
$H_{jk} = 0.4^{|j-k|}$ ($\kappa(H) \le 5.5$ uniformly in $d$), and two noise
regimes: \textbf{general sandwich} ($S \neq H$, heteroskedastic with
$\|S-H\|_F/\|H\|_F \in [1.1,\,2.0]$) and
\textbf{information-equality} ($S = H$, homoskedastic Gaussian).
Four methods---Identity ($P_t = I$), SA-AdaGrad, SA-RMSProp,
SA-ONS---share $\eta_t = 0.2\,t^{-0.7}$ ($\alpha = 0.7$) and
$\rho_t = (t+1)^{-1}$ (the $c/t$ schedule of
Theorem~\ref{thm:sa_stab} with $c=1$, shifted by one step).  All three SA runs are Hessian-input variants
of the Section~\ref{sec:construction} constructions, with driving
input $a_ta_t^\top$ for the full-matrix accumulators (SA-AdaGrad and
SA-ONS) and $\mathrm{diag}(a_t\odot a_t)$ for the diagonal accumulator (SA-RMSProp).
The experiment adds a constant ridge $0.5\,I$ to each accumulator
before inversion; absorbing it into the driving term yields
$\widetilde{W}_t = a_ta_t^\top + 0.5\,I$ or
$\mathrm{diag}(a_t\odot a_t) + 0.5\,I$, each satisfying
$\widetilde{W}_t \succeq 0.5\,I$ a.s., with $\|\widetilde{W}_t\|_{\mathrm{op}}$
admitting moments of all orders since $a_t$ is Gaussian.  The three
methods differ by accumulator structure (full
matrix vs.\ diagonal) and spectral map ($A\mapsto A^{-1/2}$ for
SA-AdaGrad/SA-RMSProp, $A\mapsto A^{-1}$ for SA-ONS), probing the
dependence of the finite-sample remainder on the preconditioner
construction in the spirit of
Proposition~\ref{prop:comparison}.  The recursions fit inside the
generic framework of Theorem~\ref{thm:sa_stab} (cf.\
Remark~\ref{rem:generic_scope}), which delivers
$\|M_t\|_{L^2(\mathrm{op})} = \mathcal{O}(1)$ and
$\|M_t - M_{t-1}\|_{L^2(\mathrm{op})} = \mathcal{O}(t^{-1})$; combined
with the step-size schedule, these yield
$\|\eta_t^{-1}M_t - \eta_{t-1}^{-1}M_{t-1}\|_{L^2(\mathrm{op})}
= \mathcal{O}(t^{\alpha-1})$, verifying the hypotheses of
Proposition~\ref{prop:l2_remainder}, the $L^2$ pathway to the CLT of
Theorem~\ref{thm:clt} (rather than Proposition~\ref{prop:tight}, whose
almost-sure pathwise operator-norm hypotheses on $M_t$ are unavailable
under the Gaussian design).  We use 50 replications on a log-spaced
grid $n \in \{5{,}000, \ldots, 5{,}000{,}000\}$.

\paragraph*{Synthetic results}

\begin{figure}[tbp]
\centering
\includegraphics[width=\textwidth]{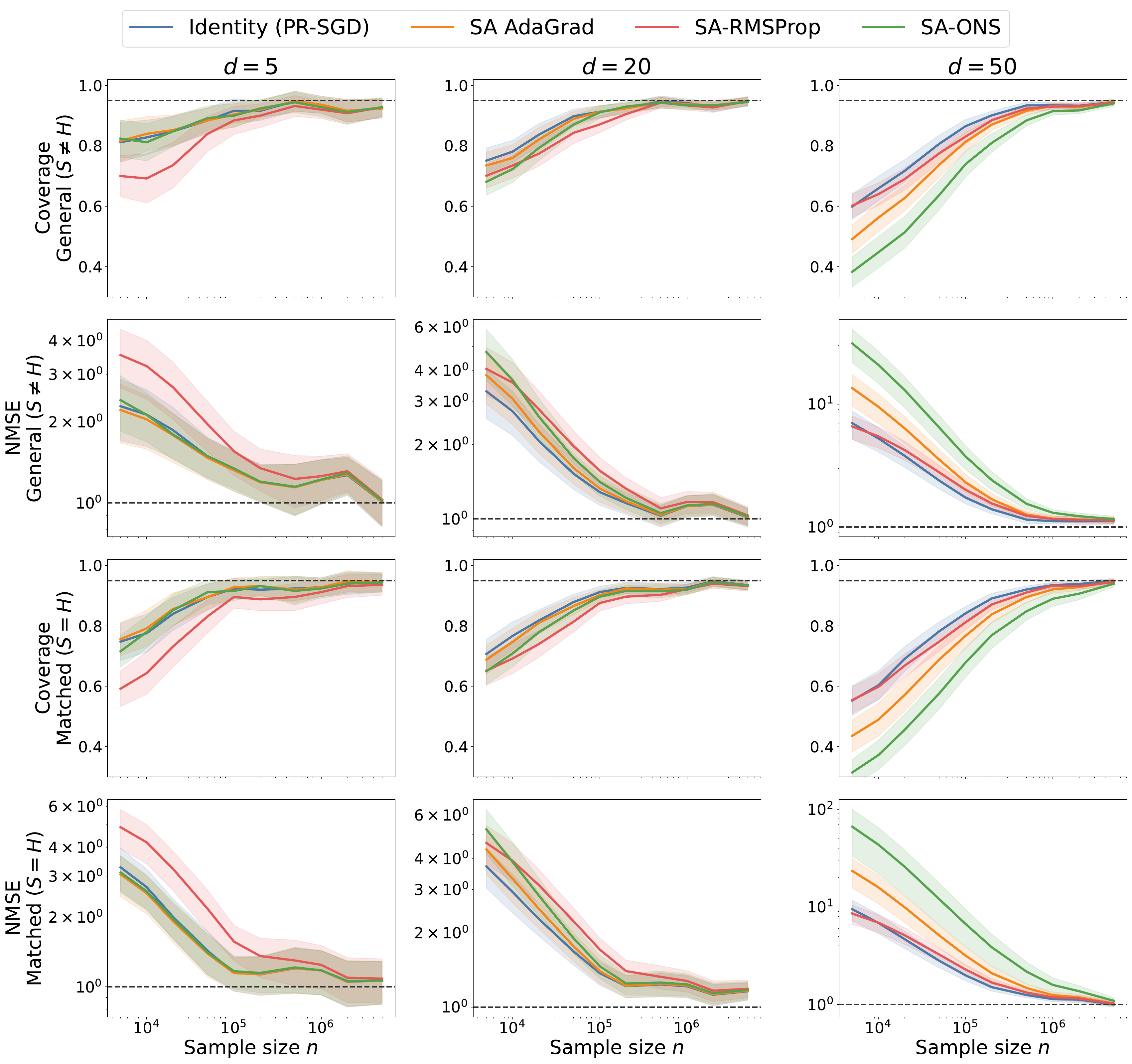}
\caption{Coordinatewise marginal coverage (rows 1, 3) and normalized MSE
(rows 2, 4) for the synthetic linear regression experiment.  Columns
correspond to $d = 5, 20, 50$; the upper two rows show the general
regime ($S \neq H$) and the lower two rows show the information-equality
regime ($S = H$).  Sample sizes are log-spaced from $n = 5 \times 10^3$
to $n = 5 \times 10^6$.  Each curve is the mean over 50 replications;
shaded bands are $\pm 1.96 \times \mathrm{SE}$ Monte Carlo uncertainty
bands.  Dashed lines mark the nominal coverage level~$0.95$ and the
mean~$1.0$ of the NMSE limit distribution under the CLT.
The four methods are Identity ($P_t = I$), SA-AdaGrad, SA-RMSProp, and SA-ONS.}
\label{fig:overview}
\end{figure}

Figure~\ref{fig:overview} displays the principal findings.  At the
terminal sample size $n = 5 \times 10^6$, all four methods attain
coverage $\in [0.92,\,0.95]$ and $\mathrm{NMSE} \in [1.00,\,1.19]$
across every $(d,\text{scenario})$ cell, with the four curves
visually indistinguishable at that horizon.  This is consistent with
the CLT universality of Theorem~\ref{thm:clt}: the asymptotic covariance
$H^{-1}SH^{-1}$ is preconditioner-independent, so the four estimators
share the same limit law and, once $\sqrt{n}\,R_n$ is negligible,
become statistically indistinguishable up to Monte Carlo error at this
horizon.  Convergence slows with dimension; at $d = 50$ and
$n = 2 \times 10^5$ the methods are still pre-asymptotic
(coverage $\in [0.81,\,0.90]$, NMSE $\in [1.39,\,2.40]$), then
collapse onto the benchmarks as~$n$ grows, consistent with the
vanishing of $\sqrt{n}\,R_n$ in $L^1$ (and, hence, in probability)
asserted by Proposition~\ref{prop:l2_remainder}.  In the $d = 50$ general-regime panel, the transient
NMSE ordering at $n = 2 \times 10^5$ is
Identity~$1.39 <$~SA-RMSProp~$1.55 <$~SA-AdaGrad~$1.68 <$~SA-ONS~$2.40$,
qualitatively in line with the role of the coupled-difference
constant $C_{\mathrm{coup}}$ in the bound of
Proposition~\ref{prop:tight}: full-matrix preconditioners incur a
larger estimation burden than diagonal ones, while Identity incurs
none.  Proposition~\ref{prop:tight} is a generic bound and does not
formally rank methods; the observed ordering is reported as empirical
evidence consistent with that mechanism.  The
$d \in \{5, 20\}$ panels serve as the universality check and do not
reproduce this ordering at small~$n$; we tentatively attribute this
to the coupled-difference constants being within Monte Carlo
resolution at these smaller dimensions.  Both findings persist across noise regimes.  Additional
CLT-convergence diagnostics---the scaled dynamic remainder
$\sqrt{n}\,\|R_n\|$ and its empirical log--log slopes---are reported
in the supplement.

\paragraph*{Real-data experiment}
We test the four methods on $\ell_2$-regularized logistic regression
using the UCI Breast Cancer Wisconsin
dataset~\citep{uci_breast_cancer} (10 mean features plus intercept,
$d = 11$; $\kappa(H) \approx 4.3$; ridge $\lambda = 0.1$; 100
replications; $n$ up to $200{,}000$).  The SA second-moment input is
the stochastic Hessian
$\hat{H}_t = p_t(1 - p_t)\,a_ta_t^\top + \lambda I$ (or its diagonal
for SA-RMSProp), where $p_t = \sigma(a_t^\top x_t)$; this is a
Hessian-input variant analogous to the synthetic setup, accommodated
by the generic SA recursion of Theorem~\ref{thm:sa_stab}.  Per the
Section~\ref{sec:simulation} convention, coverage is computed against
the oracle sandwich $H^{-1}SH^{-1}$ formed from the full dataset.
Figure~\ref{fig:logistic} shows that
all four methods reach $\mathrm{NMSE} \approx 1$ by
$n \approx 100{,}000$ and nominal $0.95$ coverage by
$n \approx 150{,}000$, with the four curves becoming visually
indistinguishable by $n \approx 50{,}000$ (small differences are
visible at the smallest sample sizes).  This is consistent with the
CLT universality of Theorem~\ref{thm:clt} on a nonlinear real-data
model: whichever SA preconditioner is chosen, Wald-type inference
calibrated by the sandwich covariance is asymptotically valid.  Any
preconditioner-specific finite-sample ordering would live in the
sub-leading remainder and is below Monte Carlo resolution at this
mild conditioning; it is in the $d = 50$ synthetic panel,
qualitatively in line with the role of $C_{\mathrm{coup}}$ in
Proposition~\ref{prop:tight}, that such an ordering becomes
transiently visible.

\begin{figure}[tbp]
\centering
\includegraphics[width=\textwidth]{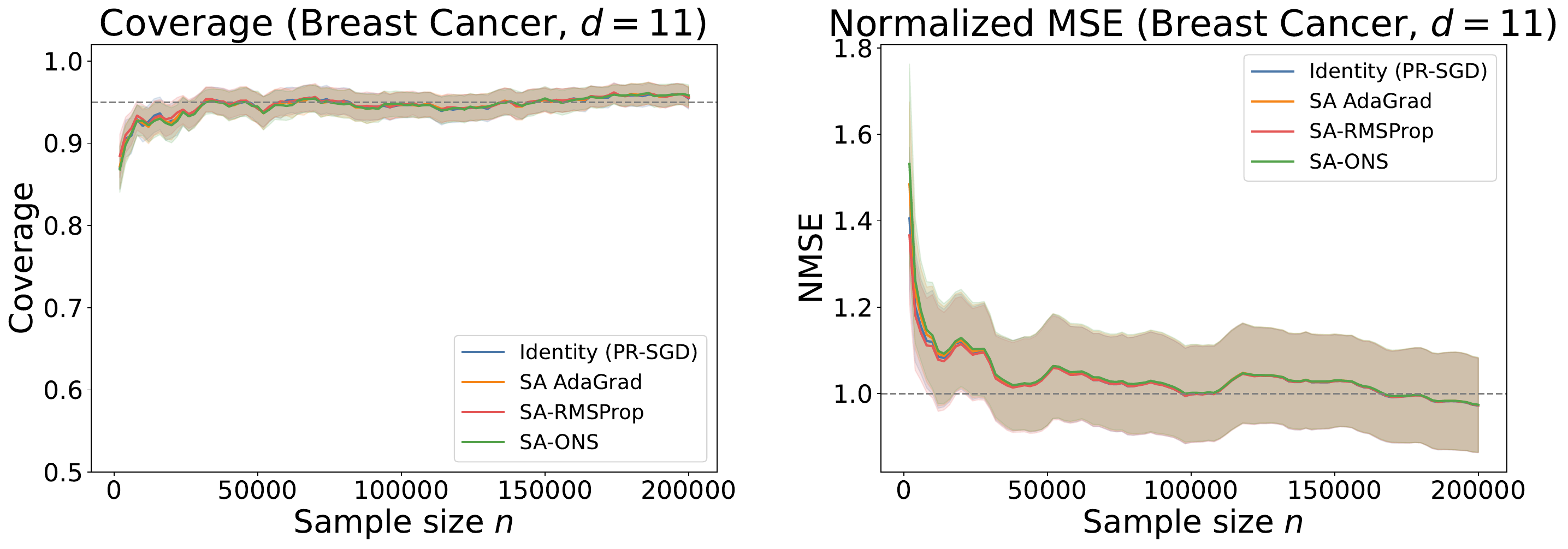}
\caption{Coordinatewise marginal coverage and normalized MSE for
  $\ell_2$-regularized logistic regression on the Breast Cancer Wisconsin
  dataset ($d = 11$).  Dashed lines mark the nominal coverage $0.95$ and
  the mean~$1.0$ of the NMSE limit distribution under the CLT.
  Each curve is the mean over 100 replications; shaded bands are
  $\pm 1.96 \times \mathrm{SE}$ Monte Carlo uncertainty bands.}
\label{fig:logistic}
\end{figure}

\paragraph*{Threshold violation: constant-EMA preconditioners}
For $\alpha = 0.7$ the stabilization threshold of
Theorem~\ref{thm:sharp_threshold} is $(\alpha+1)/2 = 0.85$.
Classical RMSProp \citep{tieleman2012rmsprop} uses a constant
exponential moving average; heuristically, in this nondegenerate
stochastic setting a constant gain $\rho$ keeps the fluctuations of
$M_t$ from decaying, so no positive stabilization rate is expected
in the sense of Definition~\ref{def:beta} (we do not formally
establish a lower bound on these fluctuations for the exact
constant-EMA recursion).  The hypotheses of
Theorem~\ref{thm:sharp_threshold} are, therefore, inapplicable, and
Proposition~\ref{thm:sharpness} shows that the threshold cannot be
lowered using only the boundedness and polynomial-decay hypotheses,
so non-stabilization is the predicted outcome.  We test this by
replacing the SA gain $\rho_t = (t+1)^{-1}$ in RMSProp with constants
$\rho \in \{0.5, 0.999\}$ in the same $d = 5$ Toeplitz($0.4$) design
as above, with step-size $\eta_t = 2.0\,t^{-0.7}$ and ridge
regularization $+I$ (parameters tuned for this stress test; see
the supplement), 100 replications, and $n$ up to
$5 \times 10^6$.

\begin{figure}[tbp]
\centering
\includegraphics[width=0.65\textwidth]{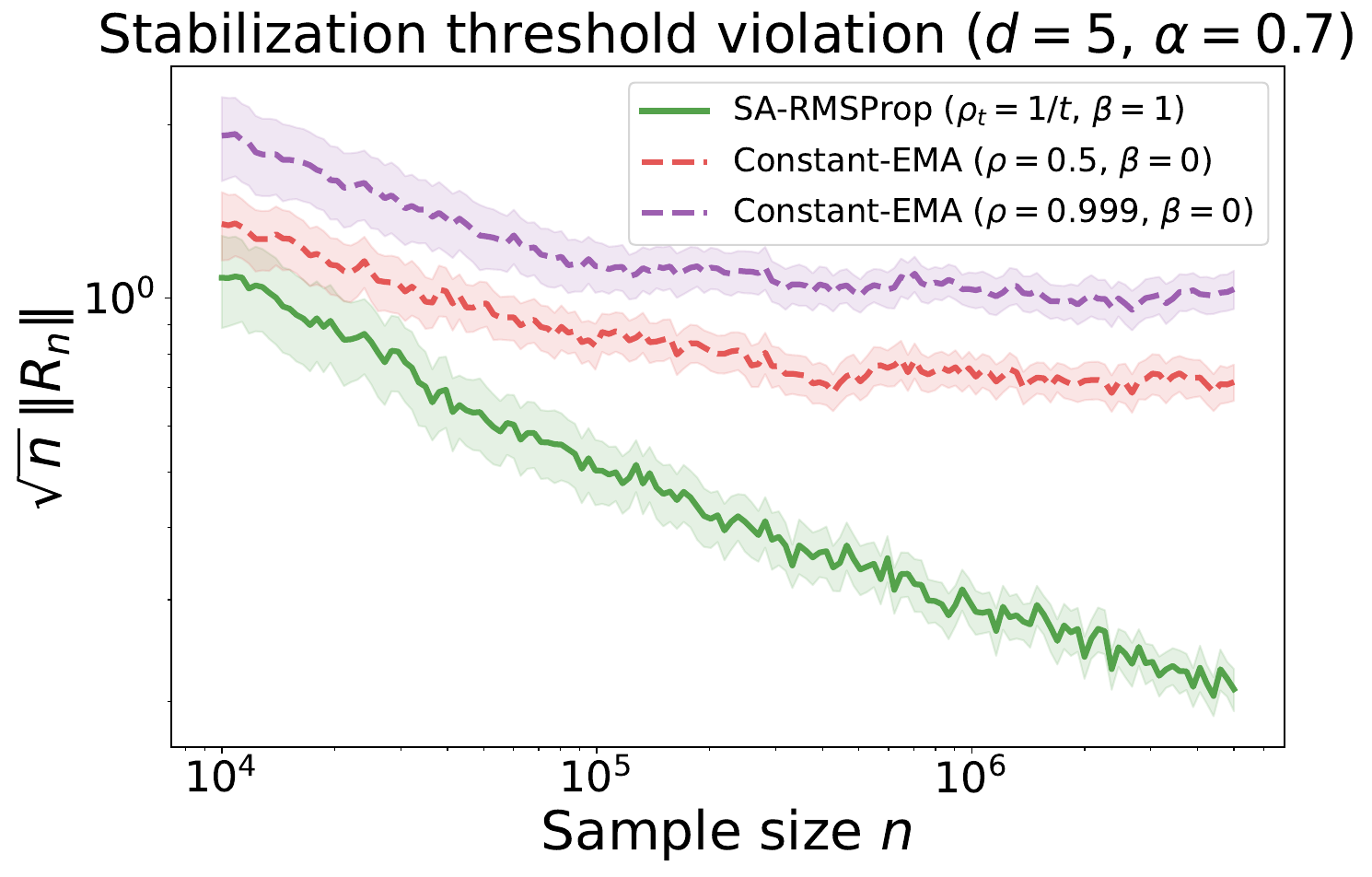}
\caption{Stabilization threshold violation ($d = 5$, $\alpha = 0.7$).
  The scaled remainder $\sqrt{n}\,\|R_n\|$ is shown on log--log axes.
  SA-RMSProp (decaying gain $\rho_t = (t+1)^{-1}$, heuristically
  $\beta = 1$ under unbounded Gaussian inputs; solid green) declines
  steadily (slope $-0.21$), consistent with
  $\beta > (\alpha+1)/2$.  Constant-EMA variants with $\rho = 0.5$
  and~$0.999$ (no positive stabilization rate; dashed) flatten
  (slopes $-0.02$ and $-0.03$), consistent with a non-vanishing
  remainder over the observed sample-size range.
  Shaded bands are $\pm 1.96 \times \mathrm{SE}$ Monte Carlo
  uncertainty bands over 100 replications.
  Coverage and NMSE panels are in the supplement.}
\label{fig:threshold}
\end{figure}

Figure~\ref{fig:threshold} is consistent with the theoretical
prediction.
SA-RMSProp's scaled remainder $\sqrt{n}\,\|R_n\|$ declines with
log--log slope~$-0.21$, while the constant-EMA curves flatten
(slopes~$-0.02$ and~$-0.03$), producing a 3--5$\times$ larger
remainder at $n = 5 \times 10^6$.  This is consistent with the
mechanism illustrated by Proposition~\ref{thm:sharpness}:
non-decaying preconditioner fluctuations accumulate in the
Polyak--Ruppert average.  Coverage
effects are moderate at these sample sizes (all methods reach
roughly $0.95$ at $n = 5 \times 10^6$); the deterministic
construction of Proposition~\ref{thm:sharpness} would force a
sharper divergence.  Additional coverage and NMSE panels and the
log--log slope table are reported in the supplement.

\section{Discussion}
\label{sec:discussion}

We have established a stabilization-rate threshold for the CLT under
dynamically preconditioned Polyak--Ruppert averaging and shown it
cannot be weakened within its hypothesis class.  Prior work
\citep{leluc2023asymptotic,boyer2023stochastic,godichon2024hessian}
retains $P_t$ inside the leading term; to our knowledge, no prior
result identifies a phase transition for the CLT under online
preconditioning at this level of generality.

The threshold classifies the SA preconditioner designs studied here.
Theorem~\ref{thm:sa_stab} shows that SA preconditioners with gain
$\rho_t = c/t$ deliver $L^2$ one-step stabilization of order $t^{-1}$
(pathwise $\beta = 1$ under bounded inputs), covering SA-AdaGrad,
SA-RMSProp, and SA-ONS in a single argument.  By contrast, classical
constant-EMA methods (RMSProp \citep{tieleman2012rmsprop}, Adam
\citep{kingma2015adam}) heuristically do not admit polynomial
stabilization (Section~\ref{sec:simulation}); extending the framework
to constant-EMA and momentum-based methods---via ergodic or mixing-rate
conditions on~$M_t$, with Lemma~\ref{lem:decomp} as the starting
point---is an important open problem.

Proposition~\ref{prop:comparison} places the three SA constructions on a
common operator-factor scale: SA-ONS eliminates the $\kappa(H)$ penalty
carried by the identity baseline, while SA-RMSProp offers $\mathcal{O}(d)$
cost.  The experiments of Section~\ref{sec:simulation} suggest that at
moderate-to-high $d$ diagonal SA-RMSProp can be preferable in finite
samples, even though the operator-factor ordering between diagonal and
full-matrix rules is problem-dependent.

\medskip
\noindent\textbf{Limitations and open problems.}
\begin{enumerate}
  \item \textbf{Coupled lower bounds.}
    Proposition~\ref{thm:sharpness} establishes tightness via a
    deterministic construction in which $\{M_t\}$ and $\{\Delta_t\}$
    are decoupled (Remark~\ref{rem:sharpness_scope}).  Whether the
    threshold $\beta > (\alpha+1)/2$ can be improved by exploiting
    the genuine algorithmic coupling---where $M_t$ depends on
    $\Delta_t$ through the dynamics---remains open.

  \item \textbf{Optimality of $n^{-1/6}$.}
    No lower bound for the normal-approximation rate in nonlinear SA is
    known.  Exploiting cancellation in $n^{-1}\sum_t u_t$ or applying
    Stein's method directly could potentially improve the rate
    (Remark~\ref{rem:rate_gap_open}).

  \item \textbf{Non-strongly-convex objectives.}
    When $\alpha = 1$ or $F$ is merely convex, the present analysis does
    not apply; heuristically the threshold may tighten to $\beta \ge 1$,
    but a rigorous treatment would require new techniques.

  \item \textbf{Sub-Gaussian tails and dimension-explicit constants.}
    Relaxing bounded-gradient assumptions to sub-Gaussian tails,
    extending the non-asymptotic Gaussian approximation bounds of
    \citep{samsonov2024gaussian,butyrin2025improved} to the
    preconditioned setting, and deriving sharp dimension-explicit
    remainder constants are natural next steps.
\end{enumerate}

\begin{supplement}
\stitle{Supplement to ``When Does Dynamic Preconditioning Preserve the Polyak--Ruppert CLT? A Stabilization Threshold''}
\sdescription{This supplement contains deferred proofs of all results stated in the main text
and additional experiments diagnostics.}
\end{supplement}

\bibliographystyle{imsart-number}
\bibliography{references}

\clearpage

\begin{center}
{\Large\bfseries Supplement to ``When Does Dynamic Preconditioning Preserve the Polyak--Ruppert CLT? A Stabilization Threshold''\par}
\end{center}

\bigskip

\renewcommand{\appendixname}{}
\appendix
\setcounter{theorem}{0}
\renewcommand{\thetheorem}{S.\arabic{theorem}}

\noindent
This supplement contains deferred proofs of all results stated in the main
text and additional experiments diagnostics.  The supplementary material
is organized as follows.  Section~\ref{sec:proofs} collects all deferred
proofs from the main text.  Section~\ref{sec:experiments} provides
additional experiments diagnostics.

\section{Deferred Proofs}
\label{sec:proofs}

\subsection{Proof of Proposition~\ref{prop:iterate_bound}}
\label{app:iterate_bound}
\begin{proof}[Proof of Proposition~\ref{prop:iterate_bound}]
Let $\mathbb{E}_t[\cdot] := \mathbb{E}[\cdot \mid \mathcal{F}_{t-1}]$ and define
the excess risk
\[
    a_t := \mathbb{E}[F(x_t) - F(x^*)].
\]
Since $F$ is $L$-smooth on $\mathcal{K}$ and the iterates remain in
$\mathcal{K}$ almost surely, the descent inequality yields
\[
    F(x_{t+1})
    \leq
    F(x_t)
    +
    \nabla F(x_t)^\top (x_{t+1}-x_t)
    +
    \frac{L}{2}\|x_{t+1}-x_t\|^2.
\]
Using $x_{t+1}-x_t = -\eta_t P_t(\nabla F(x_t)+\xi_t)$, we obtain
\[
    F(x_{t+1})
    \leq
    F(x_t)
    -
    \eta_t \nabla F(x_t)^\top P_t(\nabla F(x_t)+\xi_t)
    +
    \frac{L\eta_t^2}{2}\|P_t(\nabla F(x_t)+\xi_t)\|^2.
\]
Taking conditional expectation and using
$\mathbb{E}_t[\xi_t] = 0$ from Assumption~\ref{ass:mg},
\[
    \mathbb{E}_t[F(x_{t+1})]
    \leq
    F(x_t)
    -
    \eta_t \nabla F(x_t)^\top P_t \nabla F(x_t)
    +
    \frac{L\eta_t^2}{2}\,
    \mathbb{E}_t\|P_t(\nabla F(x_t)+\xi_t)\|^2.
\]
Condition~(ii) implies
\[
    \nabla F(x_t)^\top P_t \nabla F(x_t)
    \geq
    p_- \|\nabla F(x_t)\|^2
\]
and
\[
    \|P_t(\nabla F(x_t)+\xi_t)\|^2
    \leq
    p_+^2 \|\nabla F(x_t)+\xi_t\|^2.
\]
Using $\|a+b\|^2 \leq 2\|a\|^2 + 2\|b\|^2$, we, therefore, get
\[
    \mathbb{E}_t[F(x_{t+1})]
    \leq
    F(x_t)
    -
    \eta_t \bigl(p_- - L p_+^2 \eta_t\bigr) \|\nabla F(x_t)\|^2
    +
    L\eta_t^2 p_+^2 \mathbb{E}_t\|\xi_t\|^2.
\]
By Assumption~\ref{ass:var},
\[
    \mathbb{E}_t\|\xi_t\|^2
    =
    \mathrm{Tr}\!\left(\mathbb{E}_t[\xi_t\xi_t^\top]\right)
    \leq
    \mathrm{Tr}(\overline{S})
    =: \sigma^2.
\]
Since $\eta_t \to 0$, setting
$T_0 := \lceil (2L p_+^2 \eta_0 / p_-)^{1/\alpha} \rceil$ gives
$L p_+^2 \eta_t \leq p_-/2$ for all $t \geq T_0$.  For such $t$,
$p_- - L p_+^2 \eta_t \geq p_-/2$, and, therefore,
\[
    \mathbb{E}_t[F(x_{t+1})]
    \leq
    F(x_t)
    -
    \frac{\eta_t p_-}{2} \|\nabla F(x_t)\|^2
    +
    Lp_+^2\sigma^2 \eta_t^2.
\]
Subtracting $F(x^*)$ and applying the Polyak--{\L}ojasiewicz inequality
$\|\nabla F(x)\|^2 \ge 2\mu(F(x)\!-\!F(x^*))$ from $\mu$-strong
convexity (Assumption~\ref{ass:convex}), we obtain
\[
    \mathbb{E}_t[F(x_{t+1}) - F(x^*)]
    \leq
    \bigl(1 - \mu p_- \eta_t\bigr)
    \bigl(F(x_t)-F(x^*)\bigr)
    +
    Lp_+^2\sigma^2 \eta_t^2.
\]
Taking total expectation yields, for $t \geq T_0$,
\[
    a_{t+1}
    \leq
    \bigl(1 - \mu p_- \eta_t\bigr)a_t
    +
    Lp_+^2\sigma^2 \eta_t^2.
\]
Write
\[
    c := \mu p_- \eta_0,
    \qquad
    b := Lp_+^2\sigma^2 \eta_0^2,
\]
so that
\[
    a_{t+1} \leq (1 - c t^{-\alpha})a_t + b t^{-2\alpha}
    \qquad \text{for } t \geq T_0.
\]
Choose $K > b/c$.  By the mean value theorem there exists
$\theta_t \in (t,t+1)$ with
\[
    t^{-\alpha} - (t+1)^{-\alpha}
    =
    \alpha\, \theta_t^{-\alpha-1}
    \leq
    \alpha\, t^{-\alpha-1}.
\]
Because $\alpha < 1$, we have $2\alpha < \alpha+1$, so
$t^{-2\alpha}$ decays more slowly than $t^{-\alpha-1}$.  Therefore, there
exists $T_1 \geq T_0$ such that
\[
    (cK-b)t^{-2\alpha}
    \geq
    K\bigl(t^{-\alpha} - (t+1)^{-\alpha}\bigr)
    \qquad \text{for all } t \geq T_1.
\]
Note that $a_t < \infty$ for each finite $t$.  Indeed, $F$ is
$L$-smooth on $\mathcal{K}$ and $\nabla F(x^*) = 0$, so
$\|\nabla F(x_t)\| = \|\nabla F(x_t) - \nabla F(x^*)\| \leq L\|\Delta_t\|$,
and, therefore,
\[
    \mathbb{E}\|x_{t+1}-x_t\|^2
    \leq
    2\eta_t^2 p_+^2\bigl(L^2\,\mathbb{E}\|\Delta_t\|^2 + \sigma^2\bigr)
\]
by condition~(ii) and Assumption~\ref{ass:var}.  Starting from
$\mathbb{E}\|\Delta_1\|^2 < \infty$ (which holds by hypothesis on the
initialization $x_1$),
$\|\Delta_{t+1}\|^2 \leq 2\|\Delta_t\|^2 + 2\|x_{t+1}-x_t\|^2$ then
gives $\mathbb{E}\|\Delta_t\|^2 < \infty$ for each finite $t$ by
induction on $t$, and $L$-smoothness together with
$\nabla F(x^*) = 0$ yields
$a_t \leq (L/2)\,\mathbb{E}\|\Delta_t\|^2 < \infty$.  In particular,
$\sup_{1 \leq t \leq T_1} a_t\, t^{\alpha}$ is finite, so we may
increase $K$ further if necessary so that $a_t \leq K t^{-\alpha}$ for
all $1 \leq t \leq T_1$.  Then for any
$t \geq T_1$, if $a_t \leq K t^{-\alpha}$,
\begin{align*}
    a_{t+1}
    &\leq
    (1 - c t^{-\alpha})K t^{-\alpha} + b t^{-2\alpha} \\
    &= K t^{-\alpha} - (cK-b)t^{-2\alpha} \\
    &\leq K t^{-\alpha} - K\bigl(t^{-\alpha} - (t+1)^{-\alpha}\bigr) \\
    &= K(t+1)^{-\alpha}.
\end{align*}
Combining the base-case bound for $1 \leq t \leq T_1$ with the step
just displayed, induction on $t$ yields
\[
    a_t \leq K t^{-\alpha}
    \qquad \text{for all } t \geq 1.
\]
Finally, strong convexity gives
\[
    F(x_t)-F(x^*)
    \geq
    \frac{\mu}{2}\|x_t-x^*\|^2
    =
    \frac{\mu}{2}\|\Delta_t\|^2.
\]
Taking expectations,
\[
    \mathbb{E}\|\Delta_t\|^2
    \leq
    \frac{2}{\mu}\,a_t
    \leq
    \frac{2K}{\mu}\,t^{-\alpha}.
\]
Thus, Assumption~\ref{ass:iterate} holds with $C_\Delta = 2K/\mu$.
\end{proof}

\subsection{Proof of Lemma~\ref{lem:decomp}}
\label{app:decomp}
\begin{proof}[Proof of Lemma~\ref{lem:decomp}]
By Assumptions~\ref{ass:convex}--\ref{ass:quadratic}, $H = \nabla^2 F(x^*)$
satisfies $H \succeq \mu I$ and is, therefore, invertible; each $P_t$ is
positive definite, so the matrices
\[
    A_t := \eta_t^{-1}(P_tH)^{-1},
    \qquad t \ge 1,
\]
are well defined.

Expanding the error recursion~\eqref{eq:recursion},
\[
    \Delta_{t+1}
    =
    \Delta_t - \eta_t P_t H \Delta_t - \eta_t P_t \xi_t - \eta_t P_t u_t,
\]
rearranging gives
\[
    \Delta_t - \Delta_{t+1}
    =
    \eta_t P_t\bigl(H\Delta_t + \xi_t + u_t\bigr).
\]
Left-multiplying both sides by $A_t$ and using
the cancellation $(P_tH)^{-1}P_t = H^{-1}$, we obtain
\[
    A_t(\Delta_t-\Delta_{t+1})
    =
    \Delta_t + H^{-1}\xi_t + H^{-1}u_t.
\]
Equivalently,
\begin{equation}
    \Delta_t
    =
    A_t(\Delta_t-\Delta_{t+1})
    - H^{-1}\xi_t
    - H^{-1}u_t.
    \label{eq:lemma_one_step}
\end{equation}

Summing \eqref{eq:lemma_one_step} over $t=1,\ldots,n$ yields
\begin{equation}\label{eq:sum_delta}
    \sum_{t=1}^n \Delta_t
    =
    \sum_{t=1}^n A_t(\Delta_t-\Delta_{t+1})
    -
    H^{-1}\sum_{t=1}^n \xi_t
    -
    H^{-1}\sum_{t=1}^n u_t.
\end{equation}
The first term is expanded by Abel summation:
\begin{align*}
    \sum_{t=1}^n A_t(\Delta_t-\Delta_{t+1})
    &=
    \sum_{t=1}^n A_t\Delta_t
    -
    \sum_{t=1}^n A_t\Delta_{t+1} \\
    &=
    A_1\Delta_1 - A_n\Delta_{n+1}
    + \sum_{t=2}^n (A_t-A_{t-1})\Delta_t.
\end{align*}
Substituting this identity into \eqref{eq:sum_delta} gives
\[
    \sum_{t=1}^n \Delta_t
    =
    A_1\Delta_1 - A_n\Delta_{n+1}
    + \sum_{t=2}^n (A_t-A_{t-1})\Delta_t
    - H^{-1}\sum_{t=1}^n \xi_t
    - H^{-1}\sum_{t=1}^n u_t.
\]
Dividing by $n$ and using
$\overline{\Delta}_n = n^{-1}\sum_{t=1}^n \Delta_t$
proves~\eqref{eq:decomposition} with remainder~\eqref{eq:remainder}.
\end{proof}

\subsection{Proof of Proposition~\ref{prop:variance}}
\label{app:proof_prop_variance}
\begin{proof}[Proof of Proposition~\ref{prop:variance}]
Because $\{\xi_t\}$ is a martingale difference sequence
(Assumption~\ref{ass:mg}), the cross-terms vanish: for $s < t$, the tower
property gives
$\mathbb{E}[\xi_s \xi_t^\top]
= \mathbb{E}\bigl[\xi_s\,\mathbb{E}[\xi_t^\top \mid \mathcal{F}_{t-1}]\bigr]
= 0$,
and transposing covers $s > t$.  Therefore,
\[
  \mathbb{E}\!\left\|\frac{1}{n}H^{-1}\sum_{t=1}^n \xi_t\right\|^2
  = \frac{1}{n^2}\sum_{t=1}^n
    \mathbb{E}\|H^{-1}\xi_t\|^2
  = \frac{1}{n^2}\sum_{t=1}^n
    \mathrm{Tr}\!\bigl(H^{-1}\mathbb{E}[\xi_t\xi_t^\top]H^{-1}\bigr),
\]
where the second equality uses the identity
$\mathbb{E}\|Ax\|^2 = \mathrm{Tr}(A\,\mathbb{E}[xx^\top]A^\top)$.
By Assumption~\ref{ass:var} and the tower property,
$\mathbb{E}[\xi_t\xi_t^\top]
= \mathbb{E}\bigl[\mathbb{E}[\xi_t\xi_t^\top \mid \mathcal{F}_{t-1}]\bigr]
\preceq \overline{S}$.
Applying the congruence map $A \mapsto H^{-1}AH^{-1}$, therefore, yields
\[
  H^{-1}\mathbb{E}[\xi_t\xi_t^\top]H^{-1}
  \preceq
  H^{-1}\overline{S}H^{-1}.
\]
Taking traces gives
\[
  \mathrm{Tr}\!\bigl(H^{-1}\mathbb{E}[\xi_t\xi_t^\top]H^{-1}\bigr)
  \le
  \mathrm{Tr}(H^{-1}\overline{S}\,H^{-1})
\]
for every $t$.  Summing over $t$ in the display above yields the claim.
This is the standard Polyak--Ruppert variance calculation
\citep{polyak1992acceleration,chen2020statistical}.
\end{proof}

\subsection{Proof of Proposition~\ref{prop:taylor}}
\label{app:proof_prop_taylor}
\begin{proof}[Proof of Proposition~\ref{prop:taylor}]
Recall $T_n = -n^{-1}H^{-1}\sum_{t=1}^n u_t$.
Assumption~\ref{ass:quadratic} gives $\|u_t\| \leq L_R \|\Delta_t\|^2$, so
\[
  \mathbb{E}\|T_n\|
  = \mathbb{E}\!\left\|\frac{1}{n}H^{-1}\sum_{t=1}^n u_t\right\|
  \leq \frac{\|H^{-1}\|_{\mathrm{op}}}{n}\sum_{t=1}^n \mathbb{E}\|u_t\|
  \leq \frac{\|H^{-1}\|_{\mathrm{op}}L_R}{n}\sum_{t=1}^n
    \mathbb{E}\|\Delta_t\|^2.
\]
Using Assumption~\ref{ass:iterate},
\[
  \frac{\|H^{-1}\|_{\mathrm{op}}L_R}{n}\sum_{t=1}^n
    \mathbb{E}\|\Delta_t\|^2
  \leq \frac{\|H^{-1}\|_{\mathrm{op}}L_R C_\Delta}{n}\sum_{t=1}^n t^{-\alpha}
  = \mathcal{O}(n^{-\alpha}),
\]
where the last step uses $\sum_{t=1}^n t^{-\alpha} = \mathcal{O}(n^{1-\alpha})$
for $\alpha < 1$.
Multiplying by $\sqrt{n}$ yields
$\sqrt{n}\,\mathbb{E}\|T_n\|
= \mathcal{O}(n^{1/2-\alpha}) \to 0$.

For the $L^2$ bound, Minkowski's inequality together with
the fourth-moment iterate bound gives
\[
  \bigl(\mathbb{E}\|T_n\|^2\bigr)^{1/2}
  \le
  \frac{\|H^{-1}\|_{\mathrm{op}}}{n}\sum_{t=1}^n
    \bigl(\mathbb{E}\|u_t\|^2\bigr)^{1/2}
  \le
  \frac{\|H^{-1}\|_{\mathrm{op}}L_R}{n}\sum_{t=1}^n
    \bigl(\mathbb{E}\|\Delta_t\|^4\bigr)^{1/2}.
\]
Since $\bigl(\mathbb{E}\|\Delta_t\|^4\bigr)^{1/2} \le C_{\Delta,4}^{1/2} t^{-\alpha}$,
we obtain
\[
  \bigl(\mathbb{E}\|T_n\|^2\bigr)^{1/2}
  \le
  \frac{\|H^{-1}\|_{\mathrm{op}}L_R C_{\Delta,4}^{1/2}}{n}
  \sum_{t=1}^n t^{-\alpha}
  = \mathcal{O}(n^{-\alpha}).
\]
Squaring gives $\mathbb{E}\|T_n\|^2 = \mathcal{O}(n^{-2\alpha})$,
and, therefore,
$\mathbb{E}\|\sqrt{n}\,T_n\|^2 = \mathcal{O}(n^{1-2\alpha}) \to 0$
since $\alpha > 1/2$.
\end{proof}

\subsection{Proof of Theorem~\ref{thm:sharp_threshold}}
\label{app:proof_thm_sharp_threshold}
\begin{proof}[Proof of Theorem~\ref{thm:sharp_threshold}]
By the hypotheses, the uniform a.s.\ bound $\|M_t\|_{\mathrm{op}} \leq
C_P$ holds for all $t \geq 1$, and the stabilization-rate bound
$\|M_t - M_{t-1}\|_{\mathrm{op}} \leq C_M\, t^{-\beta}$ holds almost
surely for all $t \geq 2$.  Since $C_P$ and $C_M$ are deterministic,
these a.s.\ bounds pass inside $L^2$-expectations: for every $t \geq 1$,
\[
    \|M_t\,\Delta_t\|_{L^2} \leq C_P\,\|\Delta_t\|_{L^2},
\]
and for every $t \geq 2$,
\[
    \|(M_t - M_{t-1})\Delta_t\|_{L^2}
    \leq C_M\, t^{-\beta}\,\|\Delta_t\|_{L^2}.
\]
In particular, $\|M_1\,\Delta_1\|_{L^2} \leq C_P\,\|\Delta_1\|_{L^2}$
is a finite deterministic constant by Assumption~\ref{ass:iterate}.

Applying Minkowski's inequality to~\eqref{eq:remainder}, we obtain
\begin{align*}
  \|R_n(\{P_t\})\|_{L^2}
  &\leq \frac{1}{n}\Bigl[
      \eta_1^{-1}\|M_1 \Delta_1\|_{L^2}
      + C_P\,\eta_n^{-1}\|\Delta_{n+1}\|_{L^2}\\
  &\qquad\quad
      + \sum_{t=2}^n
        \bigl\|(\eta_t^{-1}M_t - \eta_{t-1}^{-1}M_{t-1})\Delta_t\bigr\|_{L^2}
    \Bigr].
\end{align*}

The first boundary term is $O(n^{-1})$, since
$\eta_1^{-1}\|M_1 \Delta_1\|_{L^2} \leq \eta_1^{-1}\,C_P\,\|\Delta_1\|_{L^2}$
is a finite deterministic constant independent of~$n$.  For the second
boundary term, using
$\eta_n^{-1} = \eta_0^{-1}n^\alpha$ and
$\|\Delta_{n+1}\|_{L^2} \leq C_\Delta^{1/2}(n+1)^{-\alpha/2}$, we obtain
\[
  \frac{C_P}{n}\,\eta_n^{-1}\|\Delta_{n+1}\|_{L^2}
  =
  O\!\left(n^{\alpha/2-1}\right).
\]

For the summation term, using~\eqref{eq:coupled_diff} together with
the deterministic a.s.\ bounds, we split
\begin{align*}
  &\frac{1}{n}\sum_{t=2}^n
    \bigl\|(\eta_t^{-1}M_t - \eta_{t-1}^{-1}M_{t-1})\Delta_t\bigr\|_{L^2} \\
  &\qquad\le
    \frac{C_P}{n}\sum_{t=2}^n
    |\eta_t^{-1}-\eta_{t-1}^{-1}|\,\|\Delta_t\|_{L^2} \\
  &\qquad\quad+
    \frac{1}{n}\sum_{t=2}^n
    \eta_{t-1}^{-1}\,\|(M_t - M_{t-1})\Delta_t\|_{L^2}.
\end{align*}
By the mean value theorem,
\[
  |\eta_t^{-1}-\eta_{t-1}^{-1}|
  =
  \eta_0^{-1}\bigl(t^\alpha-(t-1)^\alpha\bigr)
  =
  O(t^{\alpha-1}),
\]
and $\|\Delta_t\|_{L^2} = O(t^{-\alpha/2})$, so the step-size
contribution satisfies
\[
  \frac{C_P}{n}\sum_{t=2}^n
  O(t^{\alpha-1})\,O(t^{-\alpha/2})
  =
  O\!\left(n^{\alpha/2-1}\right).
\]
For the stabilization contribution, the stabilization-rate bound gives
$\|(M_t - M_{t-1})\Delta_t\|_{L^2} \leq C_M\, t^{-\beta}\,\|\Delta_t\|_{L^2}$
for all $t \geq 2$, and since $\eta_{t-1}^{-1}\, t^{-\beta}
= O(t^{\alpha-\beta})$, we obtain
\[
  \frac{C_M}{n}\sum_{t=2}^n
  O(t^{\alpha-\beta})\,O(t^{-\alpha/2})
  =
  O\!\left(n^{\alpha/2-\beta}\right)
\]
when $\beta \le 1$.  The step-size contribution is already
$O(n^{\alpha/2-1})$, which matches the stabilization contribution at
$\beta=1$; stabilization rates $\beta > 1$, therefore, provide no
additional improvement, and we capture this by setting
$\tilde\beta := \min\{1,\beta\}$, so that the combined sum satisfies
\[
  \frac{1}{n}\sum_{t=2}^n
  O\!\left(t^{\alpha-\tilde\beta}\right)\,O(t^{-\alpha/2})
  =
  O\!\left(n^{\alpha/2-\tilde\beta}\right).
\]
Hence,
\[
  \|R_n(\{P_t\})\|_{L^2}
  =
  O(n^{-1}) + O(n^{\alpha/2-1}) + O\!\left(n^{\alpha/2-\tilde\beta}\right)
  =
  O\!\left(n^{\alpha/2-\tilde\beta}\right),
\]
and so
\[
  \sqrt{n}\,\|R_n(\{P_t\})\|_{L^2}
  =
  O\!\left(n^{(\alpha+1)/2-\tilde\beta}\right).
\]
This converges to zero whenever
\[
  \tilde\beta > \frac{\alpha+1}{2}.
\]
Because $(\alpha+1)/2 < 1$, this condition is equivalent to
$\beta > (\alpha+1)/2 = \alpha/2 + 1/2$, which proves the claim.
\end{proof}

\subsection{Proof of Proposition~\ref{thm:sharpness}}
\label{app:proof_thm_sharpness}
\begin{proof}[Proof of Proposition~\ref{thm:sharpness}]
We work in one dimension and construct deterministic scalar sequences, so both
operator norms and $L^2$-norms coincide with absolute value.

Let
\[
C_\beta
:=
\sup_{t\ge2}
\left|
\sum_{s=2}^t (-1)^s s^{-\beta}
\right|
< \infty,
\]
which is finite by the Leibniz alternating-series test, since
$\sum_{s\ge2}(-1)^s s^{-\beta}$ converges for every $\beta>0$.  Choose
constants $c_\Delta,c_M>0$ and $m_0>c_M C_\beta$, and define
\[
\Delta_t := c_\Delta (-1)^t t^{-\alpha/2},
\qquad
M_t := m_0 + c_M\sum_{s=2}^t (-1)^s s^{-\beta},
\qquad
P_t := M_t^{-1},
\]
where the empty-sum convention gives $M_1 = m_0$.
Then
\[
m_0-c_M C_\beta \le M_t \le m_0+c_M C_\beta \qquad (t\ge2),
\]
so $\{M_t\}$ is bounded and bounded away from zero, hence, $\{P_t\}$ is a
bounded positive scalar preconditioner sequence. Moreover,
\[
M_t-M_{t-1}=c_M(-1)^t t^{-\beta},
\]
so
\[
|M_t-M_{t-1}| = c_M t^{-\beta} = \Theta(t^{-\beta}),
\qquad
|M_t|=\Theta(1),
\qquad
|\Delta_t|=c_\Delta t^{-\alpha/2}=\Theta(t^{-\alpha/2}).
\]

Now decompose (4) as
\[
R_n(\{P_t\}) = B_n + S_n + D_n,
\]
where
\[
B_n
:=
\frac{1}{n}
\Bigl[
\eta_1^{-1}M_1\Delta_1
-
\eta_n^{-1}M_n\Delta_{n+1}
\Bigr],
\]
\[
S_n
:=
\frac{1}{n}
\sum_{t=2}^n
\bigl(\eta_t^{-1}-\eta_{t-1}^{-1}\bigr)M_t\Delta_t,
\]
and
\[
D_n
:=
\frac{1}{n}
\sum_{t=2}^n
\eta_{t-1}^{-1}(M_t-M_{t-1})\Delta_t.
\]
Since $\eta_t=\eta_0 t^{-\alpha}$, we have
\[
\eta_{t-1}^{-1}=\eta_0^{-1}(t-1)^\alpha.
\]

We first estimate the dynamic term $D_n$. By construction, the alternating
signs cancel, so every summand is positive:
\[
D_n
=
\frac{c_\Delta c_M}{\eta_0 n}
\sum_{t=2}^n
(t-1)^\alpha t^{-\beta-\alpha/2}.
\]
For $t\ge2$,
\[
(t-1)^\alpha
=
t^\alpha\left(1-\frac{1}{t}\right)^\alpha,
\]
and, therefore,
\[
2^{-\alpha} t^{\alpha/2-\beta}
\le
(t-1)^\alpha t^{-\beta-\alpha/2}
\le
t^{\alpha/2-\beta}.
\]
Hence,
\[
D_n
=
\Theta\!\left(
\frac{1}{n}\sum_{t=2}^n t^{\alpha/2-\beta}
\right).
\]
Because $\beta\le(\alpha+1)/2$, we have
\[
\frac{\alpha}{2}-\beta \ge -\frac12 > -1,
\]
so the standard power-sum estimate gives
\[
\frac{1}{n}\sum_{t=2}^n t^{\alpha/2-\beta}
=
\Theta\!\left(n^{\alpha/2-\beta}\right).
\]
Therefore,
\[
D_n=\Theta\!\left(n^{\alpha/2-\beta}\right).
\]

Next, the boundary term satisfies
\[
|B_n|
\le
\frac{1}{n}
\Bigl[
\eta_1^{-1}|M_1||\Delta_1|
+
\eta_n^{-1}|M_n||\Delta_{n+1}|
\Bigr]
=
O(n^{-1}) + O\!\left(n^{\alpha/2-1}\right)
=
O\!\left(n^{\alpha/2-1}\right).
\]

For the step-size term, boundedness of $M_t$ and the mean-value estimate
\[
\eta_t^{-1}-\eta_{t-1}^{-1}
=
\eta_0^{-1}\bigl(t^\alpha-(t-1)^\alpha\bigr)
=
O(t^{\alpha-1})
\]
yield, for a constant $C$ depending on $\sup_t|M_t|$, $c_\Delta$, and
$\eta_0$,
\[
|S_n|
\le
\frac{C}{n}\sum_{t=2}^n t^{\alpha-1} t^{-\alpha/2}
=
\frac{C}{n}\sum_{t=2}^n t^{\alpha/2-1}
=
O\!\left(n^{\alpha/2-1}\right).
\]

Since $\alpha<1$, we have
\[
\frac{\alpha}{2}-1 < -\frac12 \le \frac{\alpha}{2}-\beta,
\]
and, thus,
\[
|B_n|+|S_n|
=
o\!\left(n^{\alpha/2-\beta}\right)
=
o(D_n).
\]
Therefore,
\[
R_n(\{P_t\}) = D_n + o(D_n),
\]
which implies
\[
|R_n(\{P_t\})|
=
\Theta\!\left(n^{\alpha/2-\beta}\right).
\]
Since the construction is deterministic, this is exactly
\[
\|R_n(\{P_t\})\|_{L^2}
=
\Theta\!\left(n^{\alpha/2-\beta}\right).
\]

Multiplying by $\sqrt{n}$ yields
\[
\sqrt{n}\,\|R_n(\{P_t\})\|_{L^2}
=
\begin{cases}
\Theta\!\left(n^{(\alpha+1)/2-\beta}\right),
& 0<\beta<(\alpha+1)/2,\\[1mm]
\Theta(1),
& \beta=(\alpha+1)/2.
\end{cases}
\]
Hence, $\sqrt{n}\,\|R_n(\{P_t\})\|_{L^2}\not\to0$ in general when
$\beta\le(\alpha+1)/2$, which establishes sharpness relative to the
hypotheses of Theorem~\ref{thm:sharp_threshold}.
\end{proof}

\subsection{Proof of Corollary~\ref{cor:adaptive_iterate}}
\label{app:proof_cor_adaptive}
\begin{proof}[Proof of Corollary~\ref{cor:adaptive_iterate}]
\textit{Part~(i): SA-AdaGrad.}
The recursion
\[
    C_t = (1-\rho_t)C_{t-1} + \rho_t(g_t g_t^\top + \epsilon I)
\]
starts from $C_0 = \epsilon I$.  Since $\|g_t\| \le G$ almost surely,
\[
    0 \preceq g_t g_t^\top \preceq \|g_t\|^2 I \preceq G^2 I,
\]
and, therefore,
\[
    \epsilon I \preceq g_t g_t^\top + \epsilon I \preceq (G^2+\epsilon)I.
\]
Because $0 < \rho_t \le 1$, the update is a convex combination of matrices in
the order interval $[\epsilon I,\,(G^2+\epsilon)I]$.  By induction,
\[
    \epsilon I \preceq C_t \preceq (G^2+\epsilon)I
    \qquad \text{almost surely for all } t \ge 0.
\]
Since $P_t = C_{t-1}^{-1/2}$ and the inverse square-root reverses Loewner
order on positive definite matrices,
\[
    \frac{1}{\sqrt{G^2+\epsilon}}\,I
    \preceq
    P_t
    \preceq
    \frac{1}{\sqrt{\epsilon}}\,I.
\]

\textit{Part~(i) continued: SA-RMSProp.}
The recursion
\[
    v_t = (1-\rho_t)v_{t-1} + \rho_t(g_t \odot g_t + \epsilon \mathbf{1})
\]
starts from $v_0 = \epsilon \mathbf{1}$.  Since $\|g_t\| \le G$ almost surely,
each coordinate satisfies
\[
    0 \le g_{t,j}^2 \le G^2,
    \qquad
    \epsilon \le g_{t,j}^2 + \epsilon \le G^2+\epsilon.
\]
Again using $0 < \rho_t \le 1$, induction gives
\[
    \epsilon \mathbf{1} \le v_t \le (G^2+\epsilon)\mathbf{1}
    \qquad \text{coordinatewise almost surely for all } t \ge 0.
\]
Hence,
\[
    \frac{1}{\sqrt{G^2+\epsilon}}\,I
    \preceq
    P_t = \mathrm{Diag}(v_{t-1})^{-1/2}
    \preceq
    \frac{1}{\sqrt{\epsilon}}\,I.
\]

\textit{Part~(ii).}
Under Assumption~\ref{ass:mg} and hypothesis~(a),
\[
    \nabla F(x_t)
    = \mathbb{E}[g_t \mid \mathcal{F}_{t-1}],
    \qquad
    \|\nabla F(x_t)\|
    \le \mathbb{E}[\|g_t\| \mid \mathcal{F}_{t-1}]
    \le G.
\]
Therefore,
\[
    \|\xi_t\|
    = \|g_t - \nabla F(x_t)\|
    \le 2G,
\]
so that
$\xi_t\xi_t^\top \preceq \|\xi_t\|^2 I \preceq 4G^2 I$ almost surely, and,
therefore,
\[
    \mathbb{E}[\xi_t\xi_t^\top \mid \mathcal{F}_{t-1}]
    \preceq 4G^2 I.
\]
Hypothesis~(a), thus, gives the explicit constant
$\overline S = 4G^2 I$ in Assumption~\ref{ass:var}.
For both SA-AdaGrad and SA-RMSProp, part~(i) gives the uniform ellipticity
bounds required in condition~(ii) of Proposition~\ref{prop:iterate_bound},
with
\[
    p_- = \frac{1}{\sqrt{G^2+\epsilon}},
    \qquad
    p_+ = \frac{1}{\sqrt{\epsilon}}.
\]
Hypothesis~(b) verifies condition~(i) of
Proposition~\ref{prop:iterate_bound}, while Assumptions~\ref{ass:mg}, \ref{ass:var}, and~\ref{ass:convex} are now in force.  Therefore,
Proposition~\ref{prop:iterate_bound} yields a constant $C_\Delta > 0$ such
that
\[
    \mathbb{E}\|\Delta_t\|^2 \le C_\Delta\, t^{-\alpha}
    \qquad \text{for all } t \ge 1,
\]
which is precisely Assumption~\ref{ass:iterate}.
\end{proof}

\subsection{Proof of Corollary~\ref{cor:ons_iterate}}
\label{app:proof_cor_ons}
\begin{proof}[Proof of Corollary~\ref{cor:ons_iterate}]
\textit{Part~(i).}
Because $\epsilon \in [h_-,h_+]$, the initialization satisfies
\[
    h_- I \preceq B_0 \preceq h_+ I.
\]
Since $0 < c \le 1$, we have $0 < \rho_t = c/t \le 1$ for every $t \ge 1$.
Hence,
\[
    B_t = (1-\rho_t)B_{t-1} + \rho_t \hat{H}_t
\]
is a convex combination of $B_{t-1}$ and $\hat{H}_t$.  If
\[
    h_- I \preceq B_{t-1} \preceq h_+ I
\]
and
\[
    h_- I \preceq \hat{H}_t \preceq h_+ I,
\]
then
\[
    h_- I
    \preceq
    (1-\rho_t)B_{t-1} + \rho_t \hat{H}_t
    \preceq
    h_+ I.
\]
By induction,
\[
    h_- I \preceq B_t \preceq h_+ I
    \qquad \text{almost surely for all } t \ge 0.
\]
Since inversion reverses the Loewner order on $\mathbb{S}_{++}^d$, taking
inverses yields
\[
    h_+^{-1}I \preceq P_t = B_{t-1}^{-1} \preceq h_-^{-1}I
    \qquad \text{almost surely for all } t \ge 1.
\]

\textit{Part~(ii).}
Part~(i) verifies condition~(ii) of Proposition~\ref{prop:iterate_bound} with
$p_- = h_+^{-1}$ and $p_+ = h_-^{-1}$, while hypothesis~(b) verifies
condition~(i).  Together with Assumptions~\ref{ass:mg}--\ref{ass:convex},
Proposition~\ref{prop:iterate_bound}, therefore, yields a constant $C_\Delta > 0$
such that
\[
    \mathbb{E}\|\Delta_t\|^2 \le C_\Delta\, t^{-\alpha}
    \qquad \text{for all } t \ge 1,
\]
which is precisely Assumption~\ref{ass:iterate} for the SA-ONS recursion.
\end{proof}

\subsection{Proof of Theorem~\ref{thm:sa_stab}}
\label{app:proof_thm_stab}
\begin{proof}[Proof of Theorem~\ref{thm:sa_stab}]
\textit{Part~(i).}
Since $\rho_t \in (0,1]$ and both $Q_{t-1} \succeq \epsilon I$ and
$W_t \succeq \epsilon I$, the convex combination preserves
$Q_t \succeq \epsilon I$.  The $L^2$ boundedness follows from the contraction
\[
    \|Q_t\|_{L^2(\mathrm{op})}
    \le (1-\rho_t)\|Q_{t-1}\|_{L^2(\mathrm{op})} + \rho_t C_W,
\]
and, therefore, $\sup_t \|Q_t\|_{L^2(\mathrm{op})} < \infty$ by induction.

\textit{Part~(ii).}
From~\eqref{eq:sa_general_stab},
$Q_t - Q_{t-1} = \rho_t(W_t - Q_{t-1})$, so
\[
    \|Q_t - Q_{t-1}\|_{L^2(\mathrm{op})}
    \le \rho_t\bigl(C_W + \sup_s\|Q_s\|_{L^2(\mathrm{op})}\bigr)
    = \mathcal{O}(t^{-1}).
\]

\textit{Part~(iii).}
We treat the two spectral maps in turn.

\textit{Case $\varphi = A^{-1}$.}
Here $M_t = H^{-1}Q_{t-1}$, so
$M_t - M_{t-1} = H^{-1}(Q_{t-1}-Q_{t-2})$. Hence,
\[
    \|M_t - M_{t-1}\|_{L^2(\mathrm{op})}
    \le \|H^{-1}\|_{\mathrm{op}}
    \|Q_{t-1}-Q_{t-2}\|_{L^2(\mathrm{op})}
    = \mathcal{O}(t^{-1}).
\]
Moreover,
\[
    \|M_t\|_{L^2(\mathrm{op})}
    \le \|H^{-1}\|_{\mathrm{op}} \|Q_{t-1}\|_{L^2(\mathrm{op})}
    = \mathcal{O}(1).
\]

\textit{Case $\varphi = A^{-1/2}$.}
Here $M_t = H^{-1}Q_{t-1}^{1/2}$.  By the Fr\'{e}chet
derivative bound for the matrix square root,
\[
    \left\|Q_{t-1}^{1/2} - Q_{t-2}^{1/2}\right\|_{\mathrm{op}}
    \le \frac{1}{2\sqrt{\epsilon}}\,
    \left\|Q_{t-1}-Q_{t-2}\right\|_{\mathrm{op}}
    \qquad \text{almost surely},
\]
using $Q_t \succeq \epsilon I$ from~(i) and the perturbation bound for
operator monotone functions
\citep[Theorem~V.3.3]{bhatia1997matrix}.  Taking $L^2(\mathrm{op})$ norms gives
\[
    \left\|Q_{t-1}^{1/2} - Q_{t-2}^{1/2}\right\|_{L^2(\mathrm{op})}
    \le \frac{1}{2\sqrt{\epsilon}}\,
    \left\|Q_{t-1}-Q_{t-2}\right\|_{L^2(\mathrm{op})}
    = \mathcal{O}(t^{-1}).
\]
Therefore,
\[
    \|M_t - M_{t-1}\|_{L^2(\mathrm{op})}
    \le \|H^{-1}\|_{\mathrm{op}}
    \left\|Q_{t-1}^{1/2} - Q_{t-2}^{1/2}\right\|_{L^2(\mathrm{op})}
    = \mathcal{O}(t^{-1}).
\]
Also,
\begin{align*}
    \|M_t\|_{L^2(\mathrm{op})}
    &= \|H^{-1}Q_{t-1}^{1/2}\|_{L^2(\mathrm{op})}
    \le \|H^{-1}\|_{\mathrm{op}}\,
    \|Q_{t-1}^{1/2}\|_{L^2(\mathrm{op})} \\
    &\le \|H^{-1}\|_{\mathrm{op}}\,
    \|Q_{t-1}\|_{L^2(\mathrm{op})}^{1/2}
    = \mathcal{O}(1).
\end{align*}

\textit{Part~(iv).}
If $\|W_t\|_{\mathrm{op}} \le C_{\max}$ almost surely, then
$\sup_t \|Q_t\|_{\mathrm{op}} \le \max(C_{\max}, \|Q_0\|_{\mathrm{op}})$ by
induction, and $\|Q_t - Q_{t-1}\|_{\mathrm{op}} = \mathcal{O}(t^{-1})$ follows
from the same calculation as~(ii).  The rest of the proof carries over with
$L^2(\mathrm{op})$ norms replaced by operator norms.

\textit{Convergence under a deterministic target.}
Assume now that $\mathbb{E}[W_t \mid \mathcal{F}_{t-1}] = \bar W$ almost surely
for all $t \ge 1$, where $\bar W \in \mathbb{S}_{++}^d$ is deterministic.
Writing $E_t := Q_t - \bar W$ and $Z_t := W_t - \bar W$, we obtain
\[
    E_t = (1-\rho_t)E_{t-1} + \rho_t Z_t,
    \qquad
    \mathbb{E}[Z_t \mid \mathcal{F}_{t-1}] = 0.
\]
Because $\sup_t \|W_t\|_{L^2(\mathrm{op})} < \infty$, the martingale increments
$Z_t$ have uniformly bounded second moments.  To reduce to a nonnegative
process, fix an entry $(i,j)$ and set $e_t := (E_t)_{ij}$,
$z_t := (Z_t)_{ij}$, so that $e_t = (1-\rho_t)e_{t-1} + \rho_t z_t$
with $\mathbb{E}[z_t \mid \mathcal{F}_{t-1}] = 0$.  Squaring and taking
conditional expectations gives
\[
    \mathbb{E}[e_t^2 \mid \mathcal{F}_{t-1}]
    \;=\; (1-\rho_t)^2\,e_{t-1}^2
    \;+\; \rho_t^2\,\mathbb{E}[z_t^2 \mid \mathcal{F}_{t-1}]
    \;=\; (1+\rho_t^2)\,e_{t-1}^2
    \;-\; 2\rho_t\,e_{t-1}^2
    \;+\; \gamma_t,
\]
where $\gamma_t := \rho_t^2\,\mathbb{E}[z_t^2 \mid \mathcal{F}_{t-1}] \ge 0$.
Since $\sum_t \rho_t^2 < \infty$, $\sum_t \rho_t = \infty$, and
$\sum_t \gamma_t < \infty$ a.s.\ (because
$\sum_t \mathbb{E}[\gamma_t]
 = \sum_t \rho_t^2\,\mathbb{E}[z_t^2]
 \le \sup_t \mathbb{E}\|Z_t\|_{\mathrm{op}}^2 \cdot \sum_t \rho_t^2 < \infty$),
the Robbins--Siegmund almost-supermartingale convergence
theorem~\citep[Theorem~1]{robbins1971convergence} applied to the
nonnegative process $e_t^2$ yields $e_t^2 \to V_\infty$ a.s.\ and
$\sum_t \rho_t\,e_{t-1}^2 < \infty$ a.s.; the divergence of
$\sum_t \rho_t$ forces $V_\infty = 0$.  Since $d$ is finite,
$E_t \to 0$ entrywise and, hence,
$Q_t \xrightarrow{a.s.} \bar W$.  Since
$Q_t \succeq \epsilon I$ almost surely for all $t$, the maps
$A \mapsto A^{-1}$ and $A \mapsto A^{-1/2}$ are continuous on the relevant
uniformly elliptic set, so $P_t \xrightarrow{a.s.} P_\infty := \varphi(\bar W)$.
Consequently,
\[
    M_t = (P_tH)^{-1} \xrightarrow{a.s.} (P_\infty H)^{-1} = M_\infty.
\]
\end{proof}

\subsection{Proof of Corollary~\ref{cor:sa_stab}}
\label{app:proof_cor_stab}
\begin{proof}[Proof of Corollary~\ref{cor:sa_stab}]
We verify the hypotheses of Theorem~\ref{thm:sa_stab} for each construction.

\textit{SA-AdaGrad.}
Set $Q_t = C_t$, $W_t = g_tg_t^\top + \epsilon I$, and $\varphi = A^{-1/2}$.
Assumption~\ref{ass:g4} gives
\[
    \|W_t\|_{L^2(\mathrm{op})}
    =
    \left(\mathbb{E}\|g_t g_t^\top + \epsilon I\|_{\mathrm{op}}^2\right)^{1/2}
    \le
    \left(\mathbb{E}\|g_t\|^4\right)^{1/2} + \epsilon
    \le C_{g,4}^{1/2} + \epsilon.
\]
Since $W_t \succeq \epsilon I$, the hypotheses
of Theorem~\ref{thm:sa_stab}(iii) are satisfied.  If $\|g_t\| \le G$,
then $\|W_t\|_{\mathrm{op}} \le G^2 + \epsilon$, so part~(iv) applies.

\textit{SA-RMSProp.}
Set $Q_t = \mathrm{Diag}(v_t)$,
$W_t = \mathrm{Diag}(g_t \odot g_t + \epsilon\mathbf{1})$, and
$\varphi = A^{-1/2}$.
Each diagonal entry satisfies $[W_t]_{jj} = g_{t,j}^2 + \epsilon \ge \epsilon$, and
\[
    \|W_t\|_{L^2(\mathrm{op})}
    =
    \left(\mathbb{E}\|g_t \odot g_t + \epsilon\mathbf{1}\|_\infty^2\right)^{1/2}
    \le
    \left(\mathbb{E}\|g_t\|^4\right)^{1/2} + \epsilon
    \le C_{g,4}^{1/2} + \epsilon.
\]
Thus, the hypotheses of Theorem~\ref{thm:sa_stab}(iii) are satisfied.
If $\|g_t\| \le G$, then
$\|W_t\|_{\mathrm{op}} = \|g_t \odot g_t + \epsilon\mathbf{1}\|_\infty
\le G^2 + \epsilon$, so part~(iv) applies.

\textit{SA-ONS.}
Set $Q_t = B_t$, $W_t = \hat{H}_t$, and $\varphi = A^{-1}$, with
$C_W = C_H^{1/2}$.  The stated assumptions give
$W_t \succeq \epsilon I$ almost surely,
$\sup_t \|W_t\|_{L^2(\mathrm{op})} \le C_H^{1/2}$, and
$\mathbb{E}[W_t \mid \mathcal{F}_{t-1}] = H$.  Hence, the hypotheses of
Theorem~\ref{thm:sa_stab}(iii) and of its deterministic-target clause are
satisfied with $\bar W = H$, so $P_\infty = H^{-1}$ and
$M_\infty = (H^{-1}H)^{-1} = I$.  If $\|\hat{H}_t\|_{\mathrm{op}} \le h_+$
almost surely, then part~(iv) applies as well.
\end{proof}

\subsection{Proof of Corollary~\ref{cor:threshold}}
\label{app:proof_cor_threshold}
\begin{proof}[Proof of Corollary~\ref{cor:threshold}]
Since $\eta_t^{-1} = \eta_0^{-1} t^\alpha$, the mean value theorem gives
$t^\alpha - (t-1)^\alpha = \alpha\,\xi^{\alpha-1}$ for some
$\xi \in (t-1,t)$, and since $\alpha < 1$ and $\xi \ge t-1$, we obtain
\[
    \left|\eta_t^{-1} - \eta_{t-1}^{-1}\right|
    = \eta_0^{-1}\left|t^\alpha - (t-1)^\alpha\right|
    = \mathcal{O}(t^{\alpha-1}).
\]
Also, $\eta_{t-1}^{-1} = \mathcal{O}(t^\alpha)$.  Theorem~\ref{thm:sa_stab}
and Corollary~\ref{cor:sa_stab} give
\[
    \|M_t\|_{L^2(\mathrm{op})} = \mathcal{O}(1),
    \qquad
    \|M_t - M_{t-1}\|_{L^2(\mathrm{op})} = \mathcal{O}(t^{-1}),
\]
where both bounds are supplied directly by Theorem~\ref{thm:sa_stab}(iii)
and recorded for each SA construction in Corollary~\ref{cor:sa_stab}.  Applying the triangle inequality to the
decomposition~\eqref{eq:coupled_diff} and substituting these bounds yields
$\|\eta_t^{-1}M_t - \eta_{t-1}^{-1}M_{t-1}\|_{L^2(\mathrm{op})}
= \mathcal{O}(t^{\alpha-1})$.  Under the additional almost-sure boundedness
assumptions in Corollary~\ref{cor:sa_stab}, the Lipschitz-spectral-map
argument of Theorem~\ref{thm:sa_stab} runs pathwise with
$\|\cdot\|_{\mathrm{op}}$ replacing $\|\cdot\|_{L^2(\mathrm{op})}$, giving
$\|M_t - M_{t-1}\|_{\mathrm{op}} = \mathcal{O}(t^{-1})$ and
$\|M_t\|_{\mathrm{op}} = \mathcal{O}(1)$ almost surely, so
Definition~\ref{def:beta} yields the pathwise stabilization rate $\beta = 1$.
In particular, since $\alpha \in (1/2,1)$, we have $1 > (\alpha+1)/2$, so
this pathwise rate lies strictly above the threshold in
Theorem~\ref{thm:sharp_threshold}.
\end{proof}

\subsection{Proof of Proposition~\ref{prop:l2_remainder}}
\label{app:proof_prop_l2_remainder}
\begin{proof}[Proof of Proposition~\ref{prop:l2_remainder}]
With $A_t := \eta_t^{-1}(P_tH)^{-1} = \eta_t^{-1}M_t$, the
decomposition~\eqref{eq:remainder} reads
$n\,R_n(\{P_t\}) = B_1 - B_2 + B_3$, where
\[
    B_1 := A_1\,\Delta_1,
    \qquad
    B_2 := A_n\,\Delta_{n+1},
    \qquad
    B_3 := \sum_{t=2}^n (A_t - A_{t-1})\,\Delta_t.
\]
Taking norms, applying the triangle inequality pointwise, taking
expectations, and multiplying by $\sqrt{n}$ yields
\begin{equation}\label{eq:l2_split}
    \mathbb{E}\bigl\|\sqrt{n}\,R_n\bigr\|
    \;\le\;
    \frac{1}{\sqrt{n}}\Bigl(
        \mathbb{E}\|B_1\|
        \;+\; \mathbb{E}\|B_2\|
        \;+\; \mathbb{E}\|B_3\|
    \Bigr).
\end{equation}
Throughout we use Cauchy--Schwarz in the form
$\mathbb{E}[\|X\|_{\mathrm{op}}\|Y\|]
\le \|X\|_{L^2(\mathrm{op})}\,\|Y\|_{L^2}$,
together with the iterate bound
$\|\Delta_t\|_{L^2} \le C_\Delta^{1/2}\,t^{-\alpha/2}$ supplied
directly by Assumption~\ref{ass:iterate}.

\textit{Boundary term $B_1$.}
Since $\eta_1^{-1} = \eta_0^{-1}$ is deterministic and
$\|M_1\|_{L^2(\mathrm{op})} = \mathcal{O}(1)$ by hypothesis,
\[
    \mathbb{E}\|B_1\|
    \;\le\; \eta_0^{-1}\,\|M_1\|_{L^2(\mathrm{op})}\,\|\Delta_1\|_{L^2}
    \;=\; \mathcal{O}(1),
\]
so $(1/\sqrt{n})\,\mathbb{E}\|B_1\| = \mathcal{O}(n^{-1/2})$.

\textit{Boundary term $B_2$.}
With $\eta_n^{-1} = \eta_0^{-1} n^{\alpha}$ and
$\|\Delta_{n+1}\|_{L^2} \le C_\Delta^{1/2}(n+1)^{-\alpha/2}
\le C_\Delta^{1/2} n^{-\alpha/2}$,
\[
    \mathbb{E}\|B_2\|
    \;\le\; \eta_0^{-1} n^{\alpha}\,
            \|M_n\|_{L^2(\mathrm{op})}\,\|\Delta_{n+1}\|_{L^2}
    \;=\; \mathcal{O}\!\left(n^{\alpha/2}\right),
\]
so $(1/\sqrt{n})\,\mathbb{E}\|B_2\| = \mathcal{O}(n^{(\alpha-1)/2})$.

\textit{Summation term $B_3$.}
By the triangle inequality and Cauchy--Schwarz applied term-by-term,
\[
    \mathbb{E}\|B_3\|
    \;\le\;
    \sum_{t=2}^n
      \bigl\|\eta_t^{-1}M_t - \eta_{t-1}^{-1}M_{t-1}\bigr\|_{L^2(\mathrm{op})}
      \,\|\Delta_t\|_{L^2}.
\]
Substituting the $L^2$ stabilization hypothesis
$\|\eta_t^{-1}M_t - \eta_{t-1}^{-1}M_{t-1}\|_{L^2(\mathrm{op})}
= \mathcal{O}(t^{\alpha-1})$ and the iterate bound
$\|\Delta_t\|_{L^2} = \mathcal{O}(t^{-\alpha/2})$,
\[
    \mathbb{E}\|B_3\|
    \;=\; \sum_{t=2}^n \mathcal{O}\!\left(t^{\alpha/2 - 1}\right).
\]
Since $\alpha \in (1/2,1)$, the integral comparison
$\sum_{t=1}^n t^{\alpha/2 - 1} \le (2/\alpha)\,n^{\alpha/2}$ gives
$\mathbb{E}\|B_3\| = \mathcal{O}(n^{\alpha/2})$, hence,
$(1/\sqrt{n})\,\mathbb{E}\|B_3\| = \mathcal{O}(n^{(\alpha-1)/2})$.

Combining the three contributions in~\eqref{eq:l2_split},
\[
    \mathbb{E}\bigl\|\sqrt{n}\,R_n\bigr\|
    \;=\; \mathcal{O}(n^{-1/2})
        + \mathcal{O}(n^{(\alpha-1)/2})
    \;=\; \mathcal{O}\!\left(n^{(\alpha-1)/2}\right),
\]
which vanishes as $n \to \infty$ since $\alpha < 1$.  Convergence in
$L^1$ implies convergence in probability.
\end{proof}

\subsection{Proof of Proposition~\ref{prop:tight}}
\label{app:proof_prop_tight}
\begin{proof}[Proof of Proposition~\ref{prop:tight}]
With $A_t := \eta_t^{-1}(P_tH)^{-1} = \eta_t^{-1}M_t$, the
decomposition~\eqref{eq:remainder} reads
\[
    n\,R_n(\{P_t\})
    \;=\; A_1\Delta_1 \;-\; A_n\Delta_{n+1}
      \;+\; \sum_{t=2}^n(A_t - A_{t-1})\Delta_t.
\]
Minkowski's inequality gives
\begin{equation}\label{eq:tight_split}
    n\,\|R_n\|_{L^2}
    \;\le\; B_1 + B_2 + B_3,
\end{equation}
where
\[
    B_1 := \|A_1\Delta_1\|_{L^2},
    \qquad
    B_2 := \|A_n\Delta_{n+1}\|_{L^2},
    \qquad
    B_3 := \Bigl\|\sum_{t=2}^n(A_t - A_{t-1})\Delta_t\Bigr\|_{L^2}.
\]
Throughout we use the iterate bound $\|\Delta_t\|_{L^2} \le
C_\Delta^{1/2}\,t^{-\alpha/2}$ from Assumption~\ref{ass:iterate},
together with the almost-sure operator bound
$\|M_t\|_{\mathrm{op}} \le C_P$, which factors out of any $L^2$ norm:
for any random vector~$Y$,
$\bigl\|\|M_t\|_{\mathrm{op}}\|Y\|\bigr\|_{L^2}
\le C_P\,\|Y\|_{L^2}$
by monotonicity of the expectation applied to the a.s.\ inequality
$\|M_t\|_{\mathrm{op}}^2\|Y\|^2 \le C_P^2\|Y\|^2$.

\textit{Boundary term $B_1$.}
Since $\eta_1^{-1}=\eta_0^{-1}$ is deterministic,
\[
    B_1
    \;=\; \eta_0^{-1}\bigl\|\|M_1\|_{\mathrm{op}}\|\Delta_1\|\bigr\|_{L^2}
    \;\le\; \eta_0^{-1}C_P\,\|\Delta_1\|_{L^2}
    \;\le\; \eta_0^{-1}C_P\,C_\Delta^{1/2}
    \;=:\; K_1.
\]

\textit{Boundary term $B_2$.}
With $\eta_n^{-1} = \eta_0^{-1}n^\alpha$ and $\|\Delta_{n+1}\|_{L^2}\le
C_\Delta^{1/2}(n+1)^{-\alpha/2}$,
\[
    B_2
    \;\le\; \eta_0^{-1}n^\alpha\,C_P\,C_\Delta^{1/2}\,(n+1)^{-\alpha/2}
    \;=\; \eta_0^{-1}C_P C_\Delta^{1/2}\cdot n^{\alpha/2}
          \Bigl(\tfrac{n}{n+1}\Bigr)^{\alpha/2}
    \;\le\; K_2\,n^{\alpha/2},
\]
where $K_2 := \eta_0^{-1}C_P C_\Delta^{1/2}$, using the identity
$n^\alpha(n+1)^{-\alpha/2} = n^{\alpha/2}\bigl(n/(n+1)\bigr)^{\alpha/2}
\le n^{\alpha/2}$.

\textit{Summation term $B_3$.}
A second application of Minkowski's inequality commutes the sum out of
the $L^2$ norm:
\[
    B_3
    \;\le\; \sum_{t=2}^n \bigl\|(A_t - A_{t-1})\Delta_t\bigr\|_{L^2}.
\]
We split the sum at the index~$t_0$.  For $2\le t < t_0$, the triangle
inequality and the uniform bound $\|M_t\|_{\mathrm{op}}\le C_P$ give the
a.s.\ bound
\[
    \|A_t - A_{t-1}\|_{\mathrm{op}}
    \;\le\; \eta_t^{-1}C_P + \eta_{t-1}^{-1}C_P
    \;\le\; 2\eta_0^{-1}C_P\,t_0^\alpha,
\]
since $\eta_t^{-1}=\eta_0^{-1}t^\alpha\le\eta_0^{-1}t_0^\alpha$ on this
range.  Combined with $\|\Delta_t\|_{L^2}\le C_\Delta^{1/2}$ for
$t\ge 1$, summing over the at most $t_0-2$ indices gives
\[
    \sum_{t=2}^{t_0-1}\bigl\|(A_t - A_{t-1})\Delta_t\bigr\|_{L^2}
    \;\le\; 2(t_0-2)\,\eta_0^{-1}C_P C_\Delta^{1/2}\,t_0^\alpha
    \;=:\; K_0.
\]
For $t\ge t_0$, the pathwise coupled-difference hypothesis
$\|A_t - A_{t-1}\|_{\mathrm{op}}\le C_{\mathrm{coup}}\,t^{\alpha-1}$
a.s.\ and the iterate bound give
\[
    \sum_{t=t_0}^n\bigl\|(A_t - A_{t-1})\Delta_t\bigr\|_{L^2}
    \;\le\; C_{\mathrm{coup}}\,C_\Delta^{1/2}
            \sum_{t=t_0}^n t^{\alpha/2-1}.
\]
Since $\alpha\in(1/2,1)$, the map $x\mapsto x^{\alpha/2-1}$ is positive
and decreasing on $(0,\infty)$, so an upper Riemann sum comparison
yields
\[
    \sum_{t=1}^n t^{\alpha/2-1}
    \;\le\; \int_0^n x^{\alpha/2-1}\,dx
    \;=\; \tfrac{2}{\alpha}\,n^{\alpha/2}
\]
(the integral converges at~$0$ since $\alpha>0$), and, hence,
\[
    \sum_{t=t_0}^n\bigl\|(A_t - A_{t-1})\Delta_t\bigr\|_{L^2}
    \;\le\; K_3\,n^{\alpha/2},
    \qquad
    K_3 := \tfrac{2}{\alpha}\,C_{\mathrm{coup}}\,C_\Delta^{1/2}.
\]
Combining the two ranges, $B_3 \le K_0 + K_3\,n^{\alpha/2}$.

\textit{Combining contributions.}
Substituting the three bounds into~\eqref{eq:tight_split} and using
$n^{\alpha/2}\ge 1$ to absorb the $n$-independent constants into the
leading term,
\[
    n\,\|R_n\|_{L^2}
    \;\le\; (K_0 + K_1) + (K_2 + K_3)\,n^{\alpha/2}
    \;\le\; (K_0+K_1+K_2+K_3)\,n^{\alpha/2},
\]
so $\|R_n\|_{L^2}\le(K_0+K_1+K_2+K_3)\,n^{\alpha/2-1}$.  Squaring
yields the finite-sample bound
\[
    \mathbb{E}\|R_n\|^2
    \;\le\; C_R\,n^{\alpha-2},
    \qquad
    C_R \;:=\; (K_0+K_1+K_2+K_3)^2,
\]
which depends explicitly on $\eta_0,\alpha,C_\Delta,C_P,
C_{\mathrm{coup}},t_0$ through the definitions of $K_0,\dots,K_3$.
\end{proof}

\subsection{Proof of Proposition~\ref{prop:comparison}}
\label{app:proof_prop_comparison}

The proof invokes the following pathwise version of the classical
Toeplitz lemma; we record it explicitly since we apply it on individual
sample paths rather than in mean.

\begin{lemma}[Pathwise Toeplitz lemma]
\label{lem:pathwise_toeplitz}
Let $\{a_t\}_{t\ge 1}$ be a (possibly random) scalar sequence with
$a_t \to a$ almost surely, and let
$\{w_{t,n} : 1 \le t \le n,\, n \ge 1\}$ be a deterministic triangular
array of nonnegative weights satisfying
\[
    \sum_{t=1}^n w_{t,n} \;\to\; 1
    \qquad\text{and}\qquad
    \max_{1\le t\le n} w_{t,n} \;\to\; 0
    \qquad \text{as } n \to \infty.
\]
Then $\sum_{t=1}^n w_{t,n}\, a_t \to a$ almost surely.
\end{lemma}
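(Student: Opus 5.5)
The plan is the standard $\varepsilon$--$N$ splitting argument for Toeplitz-type averages, run on a fixed sample path. Let $\Omega_0$ be the probability-one event on which $a_t(\omega)\to a(\omega)$ (allowing $a$ to be random, or constant as in the intended application). Since the weights $w_{t,n}$ are deterministic, the statement will follow once I show that, for every $\omega\in\Omega_0$, the deterministic sequence $\sum_t w_{t,n}a_t(\omega)$ converges to $a(\omega)$. No measure-theoretic issue arises beyond the measurability of $\Omega_0$, which is given by the almost-sure hypothesis.

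Fix $\omega\in\Omega_0$ and write
\[
\sum_{t=1}^n w_{t,n}a_t - a
= \sum_{t=1}^n w_{t,n}(a_t-a) + a\Bigl(\sum_{t=1}^n w_{t,n}-1\Bigr).
\]
The second term vanishes because $\sum_t w_{t,n}\to1$. For the first term, I fix $\varepsilon>0$ and choose $N=N(\omega,\varepsilon)$ with $|a_t-a|\le\varepsilon$ for all $t>N$. Then I split the sum at $N$.

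The tail is bounded by $\varepsilon\sum_{t=1}^n w_{t,n}$, which uses nonnegativity of the weights. Its $\limsup$ is therefore at most $\varepsilon$.

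The head $\sum_{t\le N}w_{t,n}|a_t-a|$ is a sum of a fixed, $n$-independent number $N$ of terms. Each is bounded by $K_\omega\max_{t\le n}w_{t,n}$, where $K_\omega:=\max_{t\le N}|a_t(\omega)-a(\omega)|<\infty$. So the head is at most $N K_\omega\max_t w_{t,n}\to0$.

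Letting $n\to\infty$ and then $\varepsilon\downarrow0$ gives the claim on $\Omega_0$.

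The only point needing care is that $N$ and $K_\omega$ depend on the path. This is harmless because the head bound needs only a fixed finite number of terms for each $\omega$, together with the deterministic condition $\max_t w_{t,n}\to0$; no uniformity across $\omega$ is required. I also need to make sure that nonnegativity is used wherever absolute values are pulled through the weights. If $a$ is random, the same argument applies verbatim, since $a(\omega)$ is simply a number on each path.
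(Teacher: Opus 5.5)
Your proposal is correct and matches the paper's proof: both fix $\omega$ in the probability-one convergence event and run the classical deterministic Toeplitz $\varepsilon$-argument, using $\sum_t w_{t,n}\to 1$ on the tail and $\max_t w_{t,n}\to 0$ on the finite head. Your version spells out the short argument that the paper only sketches, and it is sound.
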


\begin{proof}
The classical deterministic Toeplitz lemma states that for any
deterministic sequence $b_t \to b$, the weighted averages
$\sum_{t=1}^n w_{t,n} b_t$ converge to $b$ under the two displayed
conditions on the weights (short $\varepsilon$-argument: split the sum
at a tail index beyond which $|b_t - b| < \varepsilon$, then use
$\sum w_{t,n}\to 1$ on the tail and $\max w_{t,n}\to 0$ on the finite
head).  Fix $\omega$ in the probability-one event on which $a_t \to a$;
applying the deterministic statement sample-path-wise yields the
claim.  The exceptional null set is independent of the (deterministic)
weights, so no measurability subtlety arises.
\end{proof}

\begin{proof}[Proof of Proposition~\ref{prop:comparison}]
We establish $M_t \to M_\infty$ a.s.\ for each construction, then derive
the operator-factor expressions.

\textit{SA-ONS.}
We identify the SA recursion in Theorem~\ref{thm:sa_stab} by setting
$Q_t = B_t$, $W_t = \hat{H}_t$, and $\varphi(A) = A^{-1}$.  The
driving term satisfies $W_t \succeq h_- I$ a.s.\ by the two-sided
spectral bound (Theorem~\ref{thm:sa_stab}'s generic ellipticity floor
$\epsilon$ is identified here with $h_-$),
$\sup_t\|W_t\|_{\mathrm{op}}\le h_+$ a.s., and the conditional-mean
identity $\mathbb{E}[\hat{H}_t\mid\mathcal{F}_{t-1}] = H$ holds by
hypothesis.  The final clause of Theorem~\ref{thm:sa_stab} applied with
$\bar W = H$, therefore, gives $B_t \to H$ and
$M_t = H^{-1}B_{t-1} \to I$ a.s.

\textit{Preliminary ($\Delta_t \to 0$ for SA-AdaGrad and SA-RMSProp).}
For both SA-AdaGrad and SA-RMSProp, the conditional-mean hypothesis of
Theorem~\ref{thm:sa_stab} fails---e.g., for SA-AdaGrad
$\mathbb{E}[W_t \mid \mathcal{F}_{t-1}]
 = \nabla F(x_t)\nabla F(x_t)^\top + \Sigma(x_t) + \epsilon I$ with
$\Sigma(x_t) := \mathbb{E}[\xi_t\xi_t^\top\mid\mathcal{F}_{t-1}]$,
which is not a deterministic constant---so we give a direct pathwise
argument.  We first establish $\Delta_t \to 0$ almost surely by a
Robbins--Siegmund analysis on the objective gap
$V_t := F(x_t) - F(x^*) \ge 0$.  Both constructions satisfy the same
hypotheses~(a)--(b) of Corollary~\ref{cor:adaptive_iterate} and the
same pathwise uniform ellipticity
$p_- I \preceq P_t \preceq p_+ I$ a.s.\ with
$p_- := 1/\sqrt{G^2+\epsilon}$ and $p_+ := 1/\sqrt{\epsilon}$
(Corollary~\ref{cor:adaptive_iterate}(i)), so the argument below
applies verbatim to both.

The preconditioned SGD update $x_{t+1} = x_t - \eta_t P_t g_t$ combined
with $L$-smoothness of $F$ on $\mathcal{K}$
(Corollary~\ref{cor:adaptive_iterate}(b)) yields the descent inequality
\[
    F(x_{t+1}) - F(x_t)
    \;\le\;
    -\eta_t\bigl\langle\nabla F(x_t),P_t g_t\bigr\rangle
    + \tfrac{L\,\eta_t^2}{2}\,\|P_t g_t\|^2.
\]
Take $\mathcal{F}_{t-1}$-conditional expectation.
Using the unbiasedness relation
$\mathbb{E}[g_t\mid\mathcal{F}_{t-1}] = \nabla F(x_t)$,
the $\mathcal{F}_{t-1}$-measurability of $P_t$, and $\|g_t\|\le G$,
\[
    \mathbb{E}[V_{t+1}\mid\mathcal{F}_{t-1}]
    \;\le\;
    V_t - \eta_t\bigl\langle\nabla F(x_t), P_t\nabla F(x_t)\bigr\rangle
    + \tfrac{1}{2}L\,p_+^2\,G^2\,\eta_t^2.
\]
Since $P_t\succeq p_-I$ gives
$\langle\nabla F(x_t),P_t\nabla F(x_t)\rangle\ge p_-\|\nabla F(x_t)\|^2$,
and the Polyak--{\L}ojasiewicz consequence of $\mu$-strong convexity
(Assumption~\ref{ass:convex}) gives
$\|\nabla F(x_t)\|^2\ge 2\mu V_t$, we obtain
\[
    \mathbb{E}[V_{t+1}\mid\mathcal{F}_{t-1}]
    \;\le\;
    (1 - 2p_-\mu\,\eta_t)\,V_t
    + \tfrac{1}{2}L\,p_+^2\,G^2\,\eta_t^2.
\]
With $\eta_t = \eta_0 t^{-\alpha}$ and $\alpha\in(1/2,1)$ we have
$\sum_t\eta_t^2 < \infty$ and $\sum_t\eta_t = \infty$, so the
Robbins--Siegmund theorem
\citep[Theorem~1]{robbins1971convergence}, applied with $\alpha_t\equiv 0$,
$\beta_t := 2p_-\mu\,\eta_t V_t$, and
$\gamma_t := \tfrac{1}{2}L\,p_+^2\,G^2\,\eta_t^2$, yields both the
a.s.\ convergence of $V_t$ to a random limit $V_\infty\ge 0$ and the
pathwise summability
$\sum_t\eta_t V_t < \infty$ a.s.  On the event
$\{V_\infty > 0\}$ one would have $\eta_t V_t \sim \eta_t V_\infty$ for
large $t$, so $\sum_t \eta_t V_t = \infty$ on that event, contradicting
the summability; hence, $V_\infty = 0$ a.s.  By strong convexity
$V_t\ge(\mu/2)\|\Delta_t\|^2$, so
\begin{equation}\label{eq:rs_prelim}
    \Delta_t \to 0 \quad \text{almost surely (SA-AdaGrad and SA-RMSProp).}
\end{equation}

\textit{SA-AdaGrad.}
Set $\bar W := S + \epsilon I$ and decompose $W_t = \bar W + Z_t + b_t$,
where
\[
    Z_t := W_t - \mathbb{E}[W_t \mid \mathcal{F}_{t-1}],
    \qquad
    b_t := \mathbb{E}[W_t \mid \mathcal{F}_{t-1}] - \bar W
         = \nabla F(x_t)\nabla F(x_t)^\top + (\Sigma(x_t) - S).
\]
We record two properties.
\begin{enumerate}
  \item[(a)] \emph{Bounded martingale differences.}
    $\{Z_t\}$ is an $\mathcal{F}_t$-martingale difference sequence with
    $\|Z_t\|_{\mathrm{op}} \le 2(G^2 + \epsilon)$ a.s., since
    $\|W_t\|_{\mathrm{op}} \le G^2 + \epsilon$ under $\|g_t\| \le G$.
  \item[(b)] \emph{Vanishing bias.}  By the local expansion
    $\|\nabla F(x_t)\| \le \|H\|_{\mathrm{op}}\|\Delta_t\|
    + L_R\|\Delta_t\|^2$ and \eqref{eq:rs_prelim},
    $\|\nabla F(x_t)\nabla F(x_t)^\top\|_{\mathrm{op}} \to 0$ a.s.;
    by continuity of $\Sigma(\cdot)$ at~$x^*$,
    $\|\Sigma(x_t) - S\|_{\mathrm{op}} \to 0$ a.s.  Hence,
    $\|b_t\|_{\mathrm{op}} \to 0$ almost surely.
\end{enumerate}

\textit{Telescoping identity (case $c=1$).}
With $\rho_t = 1/t$ and $C_0 = \epsilon I$, the recursion~\eqref{eq:sa_general_stab} rewrites as
$t\,C_t = (t-1)\,C_{t-1} + W_t$; induction on $t$ gives the exact identity
\[
    C_n
    \;=\; \frac{1}{n}\sum_{t=1}^n W_t
    \;=\; \bar W \;+\; \frac{1}{n}\sum_{t=1}^n Z_t
    \;+\; \frac{1}{n}\sum_{t=1}^n b_t
    \qquad (n \ge 1).
\]

\emph{Martingale term.}  Fix entry indices $(i,j)$ and set
$y_t := (Z_t)_{ij}/t$.  By~(a), $\{y_t\}$ is a scalar martingale
difference sequence with
$\sum_{t=1}^\infty \mathbb{E}\,y_t^2
\le 4(G^2+\epsilon)^2 \sum_{t=1}^\infty t^{-2} < \infty$,
so Doob's $L^2$ martingale convergence theorem gives a.s.\ convergence
of $\sum_{t=1}^\infty y_t$.  Kronecker's lemma then yields
$n^{-1}\sum_{t=1}^n (Z_t)_{ij} \to 0$ a.s.  Since $d$ is finite and
operator and entrywise norms are equivalent,
$\|n^{-1}\sum_{t=1}^n Z_t\|_{\mathrm{op}} \to 0$ a.s.

\emph{Bias term.}  By~(b), $\|b_t\|_{\mathrm{op}} \to 0$ a.s., and the
Ces\`aro lemma gives
$\|n^{-1}\sum_{t=1}^n b_t\|_{\mathrm{op}} \to 0$ a.s.

Combining, $C_n \to \bar W = S + \epsilon I$ almost surely.

\textit{General $c \in (0,1]$.}
For general $c$, induction on the recursion gives the closed form
\[
    C_n \;=\; \pi_{0,n}\,C_0
    \;+\; \sum_{t=1}^n w_{t,n}\,W_t,
    \qquad
    w_{t,n} := \frac{c\,\pi_{t,n}}{t},
    \quad
    \pi_{s,n} := \prod_{r=s+1}^n\!\bigl(1 - \tfrac{c}{r}\bigr),
\]
with the convention $\pi_{n,n} = 1$ (empty product).  From
$\log(1 - c/r) = -c/r + \mathcal{O}(r^{-2})$ and harmonic-sum
telescoping,
\[
    \log \pi_{t,n}
    \;=\; \sum_{r=t+1}^n \log(1 - c/r)
    \;=\; -c\,\log(n/t) + \mathcal{O}(1)
    \quad \text{uniformly in } 1\le t\le n,
\]
so $\pi_{t,n}\asymp (t/n)^c$; in particular $\pi_{0,n}\to 0$, and the
weights $\{w_{t,n}\}$ satisfy the Toeplitz row-sum conditions
\[
    \sum_{t=1}^n w_{t,n} \;\to\; 1,
    \qquad
    \max_{1\le t\le n} w_{t,n} \;\to\; 0.
\]
Note the exact partition-of-unity identity
\[
    \pi_{0,n} \;+\; \sum_{t=1}^n w_{t,n} \;=\; 1
    \qquad \text{for every } n\ge 1,
\]
obtained by specializing the closed form to the constant input
$W_t\equiv I$ with $C_0 = I$ (in which case $C_n\equiv I$).  Inserting
the decomposition $W_t = \bar W + Z_t + b_t$ into the closed form and
collapsing the deterministic contributions via this identity,
\[
    C_n - \bar W
    \;=\; \pi_{0,n}(C_0 - \bar W)
    \;+\; \sum_{t=1}^n w_{t,n}\,Z_t
    \;+\; \sum_{t=1}^n w_{t,n}\,b_t;
\]
the first (deterministic) term vanishes as $n\to\infty$ since
$\pi_{0,n}\to 0$.

\emph{Bias term.}  By (b), $\|b_t\|_{\mathrm{op}} \to 0$ a.s., and
Lemma~\ref{lem:pathwise_toeplitz} applied entrywise (using the Toeplitz
row-sum conditions on $\{w_{t,n}\}$ displayed above) yields
$\|\sum_{t=1}^n w_{t,n}\,b_t\|_{\mathrm{op}}\to 0$ a.s.

\emph{Martingale term.}  Fix entry indices $(i,j)$ and set
$\zeta_t := (Z_t)_{ij}$; by (a), $\{\zeta_t, \mathcal{F}_t\}$ is a
bounded scalar martingale-difference sequence with
$|\zeta_t|\le 2(G^2+\epsilon)$ a.s.  Define the auxiliary scalar
martingale
\[
    N_t \;:=\; \sum_{s=1}^t \frac{\zeta_s}{s},
    \qquad N_0 := 0.
\]
Its predictable quadratic variation is bounded:
$\sum_{s=1}^\infty \mathbb{E}(\zeta_s/s)^2
\le 4(G^2+\epsilon)^2\sum_{s=1}^\infty s^{-2} < \infty$, so Doob's $L^2$
martingale convergence theorem gives
$N_t \to N_\infty$ a.s.\ to a finite $\mathcal{F}_\infty$-measurable
limit.  From $\pi_{t,n} = (1 - c/(t+1))\,\pi_{t+1,n}$ we read off the
forward difference
\[
    \pi_{t+1,n} - \pi_{t,n} \;=\; \frac{c\,\pi_{t+1,n}}{t+1}.
\]
Using the identity $\zeta_t/t = N_t - N_{t-1}$ and Abel's summation by
parts,
\begin{align*}
    \sum_{t=1}^n w_{t,n}\,\zeta_t
    &\;=\; c\sum_{t=1}^n \pi_{t,n}\,(N_t - N_{t-1}) \\
    &\;=\; c\,\pi_{n,n}\,N_n
         \;-\; c\sum_{t=1}^{n-1} N_t\,(\pi_{t+1,n} - \pi_{t,n}) \\
    &\;=\; c\,N_n
         \;-\; c\sum_{s=2}^n w_{s,n}\,N_{s-1},
\end{align*}
where the last equality uses $\pi_{n,n}=1$, the forward-difference
identity above, and the reindexing $s=t+1$.  Since $N_n\to N_\infty$
a.s., the boundary term satisfies $c N_n\to c N_\infty$ a.s.  For the
weighted sum, the sequence $N_{s-1}$ (with $N_0 := 0$) converges a.s.\
to $N_\infty$, and the weights $\{w_{s,n}\}$ satisfy the Toeplitz
conditions displayed above, so Lemma~\ref{lem:pathwise_toeplitz} gives
$\sum_{s=1}^n w_{s,n}\,N_{s-1} \to N_\infty$ a.s.\ (the $s=1$ contribution
is $w_{1,n}\cdot 0 = 0$, so the sum from $s=2$ has the same limit).
Therefore,
\[
    \sum_{t=1}^n w_{t,n}\,\zeta_t
    \;\longrightarrow\;
    c\,N_\infty - c\,N_\infty
    \;=\; 0
    \qquad \text{almost surely.}
\]
Since $d$ is finite and the operator norm is equivalent to the maximum
entrywise magnitude (up to a factor depending only on $d$),
$\|\sum_{t=1}^n w_{t,n}\,Z_t\|_{\mathrm{op}}\to 0$ a.s.

Combining the three terms, $C_n \to \bar W = S + \epsilon I$ almost
surely for every $c\in(0,1]$.

\textit{SA-RMSProp.}
Using the preliminary \eqref{eq:rs_prelim} already established for both
SA-AdaGrad and SA-RMSProp, the same decomposition and telescoping
argument applies coordinatewise to the diagonal recursion
$v_t = (1-\rho_t)\,v_{t-1} + \rho_t(g_t\odot g_t + \epsilon\mathbf{1})$,
with $\bar w_i := (s_{\mathrm{diag}} + \epsilon\mathbf{1})_i$ in the
$i$-th coordinate and scalar analogues of properties (a)--(b): the
scalar martingale differences $(Z_t)_i$ are bounded by $2(G^2+\epsilon)$
under $\|g_t\|\le G$, and the scalar bias $(b_t)_i\to 0$ a.s.\ by the
same local-expansion and continuity argument.  Doob's $L^2$
convergence, Kronecker's lemma (for $c=1$), and Abel summation with
Lemma~\ref{lem:pathwise_toeplitz} (for general $c\in(0,1]$) yield
$\mathrm{Diag}(v_t)
\to \mathrm{Diag}(\mathrm{diag}(S) + \epsilon\mathbf{1})$ a.s.

\textit{Passage to $M_t$.}
The map $\varphi : A \mapsto A^{-1/2}$ is continuous on
$\{A \in \mathbb{S}^d_{++} : A \succeq \epsilon I\}$ (indeed locally
Lipschitz by the spectral calculus of a smooth function), so the
continuous mapping theorem applied pathwise gives
$\varphi(C_{t-1}) \to \varphi(\bar W) = (S+\epsilon I)^{-1/2}$ a.s.,
and, hence,
\[
    M_t \;=\; \bigl(\varphi(C_{t-1})\,H\bigr)^{-1}
    \;\to\; H^{-1}(S+\epsilon I)^{1/2} \qquad \text{a.s.}
\]
The diagonal analogue holds for SA-RMSProp.

\textit{Operator-factor expressions.}
Under the full hypotheses~(a)--(b) of the respective primitive
corollary assumed in each branch, the pathwise uniform ellipticity of
$P_t$ supplied by Corollaries~\ref{cor:adaptive_iterate}
and~\ref{cor:ons_iterate} yields the pathwise operator bound
$\sup_t\|M_t\|_{\mathrm{op}} \le C_P$ a.s.\ (this is the constant~$C_P$
appearing in the hypothesis of Proposition~\ref{prop:tight}), and
Theorem~\ref{thm:sa_stab}(iv) combined with
Corollary~\ref{cor:threshold} yields the pathwise coupled one-step
variation
$\|\eta_t^{-1}M_t - \eta_{t-1}^{-1}M_{t-1}\|_{\mathrm{op}}
= \mathcal{O}(t^{\alpha-1})$ a.s.  The pathwise hypotheses of
Proposition~\ref{prop:tight} are, therefore, met.  Since $M_t \to M_\infty$
almost surely with the limits identified above, the \emph{asymptotic
operator factor}
$\bar C_P := \|M_\infty\|_{\mathrm{op}}$---the $t\to\infty$ limit of
$\|M_t\|_{\mathrm{op}}$, whose pathwise uniform bound supplies the
constant $C_P$ in Proposition~\ref{prop:tight}---is identified.  Substituting
$M_\infty$ for each construction:
SA-ONS gives $M_\infty = I$, so
$\bar C_P^{\mathrm{ONS}} = 1$;
SA-AdaGrad gives
$M_\infty = H^{-1}(S+\epsilon I)^{1/2}$;
SA-RMSProp gives
$M_\infty = H^{-1}\mathrm{Diag}(\mathrm{diag}(S)+\epsilon\mathbf{1})^{1/2}$.
The ratio identity
$\bar C_P^{\mathrm{ONS}} / \bar C_P^{\mathrm{Id}} = \lambda_{\min}(H)$
is immediate from the definitions.
\end{proof}

\subsection{Proof of Theorem~\ref{thm:clt}}
\label{app:proof_thm_clt}
\begin{proof}[Proof of Theorem~\ref{thm:clt}]
By Lemma~\ref{lem:decomp}:
$\sqrt{n}\,\overline{\Delta}_n = \widetilde{\Xi}_n + \sqrt{n}\,T_n
+ \sqrt{n}\,R_n(\{P_t\})$, where
$\widetilde{\Xi}_n := -\frac{1}{\sqrt{n}}H^{-1}\sum_{t=1}^n \xi_t$ and
$T_n := -\frac{1}{n}H^{-1}\sum_{t=1}^n u_t$.

We show $\widetilde{\Xi}_n \xrightarrow{d} \mathcal{N}(0, H^{-1}SH^{-1})$.
Define the triangular-array increments
\[
    Z_{n,t} := -\frac{1}{\sqrt{n}}H^{-1}\xi_t,
    \qquad 1 \le t \le n,
\]
so that $\widetilde{\Xi}_n = \sum_{t=1}^n Z_{n,t}$.
Since $x_t$ is $\mathcal{F}_{t-1}$-measurable and $\zeta_t$ is independent of
$\mathcal{F}_{t-1}$, Assumption~\ref{ass:mg} gives
\[
    \mathbb{E}[Z_{n,t}\mid\mathcal{F}_{t-1}] = 0,
    \qquad
    \mathbb{E}[\xi_t\xi_t^\top\mid\mathcal{F}_{t-1}]
    = \mathbb{E}[\xi(x,\zeta_t)\xi(x,\zeta_t)^\top]\big|_{x=x_t}
    = \Sigma(x_t),
\]
so $\{Z_{n,t},\mathcal{F}_t\}_{1 \le t \le n}$ is a martingale-difference
array.  Its predictable quadratic variation is, therefore,
\[
    V_n
    :=
    \sum_{t=1}^n
    \mathbb{E}[Z_{n,t}Z_{n,t}^\top \mid \mathcal{F}_{t-1}]
    =
    H^{-1}\left(
        \frac{1}{n}\sum_{t=1}^n \Sigma(x_t)
    \right)H^{-1}.
\]
By Assumption~\ref{ass:iterate}, $\mathbb{E}\|\Delta_t\|^2 \to 0$, hence,
$x_t \to x^*$ in probability.  The continuity of $\Sigma(\cdot)$ at $x^*$,
therefore, implies $\Sigma(x_t) \to S$ in probability.  Moreover,
\[
    \|\Sigma(x_t)\|_{\mathrm{op}}
    \le
    \mathbb{E}\!\left[\|\xi_t\|^2 \mid \mathcal{F}_{t-1}\right],
\]
and Jensen's inequality gives
\[
    \sup_{t \ge 1}
    \mathbb{E}\!\left[
        \mathbb{E}\!\left(\|\xi_t\|^2 \mid \mathcal{F}_{t-1}\right)^{1+\delta/2}
    \right]
    \le
    \sup_{t \ge 1}\mathbb{E}\|\xi_t\|^{2+\delta}
    < \infty.
\]
Thus, $\{\Sigma(x_t)\}_{t\ge1}$ is uniformly integrable, so
$\Sigma(x_t) \to S$ in $L^1$, and Ces\`aro averaging yields
\[
    \frac{1}{n}\sum_{t=1}^n \Sigma(x_t)
    \xrightarrow{L^1}
    S.
\]
Consequently,
\[
    V_n \xrightarrow{L^1} H^{-1}SH^{-1}.
\]

It remains to verify the Lindeberg condition.  For any $\varepsilon > 0$, let
\[
    L_n(\varepsilon)
    :=
    \sum_{t=1}^n
    \mathbb{E}\!\left[
        \|Z_{n,t}\|^2
        \mathbf{1}_{\{\|Z_{n,t}\|>\varepsilon\}}
        \,\middle|\, \mathcal{F}_{t-1}
    \right].
\]
Using $\|Z_{n,t}\| = n^{-1/2}\|H^{-1}\xi_t\|$ and the inequality
$a^2\mathbf{1}_{\{a>b\}} \le a^{2+\delta}b^{-\delta}$, we obtain
\[
    \mathbb{E}[L_n(\varepsilon)]
    \le
    \frac{\|H^{-1}\|_{\mathrm{op}}^{2+\delta}}
         {\varepsilon^\delta n^{1+\delta/2}}
    \sum_{t=1}^n \mathbb{E}\|\xi_t\|^{2+\delta}
    \le
    \frac{C}{n^{\delta/2}}
    \xrightarrow[n\to\infty]{} 0
\]
for a finite constant $C$.  Hence, $L_n(\varepsilon) \to 0$ in $L^1$ and,
therefore, in probability.  Applying the scalar martingale central
limit theorem of Hall and Heyde \citep[Theorem~3.2]{hall1980martingale}
to the scalar martingale-difference array $\{\theta^\top Z_{n,t}\}$ for
each fixed $\theta\in\mathbb{R}^d$---whose predictable quadratic
variation $\theta^\top V_n\theta$ converges in $L^1$ to
$\theta^\top H^{-1}SH^{-1}\theta$ and whose Lindeberg condition is
inherited from that of $\{Z_{n,t}\}$ via
$|\theta^\top Z_{n,t}|\le\|\theta\|\,\|Z_{n,t}\|$, so that
$\sum_{t=1}^n
\mathbb{E}[|\theta^\top Z_{n,t}|^2
\mathbf{1}_{\{|\theta^\top Z_{n,t}|>\varepsilon\}}
\mid\mathcal{F}_{t-1}]
\le\|\theta\|^2\,L_n(\varepsilon/\|\theta\|)\to 0$---yields
$\theta^\top\widetilde\Xi_n\xrightarrow{d}
\mathcal{N}(0,\theta^\top H^{-1}SH^{-1}\theta)$.  The Cram\'er--Wold
device then gives
\[
    \widetilde{\Xi}_n = \sum_{t=1}^n Z_{n,t}
    \xrightarrow{d}
    \mathcal{N}(0,H^{-1}SH^{-1}).
\]

For the Taylor remainder,
Proposition~\ref{prop:taylor} gives
$\|\sqrt{n}\,T_n\|_{L^1} = \mathcal{O}(n^{1/2-\alpha}) \to 0$,
since $\alpha > 1/2$.  Hence, $\sqrt{n}\,T_n \xrightarrow{p} 0$.

For the dynamic remainder, $\sqrt{n}\,R_n \to 0$ in probability
holds by hypothesis.  (In the default tier, the $L^2(\mathrm{op})$
route of Proposition~\ref{prop:l2_remainder} gives
$\mathbb{E}\|\sqrt{n}\,R_n\| = \mathcal{O}(n^{(\alpha-1)/2})\to 0$,
i.e.\ convergence in $L^1$ and, hence, in probability; alternatively,
in the refinement tier, under the pathwise hypotheses of
Proposition~\ref{prop:tight}, one has
$\mathbb{E}\|\sqrt{n}\,R_n\|^2 = n \cdot \mathbb{E}\|R_n\|^2
\leq C_R\, n^{\alpha-1} \to 0$, giving convergence in $L^2$ and,
hence, in probability.)

Combining the three terms via Slutsky's theorem:
since $\widetilde{\Xi}_n \xrightarrow{d} \mathcal{N}(0, H^{-1}SH^{-1})$ and
$\sqrt{n}\,T_n \xrightarrow{p} 0$ and $\sqrt{n}\,R_n \xrightarrow{p} 0$,
Slutsky's theorem gives that
$\widetilde{\Xi}_n + \sqrt{n}\,T_n + \sqrt{n}\,R_n$
converges in distribution to $\mathcal{N}(0, H^{-1}SH^{-1})$.
\end{proof}

\subsection{Proof of Corollary~\ref{cor:gauss_approx}}
\label{app:proof_thm_gauss_approx}
\begin{proof}[Proof of Corollary~\ref{cor:gauss_approx}]
By Lemma~\ref{lem:decomp},
\[
  W_n
  :=
  \sqrt{n}\,\overline{\Delta}_n
  =
  \widetilde{\Xi}_n + E_n + D_n,
\]
where $\widetilde{\Xi}_n := -\frac{1}{\sqrt{n}}H^{-1}\sum_{t=1}^n \xi_t$
is the rescaled martingale term from the theorem statement,
\[
  E_n := \sqrt{n}\,T_n,
  \qquad
  D_n := \sqrt{n}\,R_n.
\]
Let $Z \sim \mathcal{N}(0,V)$.

By the dual representation~\eqref{eq:wasserstein_dist}, for any
1-Lipschitz function $h$,
\begin{align*}
  \left|\mathbb{E}h(W_n) - \mathbb{E}h(Z)\right|
  &\le
  \left|\mathbb{E}h(\widetilde{\Xi}_n) - \mathbb{E}h(Z)\right|
  +
  \left|\mathbb{E}h(\widetilde{\Xi}_n + E_n + D_n)
        - \mathbb{E}h(\widetilde{\Xi}_n)\right| \\
  &\le
  \Delta_{\mathrm{mart},W}(n)
  +
  \mathbb{E}\|E_n + D_n\| \\
  &\le
  \Delta_{\mathrm{mart},W}(n)
  +
  \mathbb{E}\|E_n\|
  +
  \mathbb{E}\|D_n\|.
\end{align*}
Taking the supremum over all such $h$ gives
\[
  d_W\!\left(
    \mathcal{L}(W_n),
    \Phi_V
  \right)
  \le
  \Delta_{\mathrm{mart},W}(n)
  +
  \mathbb{E}\|E_n\|
  +
  \mathbb{E}\|D_n\|.
\]

For the Taylor remainder, Proposition~\ref{prop:taylor} gives
\[
  \mathbb{E}\|E_n\|
  =
  \sqrt{n}\,\mathbb{E}\|T_n\|
  =
  \mathcal{O}(n^{1/2-\alpha}).
\]

For the dynamic remainder, by Cauchy--Schwarz and the assumed bound
$\mathbb{E}\|R_n\|^2 \le C_R n^{\alpha-2}$,
\[
  \mathbb{E}\|D_n\|
  \le
  \left(\mathbb{E}\|D_n\|^2\right)^{1/2}
  =
  \left(n\,\mathbb{E}\|R_n\|^2\right)^{1/2}
  =
  \mathcal{O}(n^{(\alpha-1)/2}).
\]

Substituting the Taylor and dynamic remainder bounds into the Wasserstein
comparison yields~\eqref{eq:gauss_approx}.
\end{proof}

\section{Additional Experiments Diagnostics}
\label{sec:experiments}

This supplement collects additional experiments diagnostics that complement
the main text.  Unless otherwise noted, the experiments use the same setup
described in Section~\ref{sec:simulation}.

\subsection*{CLT Convergence Diagnostics}

For a finer look at the CLT convergence mechanism, we examine the error
decomposition from Lemma~\ref{lem:decomp}.  Since the synthetic objective
is exactly quadratic (linear regression, $\lambda = 0$), the Taylor
remainder vanishes ($T_n = 0$) and the decomposition simplifies to
\[
    \overline{x}_n - x^*
    = -\frac{1}{n}\,H^{-1} \sum_{t=1}^{n} \xi_t + R_n,
\]
where $\xi_t = g_t - H(x_t - x^*)$ is a martingale difference and $R_n$
is the remainder driven by the non-stationary dynamic preconditioner.
(The implementation includes a safety gradient clip at $\|g_t\| = 500$.
The diagnostic quantities plotted below are computed from the clipped
iterates; the theoretical decomposition above applies exactly to the
unclipped recursion, but the two coincide whenever clipping is inactive.)
The CLT $\sqrt{n}(\overline{x}_n - x^*) \xrightarrow{d}
\mathcal{N}(0,\, H^{-1} S H^{-1})$ requires $\sqrt{n}\,R_n \to 0$ in
probability, which Theorem~\ref{thm:sharp_threshold} guarantees when the
stabilization rate exceeds the threshold; we examine this condition both
visually (Figure~\ref{fig:diagnostics}) and numerically via the empirical
log--log slope of $\sqrt{n}\,\|R_n\|$ versus~$n$
(Table~\ref{tab:scaled_slopes}).

\begin{figure}[tbp]
\centering
\includegraphics[width=\textwidth]{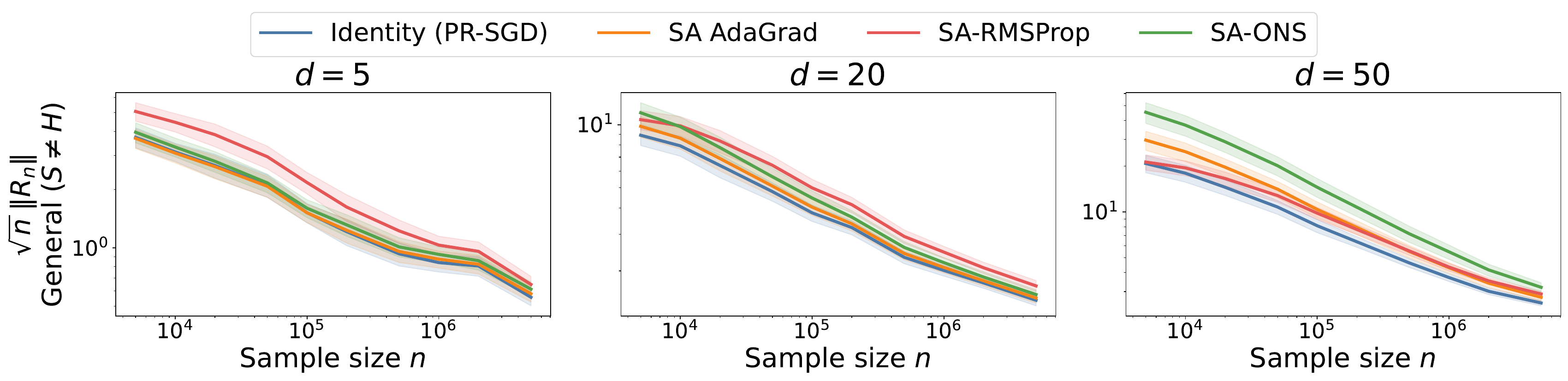}
\caption{CLT convergence diagnostics in the general regime
  ($S \neq H$).  Each curve corresponds to one of the four methods
  (Identity, SA-AdaGrad, SA-RMSProp, SA-ONS).  Columns correspond to
  dimensions $d = 5, 20, 50$; the horizontal axis is the sample
  size~$n$.  The vertical axis shows the scaled dynamic remainder
  $\sqrt{n}\,\|R_n\|$---the preconditioner-specific quantity whose
  vanishing is the key condition for the CLT (Theorem~\ref{thm:clt};
  see Theorem~\ref{thm:sharp_threshold} for the threshold)---on a log
  scale.  Bands are pointwise 95\% confidence intervals across 50
  replications.}
\label{fig:diagnostics}
\end{figure}

Figure~\ref{fig:diagnostics} shows that $\sqrt{n}\,\|R_n\|$ declines
toward zero for all four methods in every dimension.  At $d = 5$ and
$d = 20$ the curves have largely flattened by $n \approx 10^5$,
indicating that the remainder is no longer the dominant contribution
to the error at these sample sizes.  At $d = 50$ the curves are still
descending visibly across the full sample-size range and separate by
method along the estimation-burden ordering already visible in
Figure~\ref{fig:overview}: SA-ONS carries the largest dynamic
remainder, SA-AdaGrad and SA-RMSProp an intermediate one, and the
identity baseline the smallest.

\begin{table}[tbp]
\centering
\small
\caption{Empirical log--log slope of $\sqrt{n}\,\|R_n\|$ versus~$n$ in
the general regime ($S \neq H$), computed by ordinary least-squares
regression of $\log(\sqrt{n}\,\|R_n\|)$ on $\log n$ over the full
sample-size range, where $\|R_n\|$ is the mean across 50 replications
at each~$n$.  A slope near
$-0.50$ indicates a pre-asymptotic $\|R_n\| \propto n^{-1}$ regime in
which the remainder is still decaying; shallower slopes indicate that
the remainder has largely vanished and the $n^{-1/2}$ martingale term
dominates.}
\label{tab:scaled_slopes}
\begin{tabular}{lccc}
\toprule
\textbf{Method} & $d=5$ & $d=20$ & $d=50$ \\
\midrule
Identity (PR-SGD) & $-0.28$ & $-0.28$ & $-0.32$ \\
SA-AdaGrad        & $-0.27$ & $-0.29$ & $-0.36$ \\
SA-RMSProp        & $-0.31$ & $-0.29$ & $-0.31$ \\
SA-ONS            & $-0.27$ & $-0.30$ & $-0.40$ \\
\bottomrule
\end{tabular}
\end{table}

\noindent Table~\ref{tab:scaled_slopes} makes the visual pattern
quantitative.  At $d = 5$ and $d = 20$ the slopes cluster between
$-0.27$ and $-0.31$, well above the pre-asymptotic $-0.50$ value: the
remainder has shrunk enough that the $n^{-1/2}$ martingale term drives
the trend, consistent with the near-nominal coverage in
Figure~\ref{fig:overview} at these dimensions.  At $d = 50$ the slopes
are steeper ($-0.31$ to $-0.40$) and ordered by the method's
estimation burden---SA-ONS most negative, SA-AdaGrad next, then
Identity and SA-RMSProp---indicating that the remainder is still
decaying over the observed range, which is precisely what the larger
coverage gap and NMSE offset at $d = 50$ in the main figure reflect.

\subsection*{Threshold Violation Experiment Details}

The threshold violation experiment (Figure~\ref{fig:threshold}) uses the same
$d=5$ Toeplitz($0.4$) setup as the main synthetic experiment, with
$\eta_t = 2.0\,t^{-0.7}$ and ridge regularization~$+I$.  The SA-RMSProp
baseline updates the diagonal preconditioner with gain $\rho_t = 1/t$,
which yields stabilization rate $\beta = 1$ under bounded inputs
(Corollary~\ref{cor:sa_stab}); here the covariates are Gaussian and
unbounded, so $\beta = 1$ serves as a heuristic label.
The two constant-EMA
variants replace $\rho_t$ with a fixed $\rho \in \{0.5, 0.999\}$,
which heuristically do not admit a positive stabilization rate in the sense of
Definition~\ref{def:beta} (we do not formally establish this).  We use 100~replications and a log-spaced grid of
sample sizes up to $n = 5 \times 10^6$.

Figure~\ref{fig:threshold_supp} shows coverage and normalized MSE
(the scaled remainder is in Figure~\ref{fig:threshold} of the main text).
At $n = 5 \times 10^6$, all methods achieve coverage near $0.95$ and
$\mathrm{NMSE} \approx 1$, suggesting that the inferential impact of the
threshold violation is moderate at these sample sizes.

\begin{figure}[tbp]
\centering
\includegraphics[width=\textwidth]{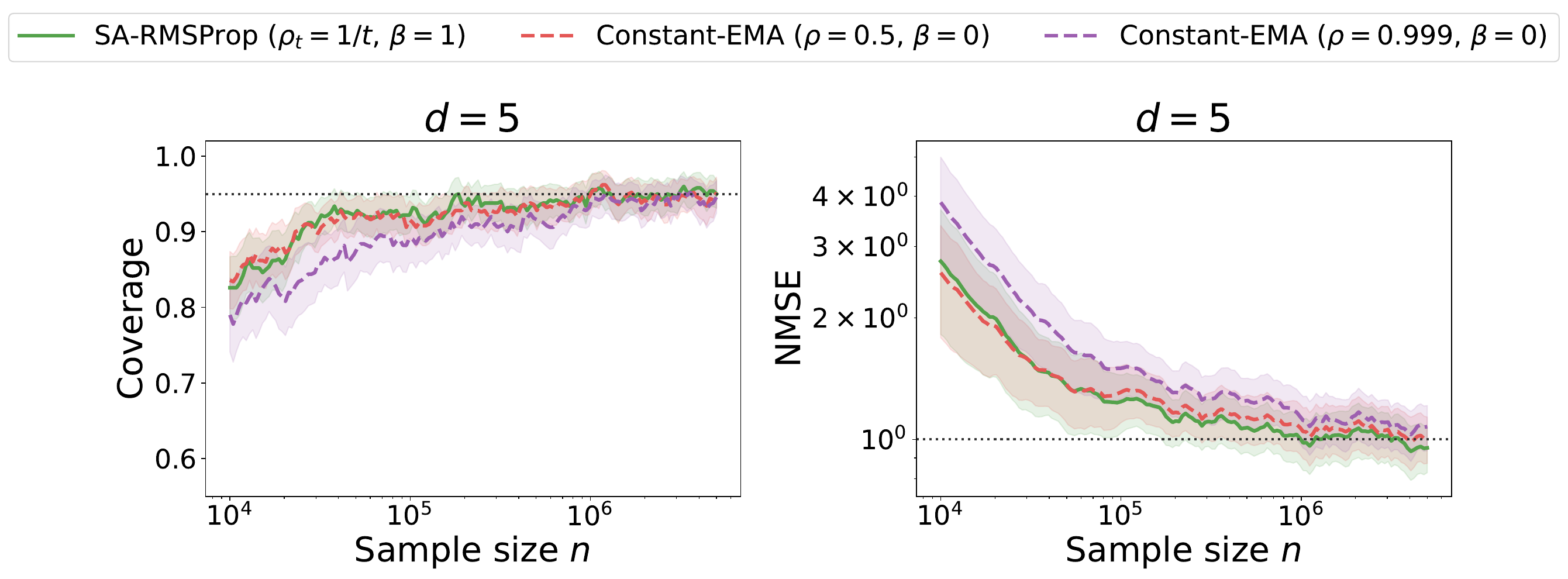}
\caption{Coverage and NMSE for the threshold violation experiment ($d = 5$),
  comparing SA-RMSProp ($\rho_t = 1/t$) against two constant-EMA variants
  ($\rho \in \{0.5, 0.999\}$).
  \textbf{Left:}~coverage (dashed line at the nominal $0.95$ level).
  \textbf{Right:}~NMSE on a log scale (dashed line at the target value~$1$).
  Both panels use a log-scaled horizontal axis for sample size~$n$.
  Bands are pointwise 95\% CIs over 100 replications.
  The scaled remainder panel is in Figure~\ref{fig:threshold}.}
\label{fig:threshold_supp}
\end{figure}

Table~\ref{tab:threshold_slopes} reports the log--log slopes of
$\sqrt{n}\,\|R_n\|$ computed from the second half of the sample-size range.
SA-RMSProp has a clearly negative slope ($-0.21$), consistent with
$\sqrt{n}\,\|R_n\| \to 0$.  The constant-EMA methods have near-zero slopes
($-0.02$ to $-0.03$), consistent with a remainder that does not vanish
over the observed range.  The 3--5$\times$ gap in remainder magnitude
between SA and EMA at $n = 5 \times 10^6$ (SA: $0.21$;
EMA $\rho = 0.5$: $0.72$; EMA $\rho = 0.999$: $1.04$) illustrates
the practical significance of the stabilization-rate threshold.

\begin{table}[tbp]
\centering
\small
\caption{Threshold violation experiment ($d = 5$, 100 replications).
The $\beta$ column lists the stabilization rate from
Definition~\ref{def:beta} (``---'' indicates no positive rate).
The slope is obtained by OLS regression of
$\log(\sqrt{n}\,\|R_n\|)$ on $\log n$ over the second half of the
sample-size range.  A negative slope is consistent with a vanishing
remainder; a near-zero slope suggests the remainder persists.  The last
column gives $\sqrt{n}\,\|R_n\|$ at the largest sample size.}
\label{tab:threshold_slopes}
\begin{tabular}{lccc}
\toprule
\textbf{Method} & $\boldsymbol{\beta}$ & \textbf{Slope} &
  $\sqrt{n}\,\|R_n\|$ at $n{=}5{\times}10^6$ \\
\midrule
SA-RMSProp ($\rho_t = 1/t$) & $1$ & $-0.21$ & $0.21$ \\
Constant-EMA ($\rho = 0.5$) & --- & $-0.02$ & $0.72$ \\
Constant-EMA ($\rho = 0.999$) & --- & $-0.03$ & $1.04$ \\
\bottomrule
\end{tabular}
\end{table}

\end{document}